\documentclass[11pt,a4paper]{article}
\usepackage{graphicx} 

\usepackage[utf8]{inputenc} 
\usepackage[USenglish]{babel} 
\usepackage{graphicx} 
\usepackage{comment} 
\usepackage{amsmath}
\usepackage{amsfonts}
\usepackage{amssymb}\usepackage{amsmath} 
\usepackage{amssymb}
\usepackage[onehalfspacing]{setspace}
\usepackage{mathtools} 
\usepackage{stmaryrd}

\usepackage{amsthm}
\usepackage{abstract} 
\usepackage{float}
\usepackage[mathscr]{euscript}
\usepackage[singlelinecheck=false,justification=RaggedRight,format=plain]{caption}
\usepackage{subcaption}
\usepackage{geometry}
\usepackage{mathabx}
\usepackage{tikz-cd}
\usepackage{xcolor}
\usepackage{array}

\usepackage[hidelinks]{hyperref}

\newcommand{\N}{\mathbb{N}}
\newcommand{\Z}{\mathbb{Z}}
\newcommand{\R}{\mathbb{R}}
\newcommand{\C}{\mathbb{C}}
\newcommand{\X}{\mathfrak{X}}
\DeclareMathOperator{\id}{id}

\newcommand{\del}{\partial}
\newcommand{\dd}{\mathrm{d}}
\newcommand{\ii}{\mathrm{i}}

\DeclareMathOperator{\im}{im}

\DeclareMathOperator{\End}{End}
\DeclareMathOperator{\aut}{Aut}

\DeclareMathOperator{\Ad}{Ad}

\DeclareMathOperator{\Exp}{Exp}
\newcommand{\dbar}{\bar{\partial}}
\newcommand{\A}{\mathcal{A}}

\DeclareMathOperator{\Diff}{Diff}
\DeclareMathOperator{\Hom}{Hom}
\DeclareMathOperator{\coker}{coker}
\DeclareMathOperator{\pr}{pr}

\DeclareMathOperator{\Hproj}{H}
\DeclareMathOperator{\G}{G}
\DeclareMathOperator{\Q}{Q}

\makeatletter
\g@addto@macro\bfseries{\boldmath}
\makeatother

\newtheorem{theorem}{Theorem}[section]
\newtheorem{corollary}[theorem]{Corollary}
\newtheorem{lemma}[theorem]{Lemma}
\newtheorem{proposition}[theorem]{Proposition}

\theoremstyle{definition}
\newtheorem{definition}[theorem]{Definition}
\newtheorem{example}[theorem]{Example}
\newtheorem{rem}[theorem]{Remark}

\addto\captionsUSenglish{}

\title{Deformation theory of complex structures via Hamilton--Nash--Moser theory}
\author{Vicente Cortés and Paula Naomi Pilatus}
\date{}

\begin{document}
\newgeometry{inner=2.50cm, outer=2.50cm, top=2.50cm, bottom=3.50cm}
\maketitle

\begin{abstract}
     We revisit the classical proof by Kuranishi on the existence of a locally complete family of complex deformations on a compact complex manifold using Hamilton's Nash--Moser implicit function theorem. This is relevant to existing and future extensions of this theorem to generalized complex structures on Courant algebroids. 

    \medskip\noindent
{\it MSc classification:} 32G05 (Deformations of complex structures); 58D27 (Moduli problems for differential geometric structures)

\medskip\noindent
{\it Key words:} Kodaira-Spencer theory, Kuranishi family, Nash--Moser theorem 
\end{abstract}

\section{Introduction}

The foundations for the deformation theory of complex structures were established by Kodaira and Spencer (with contributions by Nirenberg) in~\cite{kodaira1958existence,kodaira1958completeness,KSdefIandII,KSdefIII} who were influenced by Fröhlicher and Nijenhuis~\cite{Frohlicher1957}. Building on these works, in~\cite{Kuranishi1962}, Kuranishi proved a general  theorem on the existence of locally complete families of deformations on compact complex manifolds. Subsequently, in~\cite{kuranishi64}, he gave a simplified proof of his theorem.

In~\cite{Gualtieri:2003dx,Gualtieri2011}, Gualtieri followed Kuranishi's proof strategy in~\cite{kuranishi64} to obtain a far reaching extension of Kuranishi's result to \emph{generalized} complex structures on the generalized tangent bundle $\mathbb{T}M=TM\oplus T^*M$ of a compact manifold $M$. \emph{Generalized complex structures} on $\mathbb{T}M$ are defined as endomorphisms on $\mathbb{T}M$ that square to minus the identity and are compatible with some extra data on $\mathbb{T}M$, known as an Courant algebroid structure. They encompass both symplectic and complex structures as special cases.

 In future work, we aim to extend Gualtieri's deformation theorem to generalized complex structures on a larger class of Courant algebroids whose underlying vector bundle can be different from the generalized tangent bundle. In order to achieve this, some analytic details in the classical work by Kuranishi~\cite{kuranishi64} (which are also relevant to \cite{Gualtieri:2003dx,Gualtieri2011}) need to be revisited.

Kuranishi's proof in~\cite{kuranishi64} relies on Sobolev spaces and refers to both the inverse and implicit function theorem on Banach spaces. However, the presentation in~\cite{kuranishi64} omits proofs of the diﬀerentiability of the relevant maps, which is required to argue that the implicit function theorem is indeed applicable. One of the difficulties arising is that the action of diffeomorphisms is not smooth in the Banach sense.

In this work, we will set the foundations for our extension of the deformation theory of generalized complex structures to larger classes of Courant algebroids by revisiting Kuranishi's proof in~\cite{kuranishi64} in the framework of tame Fréchet spaces using the Nash--Moser inverse function theorem as well as Hamilton's Nash--Moser implicit function theorem. This
approach has the advantage of allowing us to remain entirely within the smooth category throughout
the proof, avoiding the need to track diﬀerent Sobolev spaces or invoke additional regularity arguments. Along the way we will furthermore provide the missing analytic details mentioned above.

We would like to point out that Hamilton--Nash--Moser theory has already been applied to questions of complex deformation theory: In~\cite{hamilton1977deformation}, Hamilton used an extension of the Nash--Moser theorem to prove that given a relatively compact open subset $X\subset Y$ (with smooth boundary) of a complex manifold $Y$, under certain assumptions, every deformation of the complex structure on $X$ arises from a small perturbation of $X$ in $Y$. In~\cite{Donaldson2025}, Donaldson and Lehmann used Hamilton--Nash--Moser theory to study certain deformations of Calabi-Yau three-folds with boundary.

This paper is structured as follows: In Section~\ref{sect: preliminaries cx geom}, we set up our notation and recall some results and formulas from complex geometry as well as Hodge theory. In Section~\ref{sect: prelim HNM}, we summarize the the concepts and results from Hamilton--Nash--Moser theory that are relevant to this paper. In Section~\ref{sect: diffeo action on defs}, we study the action of small diffeomorphisms on infinitesimal deformations and show that it is described by a smooth tame map. Moreover, we will give an explicit formula for the derivative of this map, which will be essential in our proof of the local completeness of the Kuranishi family. Finally, in Section~\ref{sect: def thm}, we prove Kuranishi's theorem using the Nash--Moser inverse function theorem as well as Hamilton's Nash--Moser implicit function theorem.

\subsection*{Acknowledgements}
The authors have received funding from the Deutsche Forschungsgemeinschaft 
(DFG, German Research Foundation) under Germany's Excellence Strategy, EXC 2121 ``Quantum Universe,'' 390833306 and under -- SFB-Gesch\"aftszeichen 1624 -- Projektnummer 506632645. We thank Jan Heck, Dominik Gutwein and Markus Röser for valuable comments on the paper. Our presentation has benefitted from the fresh point of view of Gualtieri in~\cite{Gualtieri:2003dx,Gualtieri2011} on Kuranishi's proof~\cite{kuranishi64}.

\section{Preliminaries on complex geometry}
\label{sect: preliminaries cx geom}

Given an almost complex manifold $(M,J)$, we denote the $\pm \ii$-eigen-bundles of $J$ by $T^{1,0}(M,J)$ and $T^{0,1}(M,J)$, respectively, and the $(p,q)$-bundles by $\Lambda^{p,q}(M,J)$. Moreover, for some open subset $U\subset M$, we denote the smooth $(p,q)$-forms on $U$ with respect to $J$ by $\mathcal{A}^{p,q}_{M,J}(U)$. Similarly, if $(M,J)$ is a complex manifold, given a holomorphic vector bundle $E\to M$, we denote the space of $(p,q)$-forms with values in $E$ by $\A^{p,q}_{M,J}(U,E)$. 

In the following and later on in the text, we will often fix a manifold $M$ and consider the gradings with respect to a fixed complex structure $J$. We will then often omit writing $M$ and $J$ and instead denote e.g.\ $T^{1,0}(M,J)=T^{1,0}$, $\Lambda^{p,q}(M,J)=\Lambda^{p,q}$, $\A^{p,q}_{M,J}=\A^{p,q}$ etc.

\subsection{Forms with values in the holomorphic tangent bundle}
\label{sect: forms with values in holom}
In the following, we fix a complex manifold $(M,J)$ and consider $(p,q)$-forms with values in the holomorphic tangent bundle.

We will denote the Dolbeault operator of $T^{1,0}$ by $\dbar$. Furthermore, we define a bracket
 \begin{align*}
     [\cdot,\cdot]\colon \A^{0,p}(T^{1,0})\times \A^{0,q}(T^{1,0})\longrightarrow \A^{0,p+q}(T^{1,0})
 \end{align*}
by applying the wedge product on the form parts and the complexified Lie bracket on the $T^{1,0}$-parts. More precisely, in local coordinates, we may write $\alpha=\dd \bar{z}^I\otimes \alpha_I^{\;j}\partial_j$, $\beta=\dd \bar{z}^K\otimes \beta_K^{\;l}\partial_l$, where $|I|=p$, $|K|=q$, and obtain
\begin{align*}
    [ \alpha,\beta ]:=\dd \bar{z}^I\wedge \dd \bar{z}^K\otimes [\alpha_I^{\;j}\partial_j, \beta_K^{\;l}\partial_l],
\end{align*}
where we used multi-index notation and the Einstein convention.
The bracket can be defined more generally on $(p,q)$-forms. The following lemma lists some properties of the bracket and Dolbeault differential:

\begin{lemma}
\label{lem: properties dbar and bracket}
    Let $U\subset M$, $\varphi\in \mathcal{A}^{0,p}(U,T^{1,0})$, $\psi\in \mathcal{A}^{0,q}(U,T^{1,0})$, $\eta\in \A^{0,r}(U,T^{1,0})$. Then
    \begin{enumerate}
        \item $[\varphi,\psi]=(-1)^{pq+1}[\psi,\varphi]$;
\item $\dbar [\varphi,\psi]=[\dbar \varphi,\psi]+(-1)^p[\varphi,\dbar\psi]$;
        \item $(-1)^{pr}\left[\varphi,[\psi,\eta]\right]+(-1)^{pq}\left[\psi,[\eta,\varphi]\right]+(-1)^{qr}\left[\eta,[\varphi,\psi]\right]=0$.
\end{enumerate}
\end{lemma}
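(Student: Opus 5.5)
The plan is to work entirely in local holomorphic coordinates $z^1,\dots,z^n$ on (a chart in) $U$. All three identities are local and $\mathbb{C}$-bilinear or trilinear, so it suffices to verify them on decomposable elements $\varphi=\dd\bar z^I\otimes X$, $\psi=\dd\bar z^K\otimes Y$, $\eta=\dd\bar z^L\otimes Z$. Here $|I|=p$, $|K|=q$, $|L|=r$, and $X=X^j\partial_j$, $Y$, $Z$ are smooth sections of $T^{1,0}$ over the chart. In this notation the definition reads $[\varphi,\psi]=\dd\bar z^I\wedge\dd\bar z^K\otimes[X,Y]$. One point must be checked first: the complexified Lie bracket of two $(1,0)$-vector fields with arbitrary smooth coefficients is again of type $(1,0)$. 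Indeed, $[X^j\partial_j,Y^l\partial_l]=(X^j\partial_jY^l-Y^j\partial_jX^l)\partial_l$, since $[\partial_j,\partial_l]=0$. This also shows that the definition is independent of coordinates, because the wedge product and the Lie bracket are both intrinsic.

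For (1), swap the form factors, which gives $\dd\bar z^K\wedge\dd\bar z^I=(-1)^{pq}\dd\bar z^I\wedge\dd\bar z^K$, and use $[Y,X]=-[X,Y]$. Together these produce the sign $(-1)^{pq+1}$.

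For (3), expand $[\varphi,[\psi,\eta]]=\dd\bar z^I\wedge\dd\bar z^K\wedge\dd\bar z^L\otimes[X,[Y,Z]]$. For the other two terms, reorder the form parts into the order $I,K,L$. Moving $\dd\bar z^I$ past $\dd\bar z^K\wedge\dd\bar z^L$ in $[\psi,[\eta,\varphi]]$ contributes $(-1)^{p(q+r)}$. Moving $\dd\bar z^I\wedge\dd\bar z^K$ past $\dd\bar z^L$ in $[\eta,[\varphi,\psi]]$ contributes $(-1)^{r(p+q)}$. Multiplying by the prefactors $(-1)^{pr}$, $(-1)^{pq}$, $(-1)^{qr}$, every term acquires the common sign $(-1)^{pr}$. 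The sum therefore reduces to $(-1)^{pr}\,\dd\bar z^I\wedge\dd\bar z^K\wedge\dd\bar z^L\otimes\big([X,[Y,Z]]+[Y,[Z,X]]+[Z,[X,Y]]\big)$, which vanishes by the ordinary Jacobi identity for complex vector fields.

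For (2), recall that in holomorphic coordinates the Dolbeault operator of $T^{1,0}$ acts only on coefficients: $\dbar(\dd\bar z^I\otimes X^j\partial_j)=\dd\bar z^m\wedge\dd\bar z^I\otimes(\partial_{\bar m}X^j)\partial_j$, because the frame $\partial_j$ is holomorphic. Introduce the operator $L_m:=\partial_{\bar m}$ acting componentwise on vector fields, $L_m(X^j\partial_j)=(\partial_{\bar m}X^j)\partial_j$. The key observation is that $L_m$ is a derivation of the bracket, $L_m[X,Y]=[L_mX,Y]+[X,L_mY]$. This holds because $\partial_{\bar m}$ commutes with $\partial_j$, or equivalently $[\partial_{\bar m},\partial_j]=0$, so one can differentiate $X^j\partial_jY^l-Y^j\partial_jX^l$ by the Leibniz rule. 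Then
\[
\dbar[\varphi,\psi]=\dd\bar z^m\wedge\dd\bar z^I\wedge\dd\bar z^K\otimes\big([L_mX,Y]+[X,L_mY]\big).
\]
The first summand is exactly $[\dbar\varphi,\psi]$. For the second, moving $\dd\bar z^m$ past $\dd\bar z^I$ gives $(-1)^p\,\dd\bar z^I\wedge\dd\bar z^m\wedge\dd\bar z^K\otimes[X,L_mY]=(-1)^p[\varphi,\dbar\psi]$, which completes the proof of (2).

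The only real obstacle is bookkeeping: fixing the conventions (the $\dd\bar z$ factor of $\dbar$ goes on the left) and tracking the Koszul signs in (3). The derivation property in (2) is where holomorphicity of the coordinate frame is genuinely used.
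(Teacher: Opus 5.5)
Your proof is correct and is the direct local-coordinate verification the paper has in mind: the paper states this lemma without proof as a standard fact and says the next lemma follows from the local expressions, and your sign bookkeeping in (1) and (3) and the derivation property of $\partial_{\bar m}$ in (2) all check out. One minor point concerns a side remark. Coordinate independence of the bracket does not follow just from the wedge product and Lie bracket being intrinsic, because the bracket is not $C^\infty$-bilinear ($[\dd\bar z^I\otimes fX,\dd\bar z^K\otimes Y]$ picks up the extra term $-\dd\bar z^I\wedge\dd\bar z^K\otimes Y(f)X$). It holds instead because the coefficients relating the $\dd\bar z^I$ in two holomorphic charts are antiholomorphic and hence annihilated by $(1,0)$-fields; in any case this is not needed for the lemma itself.
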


We will furthermore need the following explicit formulas for the brackets and Dolbeault derivatives of $0$- and $(0,1)$-forms with values in $T^{1,0}$, which can be directly computed using the local expressions.
\begin{lemma}
\label{lem: formulas dbar and bracket for 0 and 0,1}
    Let $\omega\in\A^{0,1}\left(M,T^{1,0}\right)$, $\xi\in \A^{0}\left(M,T^{1,0}\right)$, $X,Y\in \Gamma\left(M,T^{0,1}\right)$. Then
    \begin{enumerate}
        \item $\dbar\xi(X)=\pr_{1,0}[X,\xi]$;
        \item $\dbar\omega(X,Y)=\pr_{1,0}[X,\omega(Y)]-\pr_{1,0}[Y,\omega(X)]-\omega([X,Y])$;
        \item $[\omega,\xi](X)=[\omega(X),\xi]-\omega\left(\pr_{0,1}[X,\xi]\right)$;
        \item $[\omega,\omega](X,Y)=2[\omega(X),\omega(Y)]-2\omega\left(\pr_{0,1}[\omega(X),Y]\right)-2\omega\left(\pr_{0,1}[X,\omega(Y)]\right)$.
    \end{enumerate}
\end{lemma}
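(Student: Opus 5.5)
All four identities are tensorial in $X,Y$: both sides are $C^\infty$-linear in each argument. So it suffices to verify them in holomorphic coordinates $z^1,\dots,z^n$ on an open set, taking $X=\bar\partial_a:=\partial/\partial\bar z^a$ and $Y=\bar\partial_b$. In these coordinates we write $\xi=\xi^j\partial_j$ and $\omega=\dd\bar z^c\otimes\omega_c^{\;j}\partial_j$.

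The basic facts I will use are:
\begin{itemize}
\item coordinate vector fields commute, so $[\bar\partial_a,\bar\partial_b]=0$;
\item the complexified Lie bracket of $\bar\partial_a$ with $f\partial_j$ is $(\bar\partial_a f)\partial_j$, which is of type $(1,0)$;
\item the Dolbeault operator of $T^{1,0}$ acts only on coefficient functions, since the frame $\partial_j$ is holomorphic.
\end{itemize}

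For (1), $\dbar\xi=\dd\bar z^a\otimes(\bar\partial_a\xi^j)\partial_j$. Evaluating at $\bar\partial_a$ gives $\bar\partial_a\xi^j\,\partial_j=[\bar\partial_a,\xi]$, and this already lies in $T^{1,0}$.

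For (2), $\dbar\omega=\dd\bar z^a\wedge\dd\bar z^c\otimes(\bar\partial_a\omega_c^{\;j})\partial_j$. Evaluating at $(\bar\partial_a,\bar\partial_b)$ gives $\bar\partial_a\omega_b^{\;j}\partial_j-\bar\partial_b\omega_a^{\;j}\partial_j$. This equals $[\bar\partial_a,\omega(\bar\partial_b)]-[\bar\partial_b,\omega(\bar\partial_a)]$, and the term $\omega([X,Y])$ vanishes. For general $X,Y$, tensoriality of the left side and the standard invariant-formula argument ensure that the right side is also $C^\infty$-bilinear. Indeed, the non-tensorial terms $(Xf)\omega(Y)$ cancel against the term $\omega([X,fY])$, and the projections $\pr_{1,0}$ remove the $T^{0,1}$-components of $[X,\omega(Y)]$ that arise for non-coordinate $X$.

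For (3), the bracket is defined on component level, so $[\omega,\xi]=\dd\bar z^c\otimes[\omega_c^{\;j}\partial_j,\xi]$ and hence $[\omega,\xi](\bar\partial_a)=[\omega(\bar\partial_a),\xi]$. On the right-hand side, $[\bar\partial_a,\xi]$ is of type $(1,0)$ in coordinates, so $\pr_{0,1}[\bar\partial_a,\xi]=0$ and the formula matches. I still need to check that the right side is tensorial in $X$. Replacing $X$ by $fX$ produces the extra term $-\xi(f)\,\omega(X)$ from the first summand, since $[f\omega(X),\xi]=f[\omega(X),\xi]-\xi(f)\omega(X)$. The second summand produces $+\omega(\pr_{0,1}(\xi(f)X))=\xi(f)\omega(X)$, so the two cancel.

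For (4), the definition gives $[\omega,\omega]=\dd\bar z^a\wedge\dd\bar z^b\otimes[\omega_a,\omega_b]$, where $\omega_a=\omega_a^{\;j}\partial_j$. Evaluating at $(\bar\partial_a,\bar\partial_b)$ gives $[\omega_a,\omega_b]-[\omega_b,\omega_a]=2[\omega(\bar\partial_a),\omega(\bar\partial_b)]$. The correction terms vanish in coordinates: $[\omega_a,\bar\partial_b]=-(\bar\partial_b\omega_a^{\;j})\partial_j$ is of type $(1,0)$, so its $(0,1)$-projection is zero, and similarly for the other term.

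Tensoriality of the right side is the main obstacle, namely keeping track of signs. Replacing $Y$ by $fY$ gives three extra terms:
\begin{itemize}
\item from the first summand, $2\,\omega(X)(f)\,\omega(Y)$;
\item from the second summand, $-2\,\omega(\omega(X)(f)Y)=-2\,\omega(X)(f)\,\omega(Y)$;
\item from the third summand, $-2\,\omega(\pr_{0,1}(X(f)\,\omega(Y)))$, which is $0$ because $\omega(Y)$ is of type $(1,0)$.
\end{itemize}
The first two cancel and the third vanishes. By the antisymmetry of the right side in $X$ and $Y$, it is then tensorial in both arguments. Hence the coordinate check suffices for (4), and likewise for (1)–(3), which completes the plan.
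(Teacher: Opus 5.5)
Your proof is correct and takes essentially the same route as the paper, which only remarks that the four identities follow by direct computation in local holomorphic coordinates; that is exactly what you carry out. Your explicit tensoriality checks (the cancellation of the $\xi(f)\,\omega(X)$ terms in (3), of the $\omega(X)(f)\,\omega(Y)$ terms in (4), and the antisymmetry argument) supply the justification, left implicit in the paper, for testing only on the coordinate fields $\bar\partial_a,\bar\partial_b$.
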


\subsection{Deformations of complex structures}
Let now $(M,J)$ be a compact complex manifold. 

\begin{definition}
An almost complex structure $J'$ on $M$ has \emph{finite distance} to $J$ if there exists some $\omega_{J'}\in \mathcal{A}^{0,1}(M,T^{1,0})$ such that for every $p\in M$
\begin{align*}
    T^{0,1}(J')_p=\left\{-\omega_{J'}(X_p)+X_p : X_p\in T_p^{0,1}\right\}.  
\end{align*}
\end{definition}

Conversely, given $\omega\in \A^{0,1}(M,T^{1,0})$, we can define an almost complex structure $J_{\omega}$ on $M$ with finite distance to $J$ if the subbundle 
$$
    L_{\omega}=\left\{-\omega(X)+X: X\in T^{0,1}\right\}\subset T_{\C}  
$$
satisfies the condition
\begin{align}
\label{eq: condition almost complex}
(L_{\omega})_p\cap\overline{(L_{\omega})_p}=\{0\}\quad \forall p\in M   . 
\end{align}
Then $\omega$ defines an almost complex structure on $M$ via $T^{0,1}(J_{\omega}):=L_{\omega}$. For small enough $\omega\in \A^{0,1}(M,T^{1,0})$ the condition \eqref{eq: condition almost complex} is always satisfied. 
\begin{definition}
    We will call a section $\omega\in\A^{0,1}(M,T^{1,0})$ satisfying \eqref{eq: condition almost complex} an \emph{almost complex deformation} of the complex structure $J$.
\end{definition}

It was shown in \cite{kodaira1958existence} that the condition of the deformed structure to be integrable translates into a partial differential equation:

\begin{theorem}[Equation (3') in \cite{kodaira1958existence}]
    The almost complex structure $J_{\omega}$ obtained from a complex structure $J$ and an almost complex deformation $\omega\in \A^{0,1}(M,T^{1,0})$ is integrable if and only if
    \begin{align}
    \label{eq: Maurer-Cartan}
        \dbar \omega -\tfrac{1}{2}[\omega,\omega]=0.
    \end{align}
\end{theorem}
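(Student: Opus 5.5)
The plan is to use the Newlander--Nirenberg theorem, so that integrability of $J_\omega$ becomes involutivity of $L_\omega=T^{0,1}(J_\omega)$ under the complexified Lie bracket. By definition $L_\omega$ is spanned pointwise by the sections $X-\omega(X)$ with $X\in\Gamma(T^{0,1})$. So it suffices to show the following: for all $X,Y\in\Gamma(T^{0,1})$, the bracket $[X-\omega(X),Y-\omega(Y)]$ lies in $L_\omega$ if and only if $\left(\dbar\omega-\tfrac12[\omega,\omega]\right)(X,Y)=0$. The statement is local and tensorial, so it is enough to check it on sections.

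First I would characterize membership in $L_\omega$. A complex vector field $Z$ lies in $L_\omega$ exactly when $Z=-\omega(X')+X'$ for some $X'\in T^{0,1}$. Since $T_\C=T^{1,0}\oplus T^{0,1}$, this forces $X'=\pr_{0,1}Z$. Hence $Z\in L_\omega$ if and only if
\begin{align*}
\pr_{1,0}Z+\omega(\pr_{0,1}Z)=0 .
\end{align*}
Next I would expand the bracket:
\begin{align*}
Z:=[X-\omega(X),Y-\omega(Y)]=[X,Y]-[X,\omega(Y)]-[\omega(X),Y]+[\omega(X),\omega(Y)].
\end{align*}
The needed type information is as follows. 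Since $J$ is integrable, $[X,Y]\in T^{0,1}$ and $[\omega(X),\omega(Y)]\in T^{1,0}$. The mixed brackets $[X,\omega(Y)]$ and $[\omega(X),Y]$ have components in both summands.

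Then I compute $\pr_{1,0}Z+\omega(\pr_{0,1}Z)$ term by term:
\begin{align*}
&-\pr_{1,0}[X,\omega(Y)]+\pr_{1,0}[Y,\omega(X)]+[\omega(X),\omega(Y)]\\
&\quad+\omega([X,Y])-\omega\left(\pr_{0,1}[X,\omega(Y)]\right)-\omega\left(\pr_{0,1}[\omega(X),Y]\right).
\end{align*}
I would compare this with items 2 and 4 of Lemma~\ref{lem: formulas dbar and bracket for 0 and 0,1}. The first three $\pr_{1,0}$ and $\omega([X,Y])$ terms together give $-\dbar\omega(X,Y)$. The remaining three terms give exactly $\tfrac12[\omega,\omega](X,Y)$. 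The whole expression therefore equals $-\left(\dbar\omega-\tfrac12[\omega,\omega]\right)(X,Y)$, and it vanishes for all $X,Y$ precisely when the Maurer--Cartan equation \eqref{eq: Maurer-Cartan} holds.

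Finally, I conclude with Newlander--Nirenberg: $J_\omega$ is integrable if and only if $L_\omega$ is involutive. Involutivity on spanning sections suffices, since $[fA,gB]$ differs from $fg[A,B]$ only by multiples of $A$ and $B$.

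The main obstacle is sign and factor bookkeeping. The sign convention $-\omega(X)+X$ must match the formulas in Lemma~\ref{lem: formulas dbar and bracket for 0 and 0,1}, especially the factor $2$ in item 4, since this is what produces the coefficient $\tfrac12$. I would double-check it in local coordinates for $\omega=\omega_{\bar k}^{\;j}\dd\bar z^k\otimes\partial_j$. Invoking Newlander--Nirenberg is the deep input, but it is standard.
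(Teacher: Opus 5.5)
Your argument is correct. The paper contains no proof of its own for this statement; it only quotes it as equation (3') of \cite{kodaira1958existence}. What you give is the standard derivation: Newlander--Nirenberg turns integrability of $J_\omega$ into involutivity of $L_\omega$, and the failure of involutivity is identified with the Maurer--Cartan expression. The classical source does this in local coordinates, whereas you do it invariantly using items 2 and 4 of Lemma~\ref{lem: formulas dbar and bracket for 0 and 0,1}.

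The bookkeeping is right. For $Z=[X-\omega(X),Y-\omega(Y)]$, the two $\pr_{1,0}$ terms together with $\omega([X,Y])$ are exactly $-\dbar\omega(X,Y)$. Your phrase ``the first three $\pr_{1,0}$ and $\omega([X,Y])$ terms'' should read ``the two $\pr_{1,0}$ terms and $\omega([X,Y])$''. The remaining terms, $[\omega(X),\omega(Y)]-\omega(\pr_{0,1}[X,\omega(Y)])-\omega(\pr_{0,1}[\omega(X),Y])$, are exactly $\tfrac12[\omega,\omega](X,Y)$.

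The coordinate check you propose also goes through. Write $\omega=\dd\bar z^k\otimes\omega_k$, so that $\omega(X)=X^{\bar k}\omega_k$. The non-tensorial terms $X^{\bar k}\omega_k(Y^{\bar l})\omega_l-Y^{\bar l}\omega_l(X^{\bar k})\omega_k$ of $[\omega(X),\omega(Y)]$ are cancelled exactly by the two $\omega(\pr_{0,1}[\cdot,\cdot])$ terms. This leaves $X^{\bar k}Y^{\bar l}[\omega_k,\omega_l]=\tfrac12[\omega,\omega](X,Y)$.

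Your route makes the paper self-contained modulo Newlander--Nirenberg and Lemma~\ref{lem: formulas dbar and bracket for 0 and 0,1}. It also shows that the convention $L_\omega=\{X-\omega(X)\}$ is what produces the coefficient $-\tfrac12$; with $X+\omega(X)$ one would get $\dbar\omega+\tfrac12[\omega,\omega]$.

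Two minor remarks:
\begin{enumerate}
\item Since $X\mapsto X-\omega(X)$ is a bundle isomorphism $T^{0,1}\to L_\omega$, the sections $X-\omega(X)$ are all the sections of $L_\omega$, so your final spanning argument is unnecessary.
\item Newlander--Nirenberg is needed only for ``involutive $\Rightarrow$ integrable''. It is not needed at all if integrability is taken to mean vanishing of the Nijenhuis tensor.
\end{enumerate}
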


\begin{definition}
    An almost complex deformation $\omega$ is called a \emph{complex deformation} if it satisfies equation~\eqref{eq: Maurer-Cartan}.
\end{definition}

\begin{definition}
    A \emph{(smooth) family of almost complex deformations} of a complex structure $J$ is a smooth map 
    \begin{align*}
        \omega\colon S\longrightarrow \A^{0,1}(M,T^{1,0})
    \end{align*}
    for some open neighborhood $S$ of $0$ in some finite dimensional vector space such that $\omega(0)=0$ and such that for every $s\in S$, the form $\omega(s)$ satisfies condition \eqref{eq: condition almost complex}. Smoothness is defined here as smoothness of $\omega$ considered as a map $M\times S \to \Lambda^{0,1}T^*\otimes T^{1,0}$. The family $\{\omega(s):s\in S\}$ is a \emph{(smooth) family of complex deformations} if in addition for every $s\in S$, the form $\omega(s)$ satisfies equation~\eqref{eq: Maurer-Cartan}.
\end{definition}

The group of diffeomorphisms acts on the space of almost complex structures. We will see in Section~\ref{sect: diffeo action on defs} that for a diffeomorphism $f$ with small enough one-jet, its action on an almost complex structure $J_{\omega}$ with finite distance from $J$ has again finite distance from $J$ and we will denote the corresponding almost complex deformation by $f\cdot \omega$. We will consider two (almost) complex deformations related in this way by a diffeomorphism as equivalent.

\begin{definition}
    We say that a family of complex deformations $\omega'\colon S'\longrightarrow \A^{0,1}(M,T^{1,0})$ is \emph{obtainable from} the family of complex deformations $\omega\colon S\longrightarrow \A^{0,1}(M,T^{1,0})$
   if there is a smooth map $\tau\colon S'\rightarrow S$ such that $\tau(0)=0$ and a family of diffeomorphisms $\{f_{s'}\}_{s'\in S'}$ of $M$ depending smoothly on $s'$, with $f_0=\id_M$ such that for every $s'\in S'$
    \begin{align*}
        \omega(\tau(s'))=f_{s'}\cdot\omega'(s').
    \end{align*}
\end{definition}
\begin{definition}
    A family of complex deformations $\{\omega(s):s\in S\}$ is called \emph{locally complete} (or \emph{versal} \cite{arnold}) if for every family of complex deformations the restriction to some neighborhood of $0$ is obtainable from $\{\omega(s):s\in S\}$.
\end{definition}

\begin{rem}
In \cite{kuranishi64}, Kuranishi instead defined the notion of \emph{complex analytic families of deformations}, where $S$ is allowed to be an analytic set in some open neighborhood in a finite dimensional complex vector space and the family $\omega(s)$ should depend holomorphically on $s\in S$ in an appropriate sense. 
\end{rem}

\subsection{Hodge theory for compact Hermitian manifolds}
\label{sect: hodge thy}

Let now $E$ be a holomorphic vector bundle over a compact complex manifold $(M,J)$. We fix a hermitian metric on $M$ and a hermitian bundle metric on $E$. 
These induce hermitian bundle metrics $\langle\cdot,\cdot\rangle$ on $\Lambda^{p,q}\otimes E$. We define a hermitian product on the spaces $\A^{p,q}(M,E)$, via
 \begin{align*}
    (\alpha,\beta):=\int_M \langle\alpha,\beta\rangle\,\mathrm{vol},
\end{align*}  
where $\mathrm{vol}$ denotes the metric volume form.

Let $\dbar^*$ be the formal adjoint of the Dolbeault operator $\dbar$ with respect to $(\cdot,\cdot)$. We denote by
\begin{align*}
        \Delta:=\Delta_{\dbar}=\dbar\dbar^*+\dbar^*\dbar
    \end{align*}
the Laplace operator and by
    \begin{align*}
    \mathcal{H}^{k}(M,E) &:= \{\alpha\in \A^{k}(M,E):\dbar\alpha=0=\dbar^*\alpha\}=\{\alpha\in\A^k(M,E):\Delta\alpha=0\}\\
    \mathcal{H}^{p,q}(M,E) &:= \{\alpha\in \A^{p,q}(M,E):\dbar\alpha=0=\dbar^*\alpha\}=\{\alpha\in\A^{p,q}(M,E):\Delta\alpha=0\}.
\end{align*}
the spaces of harmonic forms.

\begin{theorem}[Hodge decomposition]
There exists a natural orthogonal decomposition
\begin{align*}
    \A^{p,q}(M,E)=\dbar \A^{p,q-1}(M,E)\oplus \mathcal{H}^{p,q}(M,E)\oplus \dbar^*\A^{p,q+1}(M,E).
\end{align*}
Moreover, the canonical projection $\mathcal{H}^{p,q}(M,E)\rightarrow H^{p,q}(M,E)$ onto the Dolbeault cohomology is an isomorphism.
\end{theorem}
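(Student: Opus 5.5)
The Hodge decomposition will come from the standard elliptic theory of the Laplacian $\Delta=\dbar\dbar^*+\dbar^*\dbar$ acting on $\A^{p,q}(M,E)$. The first step is to check that $\Delta$ is an elliptic, formally self-adjoint, second-order operator. Self-adjointness is immediate from the definition of $\dbar^*$. For ellipticity, I would compute the principal symbol: for $\xi\in T^*_xM$ real and nonzero, $\sigma(\dbar)(\xi)$ is (up to a constant) exterior multiplication by $\xi^{0,1}$. Its adjoint is contraction, so $\sigma(\Delta)(\xi)=c\,|\xi^{0,1}|^2\,\id$, which is invertible. The coefficients of $\dbar^*$ depend on the chosen hermitian metrics, but only lower-order terms are affected, so the symbol computation stands.

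Next I would invoke the fundamental theorem for elliptic self-adjoint operators on compact manifolds, proved via Gårding's inequality, Rellich compactness and elliptic regularity on Sobolev spaces. It gives that $\ker\Delta=\mathcal{H}^{p,q}(M,E)$ is finite dimensional and consists of smooth forms. It also gives the orthogonal decomposition $\A^{p,q}=\mathcal{H}^{p,q}\oplus\Delta\A^{p,q}$, together with a Green operator $G$ satisfying $\id=\Hproj+\Delta G$, where $\Hproj$ is the orthogonal projection onto harmonic forms. This analytic input is the main obstacle. It cannot be derived from earlier material in the paper, so I would cite it, e.g.\ Wells or Griffiths--Harris, rather than reprove elliptic regularity.

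The remaining steps are algebra. The identity $(\Delta\alpha,\alpha)=\|\dbar\alpha\|^2+\|\dbar^*\alpha\|^2$ shows that $\Delta\alpha=0$ is equivalent to $\dbar\alpha=0=\dbar^*\alpha$, which justifies the two descriptions of $\mathcal{H}^{p,q}$. Writing $\Delta G\alpha=\dbar(\dbar^*G\alpha)+\dbar^*(\dbar G\alpha)$ shows that $\A^{p,q}=\dbar\A^{p,q-1}+\mathcal{H}^{p,q}+\dbar^*\A^{p,q+1}$. The three summands are mutually orthogonal:
\begin{itemize}
  \item $(\dbar\beta,\dbar^*\gamma)=(\dbar^2\beta,\gamma)=0$;
  \item harmonic forms are orthogonal to $\dbar\A$ because $\dbar^*h=0$;
  \item harmonic forms are orthogonal to $\dbar^*\A$ because $\dbar h=0$.
\end{itemize}
Naturality here means that the decomposition is canonically determined by the chosen metrics.

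For the isomorphism with cohomology, I first note that $\ker\dbar=\dbar\A^{p,q-1}\oplus\mathcal{H}^{p,q}$. If $\alpha=\dbar\beta+h+\dbar^*\gamma$ satisfies $\dbar\alpha=0$, then $\dbar\dbar^*\gamma=0$. Hence $\|\dbar^*\gamma\|^2=(\gamma,\dbar\dbar^*\gamma)=0$, so $\dbar^*\gamma=0$. It follows that $H^{p,q}(M,E)=\ker\dbar/\im\dbar\cong\mathcal{H}^{p,q}$. Harmonic forms are $\dbar$-closed, so the map $\mathcal{H}^{p,q}\to H^{p,q}$ is well defined, and the decomposition shows it is bijective.
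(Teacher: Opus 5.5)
Your proposal is correct and is exactly the standard argument (ellipticity of $\Delta$, the elliptic fundamental theorem giving $\Hproj$ and $\G$, then the algebraic splitting via $\Delta\G\alpha=\dbar(\dbar^*\G\alpha)+\dbar^*(\dbar\G\alpha)$ and $\ker\dbar=\dbar\A^{p,q-1}\oplus\mathcal{H}^{p,q}$). The paper gives no proof of its own and takes the theorem as standard background with a pointer to Wells, whose proof is the one you outline; one small wording fix: the principal symbol of $\dbar^*$ does depend on the metric (it is the pointwise adjoint of $\sigma(\dbar)$, i.e.\ metric contraction), and it is only the derivatives of the metric that are confined to lower-order terms, which is what your symbol computation actually uses.
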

We denote by $\Hproj\colon \A^{\bullet}\rightarrow \mathcal{H}^{\bullet}$ the orthogonal projection and by $\G\colon\A^{\bullet}\rightarrow\A^{\bullet}$ the Green's operator satisfying 
    \begin{align*}
        \im \G=\ker\Hproj,\quad\ker \G=\im \Hproj,\quad \Hproj+\G\Delta=\id.
    \end{align*}
Furthermore, we define the operator $\Q:=\dbar^*\G$.
We will frequently use the following standard identities (see e.g.~\cite{Wells2007-er}).
\begin{proposition}
\label{prop: H G and Q}
    The operators $\Hproj$, $\G$ and $\Q$ satisfy 
    \begin{enumerate}
        \item $\G\dbar=\dbar \G$, $\G\dbar^*=\dbar^*\G$;
        \item $\Hproj+\dbar \Q+\Q\dbar=\id$;
        \item $\Q^2=\dbar^*\Q=\Q\dbar^*=\Hproj \Q=\Q\Hproj=0$.
    \end{enumerate}
\end{proposition}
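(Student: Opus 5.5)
The identities will be derived from three ingredients: the defining relations $\im\G=\ker\Hproj$, $\ker\G=\im\Hproj$ and $\Hproj+\G\Delta=\id$; the fact that $\G$ is zero on $\mathcal{H}$ and inverts $\Delta$ on $\mathcal{H}^\perp$; and the Hodge decomposition. The basic observation is that every operator built from $\dbar,\dbar^*$ preserves or kills the relevant summands. Since $\dbar^2=0$ and $(\dbar^*)^2=0$, we have $\dbar\Delta=\dbar\dbar^*\dbar=\Delta\dbar$ and likewise $\dbar^*\Delta=\Delta\dbar^*$. Also $\dbar$ and $\dbar^*$ annihilate harmonic forms, and $\Hproj\dbar=0=\Hproj\dbar^*$, because $\im\dbar$ and $\im\dbar^*$ are orthogonal to $\mathcal{H}$ (for instance $(\dbar\alpha,h)=(\alpha,\dbar^*h)=0$).

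For item 1, fix $\alpha$ and compare $\G\dbar\alpha$ with $\dbar\G\alpha$. Both lie in $\ker\Hproj$: the first because $\im\G=\ker\Hproj$, the second because $\Hproj\dbar=0$. Apply $\Delta$ to both. Using $\Delta\G=\id-\Hproj$, which holds since $\G\Delta=\Delta\G$ as $\G$ is built from $\Delta$, we get $\Delta\G\dbar\alpha=\dbar\alpha-\Hproj\dbar\alpha=\dbar\alpha$. On the other side, $\Delta\dbar\G\alpha=\dbar\Delta\G\alpha=\dbar(\alpha-\Hproj\alpha)=\dbar\alpha$. The two forms therefore differ by an element of $\ker\Delta\cap\ker\Hproj=\{0\}$. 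The same argument works with $\dbar^*$ in place of $\dbar$.

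For item 2, start from $\id=\Hproj+\Delta\G=\Hproj+\dbar\dbar^*\G+\dbar^*\dbar\G$ and use item 1 to write $\dbar^*\dbar\G=\dbar^*\G\dbar=\Q\dbar$. This gives $\id=\Hproj+\dbar\Q+\Q\dbar$.

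For item 3:
\begin{itemize}
\item $\Q^2=\dbar^*\G\dbar^*\G=\dbar^*\dbar^*\G\G=0$, by item 1 and $(\dbar^*)^2=0$.
\item $\dbar^*\Q=(\dbar^*)^2\G=0$.
\item $\Q\dbar^*=\dbar^*\G\dbar^*=(\dbar^*)^2\G=0$.
\item $\Hproj\Q=\Hproj\dbar^*\G=0$, since $\Hproj\dbar^*=0$.
\item $\Q\Hproj=\dbar^*\G\Hproj=0$, since $\G\Hproj=0$ by $\ker\G=\im\Hproj$.
\end{itemize}

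The only step needing care is the commutation $\Delta\G=\G\Delta$, together with the injectivity of $\Delta$ on $\ker\Hproj$. Both follow from the Hodge decomposition, since $\Delta$ restricted to $\mathcal{H}^\perp$ is a bijection onto $\mathcal{H}^\perp$ whose inverse is $\G$. I would state this explicitly at the outset; everything else is formal.
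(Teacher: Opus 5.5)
Your proof is correct. Item 1 works because $\G\dbar\alpha$ and $\dbar\G\alpha$ both lie in $\mathcal{H}^\perp$ and have the same image under $\Delta$, which is injective there; items 2 and 3 are then formal consequences. The paper gives no proof of its own and only cites Wells, where essentially this argument appears ($\G$ commutes with operators commuting with $\Delta$ because $\G$ inverts $\Delta$ on $\mathcal{H}^\perp$). The step $\Delta\G=\id-\Hproj$ that you flag does follow from the listed relations together with $\im\Delta\perp\mathcal{H}$.
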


\section{Preliminaries on Hamilton--Nash--Moser theory}
\label{sect: prelim HNM}
In this section we review the needed concepts and facts from Hamilton--Nash--Moser theory \cite{hamilton82}. 

\subsection{Tame Fréchet spaces and the Nash--Moser category}

\subsubsection{Graded Fréchet spaces}
\label{sect: graded frechet}
\begin{definition}
    A \emph{Fréchet space} is a topological vector space $\mathcal{F}$ the topology of which 
    is induced by a countable family of semi-norms $\{\Vert\cdot\Vert_n\}_{n\in \mathbb{N}_0}$ (in the sense that 
    the sets $\{ x\in \mathcal F \mid \| x\|_n < \varepsilon \}$, $n\in \N_0$, $\varepsilon>0$,  form a neighborhood subbasis of the origin) such that the following properties are satisfied:\vspace{-7mm}
    \begin{itemize}\singlespacing
    \singlespacing
        \item \emph{Hausdorff property}: $\Vert f\Vert_n=0$ $\forall\,n\in\N_0$ $\implies$ $f=0$; 
        \item \emph{completeness}: every sequence $\{f_i\}\subset \mathcal{F}$ such that $\Vert f_i-f_j\Vert_n\to 0$ for every $n\in\N_0$ as $i,j\to\infty$ converges.   
    \end{itemize}
\end{definition}

\begin{definition}
    A \emph{grading} on a Fréchet space $\mathcal{F}$ is a collection $\{\Vert\cdot\Vert_n\}_{n\in\N_0}$ of semi-norms on $\mathcal{F}$ inducing the topology such that for every $f\in \mathcal{F}$
    $$
        \Vert f\Vert_0\leq \Vert f\Vert_1\leq \Vert f\Vert_2\leq...
    $$
    Two gradings $\{\Vert\cdot\Vert_n\}$ and $\{\Vert\cdot \Vert'_n\}$ on a Fréchet space $\mathcal{F}$ are \emph{tamely equivalent} of degree $r\in \Z$ and base $b\in\N_0$ if for all $f\in \mathcal{F}$ and $n\geq b$ we have
    $$
        \Vert f\Vert_n\leq C(n)\Vert f\Vert'_{n+r}\quad \text{and}\quad \Vert f\Vert'_n\leq C'(n)\Vert f\Vert_{n+r}
    $$
    for some constants $C(n), C'(n)> 0$.
    A \emph{graded Fréchet space} is a Fréchet space with a choice of grading.
\end{definition}

The prototypical example~\cite{hamilton82} of a graded Fréchet space is the space $\left(\Sigma(\mathcal{B}),\{\Vert\cdot\Vert_n\}_{n\in \N_0}\right)$
of rapidly decreasing sequences $\{f_k\}$  in a Banach space $(\mathcal{B}, \Vert\cdot\Vert_\mathcal{B})$, that is sequences such that 
$$
    \Vert\{f_k\}\Vert_n:=\sum_{k=0}^{\infty}\, e^{nk}\Vert f_k\Vert_\mathcal{B}<\infty 
$$
for every $n$.

The most important example of a (graded) Fr\'echet space for us is the space $\Gamma(M,V)$ of smooth global sections of a vector bundle $V$ over a compact oriented manifold $M$.
Fixing  bundle metrics and connections on $V$ and $TM$ and denoting by $D^j s$ the $j$-th covariant derivative of $s\in \Gamma(V)$ the $W^{k,2}$-Sobolev norms defined by 
    \begin{align*}
        \Vert f\Vert_{k,2}:=\left(\sum_{i=0}^k\, \int _M\langle D^if,D^if\rangle \,\mathrm{vol}\right)^{1/2}\end{align*}
        constitute a grading on $\Gamma(M,V)$, 
    where $\langle\,,\,\rangle$ denotes the induced bundle metric on $(T^*M)^{\otimes k}\otimes V$ and $\mathrm{vol}$ denotes the metric volume form. 
An equivalent grading is given by the $C^k$-norms
    \begin{align*}
    \Vert s\Vert_{k,\infty} :=\sum_{j=0}^{k}\,\sup_{x\in M} \Vert D^j s(x)\Vert ,   
    \end{align*}
    as follows from the Sobolev embedding theorem.

\subsubsection{Directional derivatives and families of linear maps}\label{sect: derivatives and families of linear maps}
\begin{definition}
    Let $\mathcal{F}$ and $\mathcal{G}$ Fréchet spaces, $\mathcal{U}\subset \mathcal{F}$ open and $P\colon \mathcal{U}\rightarrow \mathcal{G}$ a continuous map. The \emph{derivative} of $P$ at $f\in \mathcal{U}$ in the direction $h\in \mathcal{F}$ is defined by
    $$
        DP(f)h=\lim_{t\to 0}\frac{P(f+th)-P(f)}{t}.
    $$
    We say $P$ is \emph{continuously differentiable} or $C^1$ on $\mathcal{U}$ if the limit exists for all $f\in \mathcal{U}$ and all $h\in \mathcal{F}$ and if
    $$
        DP\colon \mathcal{U}\times \mathcal{F}\longrightarrow \mathcal{G}
    $$
    is continuous.
\end{definition}

Here, we use the notation $L(f,g)=L(f)g$ to indicate that the map $L$ is linear in the second variable. 
We also refer to such maps as \emph{families of linear maps}  
and denote families of multi-linear maps 
as $L(f,g_1,...,g_n)=L(f)\{g_1,...,g_n\}$. 

The directional derivative on Fréchet spaces satisfies the usual chain rule \cite{hamilton82}, i.e.~given Fréchet spaces $\mathcal{F}$, $\mathcal{G}$ and $\mathcal{H}$, open subsets $\mathcal{U}\subset \mathcal{F}$, $\mathcal{V}\subset \mathcal{G}$ and $C^1$-maps $P\colon \mathcal{U}\rightarrow \mathcal{V}$, $Q\colon \mathcal{V}\rightarrow \mathcal{H}$, the composition $Q\circ P$ is a $C^1$-map and 
    $$D[Q\circ P](f)h=DQ\left(P(f)\right)DP(f)h.$$

For maps that depend on several variables we can define partial differentiability in the usual way. Maps of class $C^n$ are defined
iteratively and the $n$-th derivative of a $C^n$-map is a family of $n$-linear maps. A map of Fréchet spaces is called \emph{smooth} if it is $C^n$ for every $n\in \N$.

For a $C^1$-map, the following property holds.
\begin{proposition}[Theorem I.3.2.2 in \cite{hamilton82}]
\label{prop: int equation DP}
    Let $P\colon (\mathcal{U}\subset \mathcal{F})\rightarrow \mathcal{G}$ be a $C^1$-map between Fréchet spaces, $f,h\in \mathcal{F}$ such that $f+th\in\mathcal{U}$ for all $t\in [0,1]$. Then
    $$
        P(f+h)-P(f)=\int_0^1\, DP(f+th)h\,\dd t.
    $$
\end{proposition}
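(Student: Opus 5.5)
The plan is to reduce to the one-variable fundamental theorem of calculus for continuous curves in a Fréchet space, and to detect equality through continuous linear functionals. Define the curve $c\colon[0,1]\to\mathcal{G}$ by $c(t):=P(f+th)$. It is well defined because $f+th\in\mathcal{U}$ for all $t\in[0,1]$. By the chain rule quoted above, applied to the affine map $t\mapsto f+th$ (which is $C^1$ with derivative $h$), $c$ is $C^1$ with $c'(t)=DP(f+th)h$. Continuity of $DP$ on $\mathcal{U}\times\mathcal{F}$ makes $t\mapsto DP(f+th)h$ a continuous curve in $\mathcal{G}$.

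The first step is to make sense of the integral. For a continuous curve $g\colon[0,1]\to\mathcal{G}$, I would define $\int_0^1 g(t)\,\dd t$ as the limit of Riemann sums. A continuous map on a compact interval is uniformly continuous with respect to each seminorm $\Vert\cdot\Vert_n$. Hence the Riemann sums are Cauchy in every seminorm as the mesh tends to zero, and completeness of $\mathcal{G}$ gives a limit, which is unique by the Hausdorff property. Two consequences follow directly from the Riemann-sum definition: the integral is linear, and it commutes with every continuous linear functional $\ell\in\mathcal{G}^*$, that is, $\ell\bigl(\int_0^1 g\bigr)=\int_0^1\ell\circ g$.

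Next, fix $\ell\in\mathcal{G}^*$ and set $\phi:=\ell\circ c\colon[0,1]\to\R$ (or $\C$). Linearity and continuity of $\ell$ let it pass through the difference quotient, so $\phi'(t)=\ell\bigl(DP(f+th)h\bigr)$, and this is continuous in $t$. The classical fundamental theorem of calculus then gives
\begin{align*}
\ell\bigl(P(f+h)-P(f)\bigr)=\phi(1)-\phi(0)=\int_0^1\ell\bigl(DP(f+th)h\bigr)\dd t=\ell\Bigl(\int_0^1 DP(f+th)h\,\dd t\Bigr).
\end{align*}

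Finally, I conclude using the fact that continuous linear functionals separate points of $\mathcal{G}$. This holds by Hahn--Banach for locally convex Hausdorff spaces: given $x\neq 0$, choose $n$ with $\Vert x\Vert_n>0$ and extend a functional defined on $\mathrm{span}\{x\}$ and dominated by $\Vert\cdot\Vert_n$. Applying this to the difference of the two sides yields the stated identity.

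The main obstacle is foundational rather than computational. One must check that the integral exists in a general Fréchet space and commutes with functionals, which relies on completeness and uniform continuity. One also needs the separation property. Both steps are standard, and the rest is routine.
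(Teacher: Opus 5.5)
Your argument is correct, and it is essentially the proof in Hamilton's monograph that the paper cites; the paper itself gives no proof. Both characterize the integral of a continuous curve by its commuting with continuous linear functionals, apply the scalar fundamental theorem of calculus to $t\mapsto\ell\bigl(P(f+th)\bigr)$, and conclude via Hahn--Banach.

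One small point: you do not need the chain rule, which in Hamilton's development is actually derived from this very formula. The identity $c'(t)=DP(f+th)h$ is literally the definition of the directional derivative of $P$ at $f+th$ in the direction $h$.
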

Here, $\int_a^b$ denotes the Riemann integral of a continuous map $f\colon [a,b]\rightarrow \mathcal{F}$ to a Fréchet space $\mathcal{F}$, which is defined in \cite[Thm I.2.2.1]{hamilton82}. 
Later on we will use the following simplified version of Taylor's formula:
\begin{theorem}[Theorem I.3.5.6 in \cite{hamilton82}]
\label{thm: taylor}
Let $\mathcal{F}$, $\mathcal{G}$ be Fréchet spaces, $\mathcal{U}\subset \mathcal{F}$ open and $P\colon \mathcal{U}\rightarrow \mathcal{G}$ a $C^2$-map, $f,h\in \mathcal{F}$ such that $f+th\in\mathcal{U}$ for all $t\in [0,1]$.  Then 
   $$ P(f+h)=P(f)+DP(f)h+R(h),$$
where $R$ is a $C^2$-map such that $R(sh)=s^2R(h,s)$ for small enough $s\in\R$, with $R(h,s)$ depending continuously on $s$. If $P$ is smooth, $R(h,s)$ depends smoothly on $s$.
\end{theorem}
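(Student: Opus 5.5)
The plan is to derive the formula from the integral form of the mean value theorem, Proposition~\ref{prop: int equation DP}, applied twice. Fix $f,h$ with $f+th\in\mathcal{U}$ for $t\in[0,1]$, and define the remainder by
$$R(h):=P(f+h)-P(f)-DP(f)h.$$
Proposition~\ref{prop: int equation DP} gives
$$P(f+h)-P(f)=\int_0^1 DP(f+th)h\,\dd t.$$
Since the integrand is continuous in $t$ and $DP(f)h$ does not depend on $t$, I will write
$$R(h)=\int_0^1\bigl(DP(f+th)h-DP(f)h\bigr)\dd t.$$
Because $P$ is $C^2$, the map $g\mapsto DP(g)h$ is $C^1$ with derivative $D^2P(g)\{h,\cdot\}$. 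Applying Proposition~\ref{prop: int equation DP} to this map along the segment from $f$ to $f+th$ gives
$$DP(f+th)h-DP(f)h=\int_0^1 D^2P(f+uth)\{h,th\}\,\dd u.$$
Using linearity of $D^2P$ in the second slot, I obtain
$$R(h)=\int_0^1\!\!\int_0^1 t\,D^2P(f+uth)\{h,h\}\,\dd u\,\dd t.$$

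Next I replace $h$ by $sh$ for small $s$, so that $f+tsh\in\mathcal{U}$ for all $t\in[0,1]$; this holds for $|s|\le 1$ by hypothesis. Bilinearity of $D^2P(g)\{\cdot,\cdot\}$ then gives $R(sh)=s^2R(h,s)$ with
$$R(h,s):=\int_0^1\!\!\int_0^1 t\,D^2P(f+usth)\{h,h\}\,\dd u\,\dd t.$$

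The regularity claims come next. $R$ is $C^2$ because it is a combination of the $C^2$ map $P$, a constant, and the continuous linear map $h\mapsto DP(f)h$. The map $(s,t,u)\mapsto D^2P(f+usth)\{h,h\}$ is continuous on a compact set, since $D^2P$ is jointly continuous. Continuity of Riemann integrals of Fréchet-valued maps with respect to parameters \cite[Thm I.2.2.1]{hamilton82} then gives continuity of $R(h,s)$ in $s$. If $P$ is smooth, I will differentiate under the integral sign in $s$, which Hamilton permits for continuous integrands with continuous partial derivatives; the $s$-derivatives are expressed through $D^{k}P$, which are all continuous, so $R(h,s)$ is smooth in $s$.

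I expect the main obstacle to be justifying the analytic manipulations in the Fréchet setting. These are: continuous dependence of the integral on parameters, differentiation under the integral sign, and linearity of the integral, which is needed to pass from $\int DP(f+th)h - DP(f)h$ to the single difference. I would handle them all by citing the corresponding results in \cite[Section I.2]{hamilton82}, namely that the Riemann integral is linear and commutes with continuous linear functionals and with parameter derivatives when the integrand is jointly $C^1$. The rest is bookkeeping. I would also note that on $s\neq0$ one could simply define $R(h,s)=R(sh)/s^2$; the integral formula is what shows that this extends continuously, respectively smoothly, to $s=0$.
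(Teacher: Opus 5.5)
Your argument is correct. The paper gives no proof of this statement and simply quotes Hamilton's Theorem I.3.5.6; your iterated use of the integral formula is the standard integral-remainder proof, since substituting $v=ut$ turns your double integral into $R(h)=\int_0^1(1-v)\,D^2P(f+vh)\{h,h\}\,\dd v$.

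One small slip: for $s<0$ the hypothesis says nothing about the points $f+tsh$, so ``$|s|\le 1$ by hypothesis'' is wrong. Use instead that $\mathcal{U}$ is open. Then $\{\sigma\in\R: f+\sigma h\in\mathcal{U}\}$ is an open set containing $[0,1]$, hence contains some interval $(-\delta,1]$, and that is all that ``small enough $s$'' requires.
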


\subsubsection{Tame linear maps and tame Fréchet spaces}
\begin{definition}
    Let $b\in \N_0$, $r\in\Z$. A linear map $L\colon \mathcal{F}\rightarrow \mathcal{G}$ of graded Fréchet spaces satisfies a \emph{tame estimate} of degree $r$ and base $b$ if for every $f\in \mathcal{F}$ and $n\geq b$ we have
    \begin{align*}
        \Vert Lf\Vert_n\leq C(n)\Vert f\Vert_{n+r}
    \end{align*}
    for some constant $C(n)>0$. We say $L$ is \emph{tame linear} if it satisfies a tame estimate for some~$r$ and~$b$. We will often leave the dependence of $C(n)$ on $n$ implicit and simply write $C$.
\end{definition}

We include the following examples to be used later. 

\begin{example}[Example I.1.2.2(3) in \cite{hamilton82}]
\label{ex: lin part diff op tame}
    A linear partial differential operator $L$ of order $r$ (on a vector bundle $V$) over a compact manifold is tame linear of degree $r$:  $\Vert Ls\Vert_{n,2}\leq C\Vert s\Vert_{n+r,2}$ and similarly $\Vert Ls\Vert_{n,\infty}\leq C\Vert s\Vert_{n+r,\infty}$, for all $s\in \Gamma(M,V)$. 
\end{example}

\begin{example}
\label{ex: bracket}
    Let $M$ be a compact complex manifold. Then the bracket $[\cdot,\cdot]\colon \A^{0,p}(M,T^{1,0})\times \A^{0,q}(M,T^{1,0})\rightarrow \A^{p+q}(M,T^{1,0})$ as defined in Section~\ref{sect: forms with values in holom} is tame linear in both arguments separately. More precisely, we have $\Vert [\alpha,\beta]\Vert_{n,2}\leq C\Vert \alpha\Vert_{n+1,2}\Vert\beta\Vert_{n+1,2}$ for $n\geq \dim M/2+1$ and similarly $\Vert [\alpha,\beta]\Vert_{n,\infty}\leq C\Vert \alpha\Vert_{n+1,\infty}\Vert\beta\Vert_{n+1,\infty}$ for all $n\in \N_0$.
\end{example}

\begin{example}
\label{ex: Q}
Given a holomorphic vector bundle $E\rightarrow M$ over a compact complex manifold $M$, the operator $\Q$ given in Section~\ref{sect: hodge thy} is tame linear: For every $\xi\in\A^{\bullet}(M,E)$ and $n\in \N_0$ have
$\Vert \Q \xi\Vert_{n,2}\leq C\Vert \xi\Vert_{n-1,2}$ \cite{Wells2007-er}. 
\end{example}

In order to define tame Fréchet spaces, we need the notion of a \emph{tame direct summand}.
\begin{definition}
    Let $\mathcal{F}$ and $\mathcal{G}$ be graded spaces. Then $\mathcal{F}$ is a \emph{tame direct summand} of $\mathcal{G}$ if there are tame linear maps $L\colon \mathcal{F}\rightarrow \mathcal{G}$ and $M\colon \mathcal{G}\rightarrow \mathcal{F}$ such that the composition
    $$\mathcal{F}\xlongrightarrow{L}\mathcal{G}\xlongrightarrow{M}\mathcal{F}
    $$
    is the identity.
\end{definition}

\begin{definition}
    A \emph{tame} Fréchet space is a graded Fréchet space that is a tame direct summand of the Fr\'echet space $\Sigma (\mathcal{B})$ of rapidly decreasing sequences for some Banach space $\mathcal{B}$.
\end{definition}

The Fréchet space $\Gamma(M,V)$ with the (equivalent) gradings discussed in the end of Section~\ref{sect: graded frechet} is a tame Fréchet space \cite[Cor II.1.3.9]{hamilton82}.

\subsubsection{Smooth tame maps and the Nash--Moser category}

\begin{definition}
    Let $\mathcal{F}$ and $\mathcal{G}$ be graded Fréchet spaces,  $P\colon (\mathcal{U}\subset \mathcal{F})\rightarrow \mathcal{G}$ a map and $b\in \N_0$, $r\in\Z$. We say that $P$ satisfies a \emph{tame estimate} of degree $r$ and base $b$ if for all $f\in \mathcal{U}$ and $n\geq b$ it holds
    \begin{align*}
        \Vert P(f)\Vert_n\leq C(n)\left(1+\Vert f\Vert_{n+r}\right)
    \end{align*}
    for some constant $C(n)>0$. We say that $P$ is \emph{tame} if it is defined on an open set, is continuous and satisfies a tame estimate in a neighborhood of each point. 
\end{definition}

This is a generalization of the notion of tame linear maps, since a map between graded Fréchet spaces is a tame linear map if and only if it is linear and tame \cite[Thm II.2.1.5]{hamilton82}. For tame maps of several variables, we allow different degrees in the different variables. More specifically, we say that a map $P(f,g)$ of graded Frechet spaces satisfies a tame estimate of degree $r$ in $f$ and $s$ in $g$ and base $b$ if 
\begin{align*}
    \Vert P(f,g)\Vert_{n}\leq C(n)(1+\Vert f\Vert_{n+r}+\Vert g\Vert_{n+s})
\end{align*}
for all $n\geq b$ for some constants $C(n)$ independent of $f$ and $g$.

For tame families of linear maps, we have the following.
\begin{lemma}[Lemma II.2.1.7 in \cite{hamilton82}]\label{lem: tame estimate family lin maps}
Let $L(f)h$ be a continuous family of linear maps that satisfies a tame estimate of degree $r$ in $f$ and degree $s$ in $h$ in a $\Vert \cdot\Vert_{b+r}$-neighborhood of $f_0$ and for all $h$ in a $\Vert\cdot\Vert_{b+s}$-neighborhood of $0$. Then for all $n\geq b$, $f$ in a $\Vert\cdot\Vert_{b+s}$-neighborhood of $f_0$ and all $h$ it satisfies the estimate
\begin{align*}
    \Vert L(f)h\Vert_n\leq C(n)(\Vert f\Vert_{n+r}\Vert h\Vert_{b+s}+\Vert h\Vert_{n+s})
\end{align*}
for constants $C(n)$ independent of $f$ and $h$.
\end{lemma}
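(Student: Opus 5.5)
The plan is to follow Hamilton's argument for the tame estimate of a bilinear-type family: first use continuity and linearity to pass from the local estimate to a global bound in $h$, then use scaling in $h$ to get the product form. This is essentially Lemma II.2.1.7 in \cite{hamilton82}.

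\emph{Step 1 (local bound for all $h$).} By hypothesis there are $\varepsilon>0$ and $\delta>0$ with the following property. For all $f$ with $\Vert f-f_0\Vert_{b+r}<\varepsilon$, all $h$ with $\Vert h\Vert_{b+s}<\delta$, and all $n\geq b$,
\begin{align*}
\Vert L(f)h\Vert_n\leq C(n)\left(1+\Vert f\Vert_{n+r}+\Vert h\Vert_{n+s}\right).
\end{align*}
I will shrink the $f$-neighborhood so that it is contained in the one on which the estimate holds. To remove the restriction on $h$, I use linearity in $h$. Given arbitrary $h\neq 0$, I set $\lambda:=\Vert h\Vert_{b+s}/(\delta/2)$ when $\Vert h\Vert_{b+s}>0$, and apply the estimate to $h/\lambda$. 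Multiplying by $\lambda$ gives
\begin{align*}
\Vert L(f)h\Vert_n\leq C\left(\Vert h\Vert_{b+s}\,(1+\Vert f\Vert_{n+r})+\Vert h\Vert_{n+s}\right).
\end{align*}
If $\Vert h\Vert_{b+s}=0$, I instead use $t h$ with $t\to\infty$ to conclude $\Vert L(f)h\Vert_n\leq C\Vert h\Vert_{n+s}$.

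\emph{Step 2 (absorbing the constant).} The term $\Vert h\Vert_{b+s}$ coming from the $1$ must be absorbed. Since gradings are monotone and $n\geq b$, we have $\Vert h\Vert_{b+s}\leq\Vert h\Vert_{n+s}$. This yields
\begin{align*}
\Vert L(f)h\Vert_n\leq C(n)\left(\Vert f\Vert_{n+r}\Vert h\Vert_{b+s}+\Vert h\Vert_{n+s}\right),
\end{align*}
which is the claimed form.

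\emph{Main obstacle.} The delicate point is the bookkeeping of neighborhoods. The statement asks for $f$ in a $\Vert\cdot\Vert_{b+s}$-neighborhood, whereas the hypothesis is phrased in $\Vert\cdot\Vert_{b+r}$. I expect to handle this as in Hamilton: take the intersection of the given $f$-neighborhood with the one required, or assume, after possibly increasing the base $b$, that the relevant indices agree. A second point is that continuity of $L$ is used only to guarantee that the neighborhoods on which the tame estimate holds can be chosen uniformly. The scaling argument itself is routine.
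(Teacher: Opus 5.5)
Your proposal is correct, and it is exactly Hamilton's scaling argument for Lemma II.2.1.7, which the paper cites without proof: rescale $h$ to a fixed $\Vert\cdot\Vert_{b+s}$-size, undo the scaling by linearity in $h$ (taking $t\to\infty$ when $\Vert h\Vert_{b+s}=0$), and absorb the leftover $\Vert h\Vert_{b+s}$ into $\Vert h\Vert_{n+s}$ by monotonicity of the grading. The neighborhood issue you flag is not a real obstacle: the argument never moves $f$, so it gives the estimate on the same $\Vert\cdot\Vert_{b+r}$-neighborhood as in the hypothesis (the $\Vert\cdot\Vert_{b+s}$ in the statement's conclusion appears to be a slip for $\Vert\cdot\Vert_{b+r}$, and for $s\geq r$ a $\Vert\cdot\Vert_{b+r}$-neighborhood contains a $\Vert\cdot\Vert_{b+s}$-neighborhood anyway); also, continuity of $L$ is not used anywhere in the argument.
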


\begin{definition}
    Let $\mathcal{F}$ and $\mathcal{G}$ be graded Fréchet spaces. A map $P\colon (\mathcal{U}\subset \mathcal{F})\rightarrow \mathcal{G}$ is \emph{smooth tame} if it is smooth and all derivatives of $P$ are tame.
\end{definition}

The set of tame Fréchet spaces together with the smooth tame maps form a category, the \emph{Nash-Moser category}. An important class of smooth tame maps are \emph{non-linear partial differential operators}:
\begin{definition}
    Let $M$ be a compact manifold, $V$ and $W$ vector bundles over $M$. A \emph{non-linear partial differential operators} of order $r$ is a map
    $$P\colon (\mathcal{U}\subset \Gamma(M,V))\longrightarrow \Gamma(M,W)$$
    which factorizes through a smooth fiber-preserving map $U\subset J^r V \to W$, where $U$ is an open subset of the vector bundle $J^r V$ of $r$-jets of sections of $V$. We will also refer to non-linear partial differential operators of order zero as \emph{non-linear vector bundle operators}.
\end{definition}

\begin{proposition}[Theorem II.2.2.7 in \cite{hamilton82}]
\label{prop: nonlin part smooth tame}
     A non-linear partial differential operator is smooth tame. 
\end{proposition}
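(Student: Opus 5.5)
The statement is Proposition~\ref{prop: nonlin part smooth tame}: a non-linear partial differential operator $P(s)=p(j^r s)$, with $p\colon U\subset J^rV\to W$ a smooth fiber-preserving map, is smooth tame. I would prove it by reducing to two facts: order-zero operators are smooth tame, and linear differential operators are tame linear. Composites of smooth tame maps are smooth tame.

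\textbf{Step 1: the jet map.} The map $j^r\colon \Gamma(M,V)\to\Gamma(M,J^rV)$ is a linear differential operator of order $r$. By Example~\ref{ex: lin part diff op tame} it is tame linear of degree $r$. A continuous linear map is smooth, since $D(j^r)(s)h=j^rh$ and all higher derivatives vanish. Hence $j^r$ is smooth tame, and $P=\tilde p\circ j^r$ with $\tilde p(\sigma)=p\circ\sigma$ defined on the open set $\{\sigma:\sigma(M)\subset U\}$. This set is open in the $C^0$ topology because $M$ is compact.

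\textbf{Step 2: the order-zero operator $\tilde p$.} First I would show that $\tilde p$ is smooth with
\[
D^k\tilde p(\sigma)\{h_1,\dots,h_k\}=(\partial^k_{\mathrm{fib}}p)(\sigma)\{h_1,\dots,h_k\},
\]
where $\partial_{\mathrm{fib}}$ is the fiber derivative. This follows from Taylor's formula in the fibers with uniform remainder bounds on compact subsets of $U$, applied to each $C^n$-norm. The derivatives are again of the form "smooth fiberwise map evaluated at $(\sigma,h_1,\dots,h_k)$". It therefore suffices to prove a tame estimate for an arbitrary order-zero operator $q(x,\sigma(x))$ near a point $\sigma_0$. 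I would use the $C^n$ grading and work in local trivializations over a finite atlas. For $n\ge 1$, the chain rule (Faà di Bruno formula) writes $D^n[q(x,\sigma)]$ as a sum of terms
\[
(\partial^{a}_x\partial^{j}_\sigma q)(x,\sigma)\,D^{i_1}\sigma\cdots D^{i_j}\sigma,\qquad a+i_1+\dots+i_j=n.
\]
On a $C^0$-neighborhood of $\sigma_0$ whose closure stays in a compact subset of $U$, the coefficient factor is bounded. The products are handled by the interpolation inequality $\Vert\sigma\Vert_{i,\infty}\le C\Vert\sigma\Vert_{0,\infty}^{1-i/n}\Vert\sigma\Vert_{n,\infty}^{i/n}$, which gives
\[
\prod_l\Vert\sigma\Vert_{i_l,\infty}\le C\,\Vert\sigma\Vert_{0,\infty}^{j-1}\Vert\sigma\Vert_{n,\infty}.
\]
Together these give $\Vert q(\sigma)\Vert_{n,\infty}\le C(n)(1+\Vert\sigma\Vert_{n,\infty})$, a tame estimate of degree $0$ and base $0$. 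Applied to the derivatives, the same reasoning gives estimates that are linear in the highest norm of each $h_i$, so all derivatives are tame.

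\textbf{Step 3: conclusion.} I would compose the two pieces and invoke the chain rule. A composite of tame maps is tame, with degrees adding, so $P$ is tame of degree $r$. Its $k$-th derivatives are compositions of the derivatives of $\tilde p$ with $j^r$, hence also tame. Equivalence of the $C^k$ and Sobolev gradings transfers the result to either grading.

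\textbf{Main obstacle.} I expect the main obstacle to be the interpolation inequality for $C^k$-norms on a compact manifold. Proving it requires a local Landau–Kolmogorov type inequality with uniform constants, glued together by a partition of unity. Alternatively, one can embed $\Gamma$ as a tame direct summand of $\Sigma(\mathcal B)$ and use smoothing operators, as Hamilton does.
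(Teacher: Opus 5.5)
Your proposal is correct and follows essentially the same route as Hamilton's proof of Theorem II.2.2.7, which the paper cites rather than reproving. As in Hamilton, you factor $P$ through the tame linear jet map $j^r$, obtain the degree-zero tame estimate for vector bundle operators from the Fa\`a di Bruno expansion together with $C^k$ interpolation (Hamilton derives interpolation from smoothing operators), and observe that the derivatives are again operators of the same type, hence tame. Your product bound is also valid for the terms with $a>0$: there interpolation gives an extra factor $(\Vert\sigma\Vert_{0,\infty}/\Vert\sigma\Vert_{n,\infty})^{a/n}\le 1$.
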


An application of Proposition~\ref{prop: nonlin part smooth tame} is the following proposition.
\begin{proposition}
\label{prop: comp vb homs sm tame}
    Let $M$ be a compact manifold, $V$, $V_1$, $V_2$ and $V_3$ vector bundles over $M$. Then the maps
    \begin{align*}
        \Gamma\left(M,\Hom(V_2,V_3)\right)\times \Gamma\left(M,\Hom(V_1,V_2)\right)&\longrightarrow \Gamma\left(M,\Hom(V_1,V_3)\right)\\
        (A,B)&\longmapsto A\circ B
    \end{align*}
    and 
        \begin{align*}
        \Gamma(M,\aut V)\subset \Gamma\left(M,\Hom(V,V)\right)&\longrightarrow \Gamma\left(M,\aut V)\right)\\
        A&\longmapsto A^{-1}
    \end{align*}
    are smooth tame.
\end{proposition}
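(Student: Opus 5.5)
The plan is to realize both maps as non-linear partial differential operators of order zero, i.e.\ non-linear vector bundle operators, and then invoke Proposition~\ref{prop: nonlin part smooth tame}. For composition, we first pass from the product of section spaces to sections of a single bundle via the identification
\begin{align*}
\Gamma\left(M,\Hom(V_2,V_3)\right)\times\Gamma\left(M,\Hom(V_1,V_2)\right)\cong\Gamma\left(M,\Hom(V_2,V_3)\oplus\Hom(V_1,V_2)\right).
\end{align*}
This identification is a tame linear isomorphism if we grade the product by $\Vert (A,B)\Vert_n=\Vert A\Vert_n+\Vert B\Vert_n$, which is equivalent to the Sobolev or $C^k$ grading of the direct sum bundle built from the direct sum metric and connection. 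The composition map is then induced by the fiberwise map
\begin{align*}
\Hom(V_2,V_3)\oplus\Hom(V_1,V_2)\to\Hom(V_1,V_3),\qquad (a,b)\mapsto a\circ b .
\end{align*}
In local trivializations this map is bilinear and polynomial in the fiber coordinates, so it is a smooth fiber-preserving map between total spaces. Hence composition is a non-linear differential operator of order $0$, and therefore smooth tame by Proposition~\ref{prop: nonlin part smooth tame}.

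For inversion, we take $\aut V\subset\Hom(V,V)$ to be the subset of invertible endomorphisms. It is open in the total space, since it is the preimage of $\C^\times$ (or $\R\setminus\{0\}$) under the continuous fiberwise determinant. Because $M$ is compact, $\Gamma(M,\aut V)$ is an open subset of $\Gamma(M,\Hom(V,V))$ in the $C^0$ topology, and therefore also in the Fréchet topology. The fiberwise map $a\mapsto a^{-1}$ on $\aut V$ is smooth: in a local trivialization it is matrix inversion, given by Cramer's rule as the adjugate divided by the determinant, which is a rational function with nonvanishing denominator. It is also fiber-preserving and maps into $\Hom(V,V)$. Thus inversion is a non-linear vector bundle operator defined on the open set $\Gamma(M,\aut V)$, and Proposition~\ref{prop: nonlin part smooth tame} applies directly.

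The main obstacle is bookkeeping rather than substance: Proposition~\ref{prop: nonlin part smooth tame} is stated for maps defined on sections of a single bundle, so the product-space identification above is needed. I would justify that identification by noting that a tame linear isomorphism composed with a smooth tame map is smooth tame, and that the sum of the norms of the two components is tamely equivalent to the norm on the direct sum bundle of degree $0$.
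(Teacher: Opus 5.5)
Your proposal is correct and follows the paper's approach: the paper presents this proposition simply as an application of Proposition~\ref{prop: nonlin part smooth tame}, i.e.\ both maps are order-zero non-linear vector bundle operators. Your extra steps fill in details the paper leaves implicit: identifying the product of section spaces with $\Gamma$ of the direct sum bundle, openness of $\Gamma(M,\aut V)$ via compactness of $M$, and smoothness of fiberwise inversion via Cramer's rule.
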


\subsection{The Nash--Moser theorem and families of inverses}

We will now state the Nash--Moser theorem, which is a generalization of the inverse function theorem on Banach spaces to the more general Nash--Moser category. 

\begin{theorem}[Nash--Moser theorem, Theorem III.1.1.1 in \cite{hamilton82}]
\label{thm: nash moser}
Let $\mathcal{F}$ and $\mathcal{G}$ be tame Fréchet spaces and $P\colon (\mathcal{U}\subset \mathcal{F})\rightarrow \mathcal{G}$ a smooth tame map. Suppose that the equation for the derivative $DP(f)h=k$ has a unique solution $h=VP(f)k$ for all $f\in \mathcal{U}$ and $k\in \mathcal{G}$ and that the family of inverses $VP\colon \mathcal{U}\times \mathcal{G}\rightarrow \mathcal{F}$ is a smooth tame map. Then $P$ is locally invertible and each local inverse $P^{-1}$ is a smooth tame map.     
\end{theorem}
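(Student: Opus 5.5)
The statement is Hamilton's Nash--Moser inverse function theorem, which we cite from \cite{hamilton82}. The plan below reconstructs Hamilton's argument rather than anything specific to this paper.

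\textbf{Reduction.} Since $\mathcal{F}$ and $\mathcal{G}$ are tame direct summands of spaces $\Sigma(\mathcal{B})$, I would first reduce to the case where both are of this form. For a Banach space $\mathcal{B}$, such spaces carry smoothing operators $S_t$, obtained by truncating sequences at index about $t$. For $n \le m$ they satisfy
\begin{align*}
\Vert S_t f\Vert_m\leq C e^{(m-n)t}\Vert f\Vert_n,\qquad \Vert (1-S_t)f\Vert_n\leq C e^{-(m-n)t}\Vert f\Vert_m .
\end{align*}
They also give the interpolation inequalities $\Vert f\Vert_k^{m-n}\leq C\Vert f\Vert_n^{m-k}\Vert f\Vert_m^{k-n}$ for $n\le k\le m$. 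Smooth tame maps, and the family $VP$, transfer through the tame linear maps $L$ and $M$ of the direct summand structure. The remaining work uses only tame estimates, which Lemma~\ref{lem: tame estimate family lin maps} puts into the form
\begin{align*}
\Vert L(f)h\Vert_n\leq C(\Vert f\Vert_{n+r}\Vert h\Vert_{b+s}+\Vert h\Vert_{n+s}).
\end{align*}

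\textbf{Surjectivity.} Fix $f_0$ and take $g$ close to $P(f_0)$ in a suitable low norm. I would solve $P(f)=g$ by a Newton scheme in which each step is smoothed to compensate the loss of derivatives of $VP$:
\begin{align*}
f_{j+1}=f_j+S_{t_j}\,VP(f_j)\bigl(g-P(f_j)\bigr),
\end{align*}
with $t_j$ growing geometrically. Hamilton actually uses a continuous version, an ODE $\dot f = S_t VP(f)(g-P(f))$, which I would follow. The new error splits into two parts:
\begin{itemize}
\item a quadratic Taylor remainder, controlled by Theorem~\ref{thm: taylor} and the tame bound on $D^2P$;
\item a smoothing error $(1-S_{t_j})(\dots)$, bounded by high norms times $e^{-(m-n)t_j}$.
\end{itemize}
The induction is on two kinds of bounds at once: low norms of the error decay super-exponentially, while high norms of $f_j$ grow at most like $e^{\kappa t_j}$. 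Interpolation closes the induction, and the $f_j$ then converge in every norm.

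\textbf{Injectivity and the inverse.} For injectivity near $f_0$, suppose $P(f_1)=P(f_2)$. Proposition~\ref{prop: int equation DP} gives $0=\int_0^1 DP(f_2+t(f_1-f_2))(f_1-f_2)\,\dd t$. Applying $VP(f_2)$ and estimating the difference $DP(f_2+t h)-DP(f_2)$ yields a bound of $\Vert f_1-f_2\Vert_n$ by a small multiple of itself, up to a loss of derivatives. Smoothing together with interpolation absorbs that loss, giving $f_1=f_2$. The tame estimates from the construction show $P^{-1}$ is continuous and tame. Differentiating $P(P^{-1}(g))=g$ gives $D(P^{-1})(g)=VP(P^{-1}(g))$. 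By induction through the chain rule and the smooth tameness of $VP$, all higher derivatives are then smooth tame.

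\textbf{Main obstacle.} The hard step is the convergence estimate in the surjectivity argument. One must choose the smoothing rate $t_j$ and the norm indices so that the quadratic gain beats the fixed loss of derivatives, uniformly in $n$. I would follow Hamilton's continuous-time bookkeeping rather than a discrete scheme, because there the required exponents reduce to a single linear inequality.
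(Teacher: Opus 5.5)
The paper gives no proof of this statement: it quotes it from Hamilton (Theorem III.1.1.1 in \cite{hamilton82}) and uses it as a black box, as your first sentence does, so your proposal agrees with the paper's treatment. Your outline (reduction to $\Sigma(\mathcal{B})$, smoothing operators, a smoothed Newton scheme for surjectivity, a quadratic estimate for injectivity, and $D(P^{-1})=VP\circ P^{-1}$) is a reasonable roadmap of Hamilton's own proof, modulo details such as first showing $P^{-1}$ is $C^1$ before differentiating $P\circ P^{-1}=\id$; the paper relies only on the citation.
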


In order to check that the family of inverses is a smooth tame family of linear maps it is in fact enough to show that it is continuous and tame, as the following proposition tells us:

\begin{proposition}[Theorem II.3.1.1 in \cite{hamilton82}]
\label{prop: inverse family is smooth tame}
Let $L\colon (\mathcal{U}\subset \mathcal{F})\times \mathcal{H}\rightarrow \mathcal{K}$ be a smooth tame family of linear maps. Suppose that $L(f)$ is invertible  for every $f\in \mathcal{U}$ and the family of inverses 
$$
    V\colon (\mathcal{U}\subset \mathcal{F})\times \mathcal{K}\rightarrow \mathcal{H}
$$
is continuous and tame as a map. Then $V$ is a smooth tame family of linear maps.
\end{proposition}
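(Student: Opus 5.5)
The plan is to derive an explicit formula for the derivative of $V$ in terms of $V$ itself and the derivatives of $L$. Then I bootstrap: each derivative of $V$ turns out to be a composition of maps already known to be continuous and tame.

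\textbf{First derivative.} Fix $f\in\mathcal{U}$, $h\in\mathcal{F}$ and $k\in\mathcal{K}$. Since $L(f)$ is linear and invertible, $V(f)$ is linear. For small $t$ I will use the resolvent-type identity
\begin{align*}
V(f+th)k-V(f)k=-V(f+th)\bigl[L(f+th)-L(f)\bigr]V(f)k,
\end{align*}
which follows by applying $L(f+th)$ to both sides. Divide by $t$ and set
\begin{align*}
k_t:=\tfrac{1}{t}\bigl[L(f+th)-L(f)\bigr]V(f)k .
\end{align*}
Because $L$ is $C^1$, $k_t\to DL(f)\{h,V(f)k\}$ in $\mathcal{K}$ as $t\to0$. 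The key point is that $V$ is continuous \emph{jointly} in $(f,k)$ as a map $\mathcal{U}\times\mathcal{K}\to\mathcal{H}$. Hence $V(f+th)k_t\to V(f)\,DL(f)\{h,V(f)k\}$, and the derivative exists with
\begin{align*}
DV(f)\{h,k\}=-V(f)\,DL(f)\bigl\{h,V(f)k\bigr\}.
\end{align*}
I expect this limit argument to be the main obstacle. In a Fréchet setting there are no operator norms, so one cannot argue via uniform bounds on $V(f+th)$. Joint continuity of $V$ replaces that, and I would make sure it is used in exactly this form.

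\textbf{Continuity and tameness of $DV$.} The right-hand side is a composition of continuous maps, namely $V$, $DL$ and projections, so $DV$ is continuous and $V$ is $C^1$. For tameness I use that $V$ and $DL$ are tame and that compositions and products of tame maps are tame (Hamilton). This gives tameness of $(f,h,k)\mapsto DV(f)\{h,k\}$ in a neighborhood of each point.

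\textbf{Higher derivatives.} I proceed by induction on $n$. The claim is that $D^nV(f)\{h_1,\dots,h_n,k\}$ is a finite sum of terms of the form
\begin{align*}
\pm V(f)\,D^{j_1}L(f)\{\cdots,V(f)\,D^{j_2}L(f)\{\cdots,V(f)k\}\}.
\end{align*}
Assume $V$ is $C^n$ with $D^nV$ of this shape. Each term is a composition of $V$, which is $C^1$ with the derivative formula above, and the smooth maps $D^jL$. So the chain and product rules show that $D^nV$ is $C^1$. Its derivative is again of the same shape, with one more factor of $V$ or one higher derivative of $L$. Continuity and tameness of $D^{n+1}V$ then follow exactly as in the first-derivative step, again from tameness of compositions. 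Hence $V$ is smooth tame, and it is a family of linear maps since each $V(f)$ is linear.
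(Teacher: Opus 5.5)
Your proposal is correct and is essentially the argument behind the cited result. The paper gives no proof of its own and defers to Hamilton, who first shows (Theorem I.5.3.1 in \cite{hamilton82}) that $V$ is smooth with $DV(f)\{h,k\}=-V(f)\,DL(f)\{h,V(f)k\}$, using the identity $V(f_1)k-V(f_0)k=-V(f_1)[L(f_1)-L(f_0)]V(f_0)k$ and the joint continuity of $V$ exactly as you do, and then gets tameness of $V$ and all its derivatives by induction from their expression as compositions of $V$ and the $D^jL$. The only detail you leave implicit is the derivative in the $k$-direction, which is $V(f)l$ by linearity and continuity, so the full derivative of $(f,k)\mapsto V(f)k$ is indeed jointly continuous.
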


\begin{theorem}[Implicit function theorem, cf.\ Theorem III.3.3.1 in \cite{hamilton82}]
\label{thm: impl
fn thm}
Let $\mathcal{F}$, $\mathcal{G}$ and $\mathcal{H}$ be tame Fréchet spaces and let 
$$
    A\colon (\mathcal{U}\subset \mathcal{F}\times \mathcal{G})\longrightarrow \mathcal{H}
$$
be a smooth tame map. Suppose that 
the partial derivative $D_f A(f,g)$ is surjective, and there is a smooth tame family of linear maps $$
    V\colon (\mathcal{U}\subset \mathcal{F}\times \mathcal{G})\times \mathcal{H}\longrightarrow \mathcal{F}
$$
such that for every $(f,g)\in \mathcal{U}$ and $h\in \mathcal{H}$, we have
$$
    D_f A(f,g)V(f,g)h=h.
$$
Then if $A(f_0,g_0)=0$ for some $(f_0,g_0)\in \mathcal{U}$ we can find neighborhoods $\mathcal{V}_1\subset \mathcal{F}$ and $\mathcal{V}_2\subset \mathcal{G}$ of $f_0$ and $g_0$, respectively, such that for all $g\in \mathcal{V}_2$ we can find some $f\in \mathcal{V}_1$ with $A(f,g)=0$. Moreover the solution $f=B(g)$ is defined by a smooth tame map $B$.
\end{theorem}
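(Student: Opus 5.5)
The plan is to deduce the statement from the version of the Nash--Moser theorem that only requires a smooth tame family of \emph{right} inverses (Theorem III.3.1.1 in \cite{hamilton82}), rather than from Theorem~\ref{thm: nash moser} as stated above. Theorem~\ref{thm: nash moser} assumes that $DP(f)h=k$ has a \emph{unique} solution. Here $D_fA$ is only surjective, so the uniqueness hypothesis fails in general. The surjective version concludes that $P$ is locally surjective and admits a smooth tame local right inverse $S$ with $P\circ S=\id$. This is precisely what a solution map $g\mapsto B(g)$ needs.

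\textbf{Step 1: the auxiliary map.} Set
$$P\colon (\mathcal{U}\subset\mathcal{F}\times\mathcal{G})\longrightarrow \mathcal{H}\times\mathcal{G},\qquad P(f,g):=\bigl(A(f,g),g\bigr).$$
Products of tame Fréchet spaces are tame, and $P$ is smooth tame because $A$ is and the second component is linear. Its derivative is
$$DP(f,g)\{\delta f,\delta g\}=\bigl(D_fA(f,g)\delta f+D_gA(f,g)\delta g,\ \delta g\bigr).$$

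\textbf{Step 2: a right inverse for $DP$.} Define
$$W(f,g)\{h,k\}:=\bigl(V(f,g)\bigl(h-D_gA(f,g)k\bigr),\ k\bigr).$$
Using $D_fA\,V=\id$, a direct computation gives $DP(f,g)\,W(f,g)\{h,k\}=(h,k)$. The family $W$ is smooth tame. Indeed, it is built from the smooth tame family $V$, the first derivative $D_gA$ (which is smooth tame because $A$ is), and linear operations. Compositions of smooth tame maps are smooth tame by the chain rule and the tame estimates, cf.\ Lemma~\ref{lem: tame estimate family lin maps}.

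\textbf{Step 3: solving the equation.} Apply the right-inverse Nash--Moser theorem at the point $(f_0,g_0)$, where $P(f_0,g_0)=(0,g_0)$. This gives neighborhoods and a smooth tame map $S$, defined near $(0,g_0)$ with values near $(f_0,g_0)$, such that $P\circ S=\id$ and $S(0,g_0)=(f_0,g_0)$. Write $S(h,g)=(S_1(h,g),S_2(h,g))$. The identity $P\circ S=\id$ forces $S_2(h,g)=g$ and $A(S_1(h,g),g)=h$. Now define $B(g):=S_1(0,g)$ for $g$ in a neighborhood $\mathcal{V}_2$ of $g_0$. Then $B$ is smooth tame, satisfies $A(B(g),g)=0$, and takes values in a neighborhood $\mathcal{V}_1$ of $f_0$ by continuity of $S$ and $S(0,g_0)=(f_0,g_0)$.

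\textbf{Main obstacle.} The main obstacle is the right-inverse Nash--Moser theorem itself. If one wanted to reprove it rather than cite it, the natural route is Hamilton's modified Newton scheme. One constructs $f_{j+1}=f_j-S_{\theta_j}V(f_j)(P(f_j)-k)$ with smoothing operators $S_\theta$ on the tame space $\mathcal{F}$. Convergence in every norm then follows from the tame estimate of Lemma~\ref{lem: tame estimate family lin maps} together with the quadratic remainder from Theorem~\ref{thm: taylor}, via interpolation of the tame norms. Smooth tameness of the resulting right inverse comes from differentiating $P\circ S=\id$ and expressing the derivative as $DS=W(S)$. Since this part is standard and identical to \cite[III.3]{hamilton82}, I would cite it and only carry out Steps 1--3 explicitly.
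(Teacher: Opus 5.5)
Your proof is correct. The paper gives no proof of this statement; it simply imports it from Hamilton (Theorem III.3.3.1). So the comparison is between citing the implicit function theorem as a black box and what you do. You shrink the black box to the right-inverse (surjective) version of the Nash--Moser theorem and derive the rest through the augmented map $P(f,g)=(A(f,g),g)$, using the explicit right inverse $W(f,g)\{h,k\}=\bigl(V(f,g)(h-D_gA(f,g)k),k\bigr)$. This is the standard derivation. It also makes visible that only a right inverse is used, so you get existence of $f$ but no uniqueness. That is exactly why the paper later has to invoke Theorem~\ref{thm: nash moser} separately to get local uniqueness of $\xi(\omega)$. It is also why, as you note, Theorem~\ref{thm: nash moser} itself cannot be applied to $P$ when $D_fA$ has a kernel.

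One correction concerns your closing paragraph. The smooth tameness of the right inverse $S$ does not come from differentiating $P\circ S=\id$. That identity only gives $DP(S(k))\,DS(k)=\id$, which determines $DS(k)$ only modulo $\ker DP(S(k))$. In general there is no $S$ at all with $DS=W\circ S$, because that would force $D^2S(k)\{a,b\}=DW(S(k))\{W(S(k))a\}b$ to be symmetric in $a,b$. This is an integrability condition that a general family of right inverses does not satisfy.

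The argument via $D(P^{-1})=VP\circ P^{-1}$ works only in the invertible case. In the surjective case, the smooth tame dependence of $S$ is part of what the cited theorem asserts, and it has to come out of the Nash--Moser construction itself. Since you rely on the citation rather than on this heuristic, Steps 1--3 stand as written.
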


\subsubsection{Smooth tame families of elliptic linear partial differential operators}

The following theorem appears in a very similar form in~\cite{hamilton82} as Theorem~II.3.3.3. 
\begin{theorem}[Theorem~II.3.3.3 in \cite{hamilton82}]
\label{thm: elliptic pdo}
    Let $V\rightarrow M$, $W\rightarrow M$ be vector bundles over some compact manifold $M$. Let moreover $\mathcal{F}$ be a tame Fréchet space, $\mathcal{U}\subset \mathcal{F}$ an open subset and let $V_1$, $V_2$ be finite dimensional vector spaces together with continuous linear maps
    $$
        i\colon V_1\rightarrow \Gamma(M,W),\quad  j\colon\Gamma(M,V)\rightarrow  V_2.
    $$
    Consider a smooth tame family of linear maps
    $$
        \ell\colon \mathcal{U}\times \Gamma(M,V)\longrightarrow \Gamma(M,W)
    $$
    such that for every $f\in \mathcal{U}$, $\ell(f)$ is an elliptic linear partial differential operator of degree $r$. Suppose that for some norm on $V_2$ we have $\Vert j(h)\Vert\leq C\Vert h\Vert_{r-1}$ for every $h\in \Gamma(M,V)$. We define the family of linear maps 
    \begin{align*}
         \tilde{\ell}\colon \mathcal{U}\times\Gamma(M,V)\times V_1&\longrightarrow \Gamma(M,W)\times V_2\\
         (f,h,x)&\longmapsto \tilde{\ell}(f)\{h,x\}:= \left(\ell(f)h+i(x),j(h)\right).
    \end{align*}   
    If for some $f_0\in \mathcal{U}$ the linear map $\tilde{\ell}(f_0)$ is invertible, there is an open neighborhood $\tilde{\mathcal{U}}\subset \mathcal{U}$ of $f_0$ such that $\tilde{\ell}(f)$ is an isomorphism for every $f\in\tilde{\mathcal{U}}$ and the corresponding family of inverses is a smooth tame family of linear maps.
\end{theorem}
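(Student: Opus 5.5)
The plan is to reduce the statement to Proposition~\ref{prop: inverse family is smooth tame}. The family $\tilde\ell$ is a smooth tame family of linear maps, since $\ell$ is one and $i$, $j$ are continuous linear maps with finite-dimensional domain or target. So it suffices to prove three things for every $f$ near $f_0$: that $\tilde\ell(f)$ is invertible, and that the family of inverses $V(f)\{k,y\}$ is both tame and continuous. I would argue in three steps: first at the Banach (Sobolev) level, then regularity, then tame estimates.

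\emph{Step 1: invertibility near $f_0$.} I would extend $\tilde\ell(f)$ to a bounded map $W^{r,2}(V)\times V_1\to W^{0,2}(W)\times V_2$. The extension exists because $\ell(f)$ has order $r$, $i$ has finite-dimensional domain, and the hypothesis $\Vert j(h)\Vert\le C\Vert h\Vert_{r-1}$ lets $j$ extend to $W^{r,2}$, even compactly. Since $\ell(f_0)$ is elliptic on compact $M$, it is Fredholm. Then $\tilde\ell(f_0)$ is a finite-rank modification of $\ell(f_0)\oplus 0$, hence Fredholm, and it is injective on smooth sections. Elliptic regularity shows that its kernel and cokernel in the Sobolev setting consist of smooth elements, so it is an isomorphism of Banach spaces. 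The coefficients of $\ell(f)$ depend continuously on $f$ in some $C^{k}$-norm, with $f$ measured in some $\Vert\cdot\Vert_{b'}$, by continuity and tameness of $\ell$. Hence $\tilde\ell(f)$ is close to $\tilde\ell(f_0)$ in operator norm for $f$ in a $\Vert\cdot\Vert_{b'}$-neighborhood $\tilde{\mathcal U}$ of $f_0$, so it is a Banach isomorphism with uniformly bounded inverse. Elliptic regularity for $\ell(f)$ then shows that smooth data give smooth solutions. Therefore $\tilde\ell(f)$ is bijective on $\Gamma(M,V)\times V_1$, with $\Vert h\Vert_{r}+\Vert x\Vert\le C(\Vert k\Vert_0+\Vert y\Vert)$ uniformly on $\tilde{\mathcal U}$.

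\emph{Step 2: tame estimate for the inverse.} This is the main obstacle. I need an elliptic a priori estimate whose constants depend tamely on the coefficients:
\begin{align*}
\Vert h\Vert_{n+r}\le C(n)\bigl(\Vert \ell(f)h\Vert_n+\Vert h\Vert_0+\Vert f\Vert_{n+s}\Vert h\Vert_{b+r}\bigr),\qquad f\in\tilde{\mathcal U}.
\end{align*}
I would prove it by induction on $n$. The base case is the uniform Gårding/Schauder estimate. For the inductive step I would differentiate the equation covariantly, so that $D h$ solves an equation of the form $\ell(f)(Dh)=D(\ell(f)h)-[D,\ell(f)]h$. I would then bound the commutator by Lemma~\ref{lem: tame estimate family lin maps} applied to the derivative family, and use interpolation inequalities to absorb the intermediate norms. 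Plugging in $h$ with $\ell(f)h=k-i(x)$ and the Step~1 bound for the low norms gives
\begin{align*}
\Vert V(f)\{k,y\}\Vert_{n+r}\le C(n)\bigl(\Vert k\Vert_n+\Vert y\Vert+\Vert f\Vert_{n+s}(\Vert k\Vert_b+\Vert y\Vert)\bigr).
\end{align*}
Here the term $\Vert i(x)\Vert_n\le C\Vert x\Vert$ is harmless because $V_1$ is finite dimensional. This is a tame estimate for $V$ as a map.

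\emph{Step 3: continuity and conclusion.} From
\begin{align*}
V(f)-V(g)=V(f)\bigl(\tilde\ell(g)-\tilde\ell(f)\bigr)V(g)
\end{align*}
and the tame bounds of Step 2, together with continuity of $\ell$, the map $V$ is jointly continuous on $\tilde{\mathcal U}\times\Gamma(M,W)\times V_2$ in every norm $\Vert\cdot\Vert_n$. Proposition~\ref{prop: inverse family is smooth tame} then yields that $V$ is a smooth tame family of linear maps.
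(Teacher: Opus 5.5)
The paper gives no proof of its own here; it quotes the result from Hamilton. Your outline is the standard argument behind Hamilton's theorem and is correct in structure: uniform Sobolev-level invertibility near $f_0$ by operator-norm perturbation, then a tame elliptic a priori estimate for the family, then continuity from $V(f)-V(g)=V(f)\bigl(\tilde\ell(g)-\tilde\ell(f)\bigr)V(g)$, and finally Proposition~\ref{prop: inverse family is smooth tame}.

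Two details need small repairs. First, Sobolev-level surjectivity of $\tilde\ell(f_0)$ should come from closed range plus density of $\Gamma(M,W)\times V_2$ in the range. Elliptic regularity does not directly apply to the annihilator of the range, since $j^{*}$ takes values in distributions. Second, the factor $\Vert h\Vert_{b+r}$ in Step~2 needs a separate uniform, non-tame elliptic estimate at the fixed level $b$; the $\Vert h\Vert_r$ bound from Step~1 does not control it. The interpolation there is best done on the coefficients of $\ell(f)$ in $\Gamma$, rather than on $f$ in the abstract space $\mathcal{F}$.
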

The following application of Theorem~\ref{thm: elliptic pdo} will be used later.
\begin{proposition}
\label{prop: inverse family from elliptic}
    Let $\mathcal{U}\subset \mathcal{F}$ be an open neighborhood of $0$ in a tame Fréchet space $\mathcal{F}$. Let $M$ be a compact complex manifold and
    $$A\colon \mathcal{U}\times \A^{0,p}(M,T^{1,0})\longrightarrow \A^{0,p}(M,T^{1,0})$$
    a smooth tame family of linear maps such that
    $$A(\omega)\alpha=\alpha+\varphi(\omega)\alpha,$$
    where $\varphi(\omega)$ is 
    such that $\alpha\mapsto\Delta \varphi(\omega)\alpha$ is a linear partial differential operator of degree $\leq2$, $\varphi(\omega)\alpha\in \im \dbar^*$ for every $(\omega,\alpha)\in \mathcal{U}\times \A^{0,p}(M,T^{1,0})$ and such that $\varphi(0)=0$. Then there is an open neighborhood $\mathcal{U}'\subset \mathcal{U}$ of $0$ such that the restriction
    $$A(\omega)\colon \left(\im \dbar^*\cap \A^{0,p}(M,T^{1,0})\right)\longrightarrow \left(\im \dbar^*\cap \A^{0,p}(M,T^{1,0})\right)$$
    is invertible for every $\omega \in \mathcal{U}'$ and the family of inverses is smooth tame. It follows that $A(\omega)$ is invertible for every $\omega\in \mathcal{U}'$ and the family of inverses is smooth tame.
\end{proposition}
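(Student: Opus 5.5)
The plan is to apply Theorem~\ref{thm: elliptic pdo} to the family $\ell(\omega):=\Delta A(\omega)=\Delta+\Delta\varphi(\omega)$, a second-order linear differential operator on $\A^{0,p}(M,T^{1,0})$. We correct its Fredholm defect by harmonic data. Set $V_1=V_2=\mathcal{H}^{0,p}(M,T^{1,0})$, which is finite dimensional, let $i$ be the inclusion of harmonic forms, and let $j=\Hproj$ be the $L^2$-projection. Then $\Vert j(h)\Vert\le C\Vert h\Vert_{0}\le C\Vert h\Vert_{r-1}$ with $r=2$.

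Consider $\tilde\ell(\omega)\{h,x\}=(\Delta A(\omega)h+x,\Hproj h)$. At $\omega=0$ this is $(\Delta h+x,\Hproj h)$, which is invertible by Hodge theory. Indeed, given $(k,y)$, set $x=\Hproj k$ and $h=\G k+y$; uniqueness follows because $\im\Delta\perp\mathcal{H}$. The family $\omega\mapsto\Delta A(\omega)$ is smooth tame, since $A$ is and $\Delta$ is tame linear. Its principal symbol at $\omega$ is that of $\Delta$ plus a perturbation depending continuously on $\omega$ through its $C^2$-jet. So after shrinking $\mathcal{U}$ it stays elliptic; this is the point needing the most care, and I would argue it via continuity of the symbol coefficients in a $C^0$ sense of finitely many derivatives of $\omega$. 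Theorem~\ref{thm: elliptic pdo} then gives a neighborhood $\mathcal{U}'$ on which $\tilde\ell(\omega)$ is an isomorphism with a smooth tame family of inverses $\tilde V(\omega)$.

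Next we extract the inverse of $A(\omega)$ on $\im\dbar^*$. Since $\varphi(\omega)$ takes values in $\im\dbar^*$, $A(\omega)$ preserves $\im\dbar^*$. For injectivity, suppose $\alpha\in\im\dbar^*$ and $A(\omega)\alpha=0$. Then $\Hproj\alpha=0$ and $\tilde\ell(\omega)\{\alpha,0\}=0$, so $\alpha=0$. For surjectivity, given $\beta\in\im\dbar^*$, solve $\tilde\ell(\omega)\{h,x\}=(\Delta\beta,0)$, which yields $\Delta A(\omega)h+x=\Delta\beta$ with $\Hproj h=0$. Projecting harmonically gives $x=0$, since $\Hproj\Delta=0$. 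Hence $A(\omega)h-\beta$ is harmonic. On the other hand, $A(\omega)h-\beta=h-\beta+\varphi(\omega)h$ lies in $\ker\Hproj$, so it vanishes.

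It remains to check $h\in\im\dbar^*$. Write $h=\dbar a+\dbar^*b$. Then $A(\omega)h=\beta$ gives $\dbar a=\beta-\dbar^*b-\varphi(\omega)h\in\im\dbar^*$, so $\dbar a=0$ by orthogonality of the Hodge decomposition. The inverse is thus $\beta\mapsto\pr_1\tilde V(\omega)\{\Delta\beta,0\}$, a composition of smooth tame maps, hence smooth tame.

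Finally, for invertibility on all of $\A^{0,p}$, decompose $\alpha=\alpha'+\alpha''$ with $\alpha'=(\Hproj+\dbar\Q)\alpha\in\ker\dbar^*$ and $\alpha''=\Q\dbar\alpha\in\im\dbar^*$. The equation $A(\omega)\alpha=\gamma$ then splits. Its $\ker\dbar^*$-component reads $\alpha'=(\Hproj+\dbar\Q)\gamma$, because $\varphi(\omega)\alpha$ has no component there. Its $\im\dbar^*$-component reads $A(\omega)\alpha''=\Q\dbar\gamma-\varphi(\omega)\alpha'$. This gives the explicit inverse
\[
A(\omega)^{-1}\gamma=(\Hproj+\dbar\Q)\gamma+A(\omega)|_{\im\dbar^*}^{-1}\bigl(\Q\dbar\gamma-\varphi(\omega)(\Hproj+\dbar\Q)\gamma\bigr).
\]
This is smooth tame, since $\Hproj$, $\Q$ and $\dbar$ are tame linear (Example~\ref{ex: Q}) and $\varphi(\omega)=A(\omega)-\id$ is smooth tame.
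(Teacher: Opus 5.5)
Your proposal is correct and follows the paper's route. You augment $\ell(\omega)=\Delta A(\omega)$ by the harmonic data ($i$ the inclusion, $j=\Hproj$), invoke Theorem~\ref{thm: elliptic pdo} via the Hodge-theoretic invertibility of $\tilde\ell(0)$, extract the inverse on $\im\dbar^*$ as $\beta\mapsto\pr_1\tilde\ell(\omega)^{-1}\{\Delta\beta,0\}$, and extend to all of $\A^{0,p}(M,T^{1,0})$ through the Hodge splitting, exactly as in the paper's last step. Your explicit injectivity/surjectivity check on $\im\dbar^*$ (including the verification that the solution $h$ lies in $\im\dbar^*$) is more careful than the paper's terse argument at that point.
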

\begin{proof}
    Since the restriction $$\Delta\colon \im \dbar^*\longrightarrow\im \dbar^*$$ is invertible, solving $A(\omega)\alpha=\beta$ for $\alpha\in \im \dbar^*$, where $\beta\in \im \dbar^*$ is equivalent to solving $$\Delta A(\omega)\alpha=\Delta \alpha+\Delta \varphi(\omega)\alpha=\Delta \beta=:\tilde{\beta}$$ for $\alpha\in \im \dbar^*$. We define the smooth tame family of linear maps
    \begin{align*}
        \ell\colon \mathcal{U}\times \A^{0,p}(M,T^{1,0})&\longrightarrow \A^{0,p}(M,T^{1,0}),\\
        (\omega,\alpha)&\longmapsto \Delta\alpha+\Delta \varphi(\omega)\alpha.
    \end{align*}
    Since $\varphi(0)=0$, we observe that $\ell(0)=\Delta$ and therefore, there is an open neighborhood $\mathcal{U}'\subset \mathcal{U}$ of $0$ such that $\ell\colon \mathcal{U}'\times \A^{0,p}(M,T^{1,0})$ is a smooth tame family of elliptic operators of degree 2. We define
    \begin{align*}
        \tilde{\ell}\colon \mathcal{U}'\times \A^{0,p}(M,T^{1,0})\times \mathcal{H}^{0,p}(M,T^{1,0})&\longrightarrow \A^{0,p}(M,T^{1,0})\times \mathcal{H}^{0,p}(M,T^{1,0})\\
        (\omega,\alpha,\delta)&\longmapsto \left(\ell(\omega)\alpha+i(\delta),j(\alpha)\right),
    \end{align*}
    where $$i\colon \mathcal{H}^{0,p}(M,T^{1,0})\hookrightarrow \A^{0,p}(M,T^{1,0}),\quad j\colon \A^{0,p}(M,T^{1,0})\rightarrow \mathcal{H}^{0,p}(M,T^{1,0}),$$ denote the inclusion and orthogonal projection map, respectively. Let $\Vert\cdot \Vert$ be the norm on $\mathcal{H}^{0,p}(M,T^{1,0})$ induced by $\Vert\cdot\Vert_{0,2}$ on $\A^{0,p}(M,T^{1,0})$. Then by the Hodge decomposition $$\Vert j(\alpha)\Vert\leq \Vert \alpha\Vert_{0,2}\leq \Vert \alpha \Vert_{1,2}.$$ Since $\ell(0)=\Delta$ and $\ker \Delta=\mathcal{H}^{0,p}(M,T^{1,0})=\coker \Delta$, we see from the definition of $\tilde{\ell}$ that $\tilde{\ell}(0)$ is invertible. Therefore, by Theorem~\ref{thm: elliptic pdo} it follows that there is a neighborhood $\mathcal{U}''\subset \mathcal{U}'$ such that $\tilde{\ell}(\omega)$ is invertible for every $\omega\in \mathcal{U}''$ and the family $\tilde{p}(\omega)$ of inverses is smooth tame. 

Next we show that as a consequence the restriction $\ell (\omega) : \im \dbar^*\to \im \dbar^*$ is invertible.       
    Since 
    \[ \ell (\omega)|_{\im \dbar^*}= \tilde{\ell}(\omega)|_{\im \dbar^*\oplus0}: \im \dbar^*\oplus0\to \im \dbar^*\oplus0,\] $\ell(\omega)(\im \dbar)\subset \im \dbar\oplus \im \dbar^*$ and $\ell(\omega)(\im \dbar)\cap \im \dbar^*=0$, it follows that $\ell (\omega) : \im \dbar^*\to \im \dbar^*$ is invertible and its family of inverses is a smooth tame family of linear maps given by the restriction 
      $$ p(\omega):= \tilde{p}(\omega)|_{\im \dbar^*\oplus0}\colon \im \dbar^*\cap \A^{0,p}(M,T^{1,0}) \longrightarrow \im \dbar^*\cap \A^{0,p}(M,T^{1,0}).$$
As explained above this implies the invertibility of $A(\omega )|_{\im \dbar^*}: \im \dbar^* \to \im \dbar^*$ with a smooth tame family of inverses. 

    For the last assertion, given $\alpha\in \A^{0,p}(M,T^{1,0})$, $\beta\in \A^{0,p}(M,T^{1,0})$, we write 
    $$\alpha=\alpha_0+\dbar \alpha_1+\dbar^*\alpha_2,\quad \beta=\beta_0+\dbar \beta_1+\dbar^*\beta_2,$$ where $\alpha_0,\beta_0\in \mathcal{H}^{0,p}(M,T^{1,0})$. Then the equation
    $A(\omega)\alpha=\beta$ is equivalent to
    \begin{align*}
        \alpha_0=\beta_0,\quad \dbar\alpha_1=\dbar \beta_1,\quad \dbar^*\alpha _2+\varphi(\omega)\dbar^*\alpha_2=\dbar^*\beta_2-\varphi(\omega)\{\alpha_0+\dbar \alpha_1\}\\
        \quad \implies A(\omega)\dbar^*\alpha_2=\dbar^*\beta_2-\varphi(\omega)\{\beta_0+\dbar \beta_1\},
    \end{align*}
    which has a smooth tame family of solutions by the previous statement.
\end{proof}

\subsection{Tame Fréchet manifolds and tame Lie groups}

\begin{definition}
    A \emph{tame Fréchet manifold} is a Hausdorff topological space with an atlas of coordinate charts taking values in tame Fréchet spaces such that the transition maps are smooth tame maps. A map $P\colon \mathcal{M}\rightarrow\mathcal{N}$ between tame Fréchet manifolds is called a \emph{smooth tame} map of Fréchet manifolds if there exist charts around every point in $\mathcal{M}$ and its image in $\mathcal{N}$ such that the local representative of $P$ in these charts is a smooth tame map of graded Fréchet spaces.
\end{definition}

\begin{definition}
    A \emph{tame Lie group} is a tame Fréchet manifold $\mathcal{G}$ with a group structure such that the multiplication and the inversion map are smooth tame maps. 
\end{definition}

The diffeomorphism group of a compact manifold is a tame Fréchet Lie group \cite{hamilton82}. We will explicitly describe a tame atlas in the following section. Another example of a tame Lie group are the automorphisms of a vector bundle over a compact manifold (cf.\ Proposition~\ref{prop: comp vb homs sm tame}).

\subsubsection{The diffeomorphism group of a compact (complex) manifold}
\label{sect: chart diff(M)}

Let $M$ be a compact manifold. Fix an auxiliary Riemannian metric $h$ on $M$. Then we denote by
$$
    \exp^h\colon TM\longrightarrow M
$$
the exponential map with respect to $h$ and by $\pi\colon TM\rightarrow M$ the canonical projection. There is a tubular neighborhood $U\subset TM$ of $M\subset TM$ on which the extended map
\begin{align*}
    \Exp^h\colon TM&\longrightarrow M\times M\\
    v&\longmapsto \left(\pi(v),\exp^h_{\pi(v)}(v)\right)
\end{align*}
restricts to a diffeomorphism. This gives rise to a homeomorphism from a neighborhood $\mathcal{U}_{\id}\subset \X(M)$ of 0 to a neighborhood $\mathcal{V}_{\id}\subset \Diff(M)$ of $\id\in\Diff(M)$, which we will also denote by 
$$
    \exp^h\colon \mathcal{U}_{\id}\longrightarrow \mathcal{V}_{\id}.
$$
Similarly, for $f\in \Diff(M)$ and some neighborhood $\mathcal{U}_f\subset \Gamma(M,f^*TM)$ of 0 in $\Gamma(M,f^*TM)$, we obtain a homeomorphism
$$
    \exp^h\colon \mathcal{U}_f\longrightarrow \mathcal{V}_f
$$
to a neighborhood $\mathcal{V}_f\subset \Diff(M)$ of $f$. 
\begin{proposition}[Theorem III.2.3.1 and Corollary III.2.3.2 in \cite{hamilton82}]
The local charts $\left(\mathcal{V}_f,{(\exp^h)}^{-1}\right)$ form a smooth tame atlas of $\Diff(M)$. In particular, we may identify $T_f \Diff(M)\cong \Gamma(M,f^*TM)$.  
\end{proposition}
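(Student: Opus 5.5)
The plan is to reduce the statement to Hamilton's general result on spaces of smooth maps. The first step is to describe the chart map explicitly. For $f\in\Diff(M)$ and $\xi\in\mathcal{U}_f\subset\Gamma(M,f^*TM)$ the chart is $\exp^h(\xi)(x)=\exp^h_{f(x)}(\xi(x))$. We shrink $\mathcal{U}_f$ so that $\xi(x)$ lies in the tubular neighborhood $U$ for every $x$. We also require the resulting map to be $C^1$-close to $f$, so that it is again a diffeomorphism, since $\Diff(M)$ is $C^1$-open in $C^\infty(M,M)$. The image $\mathcal{V}_f$ is then open, and $\exp^h$ is a bijection onto it, because $\Exp^h|_U$ is a diffeomorphism onto its image. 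Its inverse is $g\mapsto (x\mapsto (\Exp^h|_U)^{-1}(f(x),g(x)))$. The model spaces $\Gamma(M,f^*TM)$ are tame Fréchet spaces by \cite[Cor II.1.3.9]{hamilton82}.

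The second step is to check that the transition maps are smooth tame. Take overlapping charts at $f_1$ and $f_2$. The transition map sends $\xi\in\Gamma(M,f_1^*TM)$ to the section
\[
x\mapsto (\Exp^h|_U)^{-1}\bigl(f_2(x),\exp^h_{f_1(x)}\xi(x)\bigr)\in T_{f_2(x)}M .
\]
Pointwise, this is a smooth fiber-preserving map $\Phi$ from an open subset of the bundle $f_1^*TM$ to $f_2^*TM$, defined by $\Phi_x(v)=(\Exp^h|_U)^{-1}(f_2(x),\exp^h_{f_1(x)}v)$. It depends smoothly on $(x,v)$ because $f_1$, $f_2$, $\exp^h$ and $(\Exp^h|_U)^{-1}$ are all smooth. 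So the transition map is a non-linear vector bundle operator (order zero), and Proposition~\ref{prop: nonlin part smooth tame} shows it is smooth tame. This is the key observation: once the base points are fixed, composition only happens with the fixed smooth maps $\exp^h$ and $(\Exp^h)^{-1}$, never with the unknown section.

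The third step is the tangent space identification. The chart at $f$ sends $0$ to $f$, and $\tfrac{d}{dt}\big|_{t=0}\exp^h_{f(x)}(t\xi(x))=\xi(x)$. Hence the derivative of the chart at $0$ identifies $T_f\Diff(M)$ with $\Gamma(M,f^*TM)$, and this identification is compatible with the transitions because $D\Phi_x(0)$ is the identity whenever $f_1=f_2$.

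The main obstacle is the Hausdorff and openness bookkeeping. In particular, one must show that the domain of the transition map is open in the Fréchet (even $C^0$) topology and that $\mathcal{V}_f$ is open in $\Diff(M)$ with the $C^\infty$ topology. I would handle this with the $C^0$-openness condition that $(f_2(x),g(x))\in\Exp^h(U)$ for all $x$, using compactness of $M$. The tameness itself needs nothing beyond the vector bundle operator proposition.
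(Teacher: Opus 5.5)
Your proposal is correct and takes essentially the same approach as the paper, which gives no proof of its own but cites Hamilton: exponential charts on the $C^1$-open subset $\Diff(M)\subset C^\infty(M,M)$, with transition maps that are order-zero non-linear vector bundle operators and hence smooth tame by Proposition~\ref{prop: nonlin part smooth tame}. One small point: your compatibility remark for $f_1=f_2$ is vacuous, since the transition is then the identity. What actually makes $T_f\Diff(M)\cong\Gamma(M,f^*TM)$ canonical is that $d(\exp^h_{f(x)})_0=\id$, so the chart centered at $f$ agrees with the intrinsic map sending a curve $g_t$ through $f$ to $x\mapsto\tfrac{d}{dt}\big|_{t=0}g_t(x)$.
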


For $f\in \Diff(M)$ we denote by $L_f$ and $R_f$ the left- and right-multiplication by $f$, respectively, and by $I$ the inversion map. 

\begin{proposition}[Example I.4.4.5-6 in \cite{hamilton82}]
\label{prop: DR DL and DI}
For $f,g\in \Diff(M)$, $X\in \Gamma(M,g^*TM)$ we have 
\begin{align*}
    DL_f(g)X=df \circ X,\quad DR_f(g)X=X\circ f,\quad DI(g)X=-dg^{-1}\circ X\circ g^{-1}.
\end{align*}
\end{proposition}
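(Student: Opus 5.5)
The three formulas follow from differentiating along curves in the chart $\exp^h$, using that a tangent vector $X\in T_g\Diff(M)\cong\Gamma(M,g^*TM)$ is the velocity $X=\frac{\dd}{\dd t}\big|_{t=0}g_t$ of a smooth curve $g_t$ of diffeomorphisms with $g_0=g$. The plan is to compute each derivative pointwise on $M$ and then check that the resulting expressions agree with the directional derivative in the chart. Concretely, I would take $g_t:=\exp^h\circ(tX)$, so that $g_t(x)=\exp^h_{g(x)}(tX(x))$. This curve lies in $\mathcal{V}_g$ for small $t$, and it corresponds in the chart to the straight line $t\mapsto tX$, whose velocity at $t=0$ is $X$.

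For left multiplication, $L_f(g_t)(x)=f(g_t(x))$. For each fixed $x$, the ordinary chain rule on the finite-dimensional manifold $M$ gives $\frac{\dd}{\dd t}\big|_0 f(g_t(x))=df_{g(x)}X(x)$, so $DL_f(g)X=df\circ X$, a section of $(f\circ g)^*TM$. For right multiplication, $R_f(g_t)(x)=g_t(f(x))$, and differentiating gives $X(f(x))$, so $DR_f(g)X=X\circ f$. For inversion, I differentiate the identity $g_t\circ g_t^{-1}=\id$. Set $Y:=\frac{\dd}{\dd t}\big|_0 g_t^{-1}$. Differentiating $g_t(g_t^{-1}(y))=y$ at $t=0$ gives
\begin{align*}
X(g^{-1}(y))+dg_{g^{-1}(y)}Y(y)=0,
\end{align*}
so $Y=-dg^{-1}\circ X\circ g^{-1}$. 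Equivalently, applying the chain rule for Fréchet maps to $L_g\circ I\circ\ldots$ yields the same identity $DL_{g}(g^{-1})DI(g)X+DR_{g^{-1}}(g)X=0$.

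The main obstacle is justifying that these pointwise derivatives really compute the Fréchet directional derivatives in the charts. That is, the difference quotients must converge in every $C^k$ norm, not merely pointwise. I would handle this by observing that in local charts each map is given by composing with fixed smooth maps, namely $\exp^h$, $(\Exp^h)^{-1}$ and $f$. These compositions are non-linear vector bundle operators, which are smooth by Proposition~\ref{prop: nonlin part smooth tame}. Their derivatives are given by fibrewise differentiation, which agrees with the pointwise computation. For inversion, I would instead use that $I$ is smooth, since $\Diff(M)$ is a tame Lie group. Then the chain rule applied to $L_g\circ I=R_{g^{-1}}$-type identities, or directly to $\mu(g,I(g))=\id$, reduces $DI$ to the already computed derivatives of the multiplication.
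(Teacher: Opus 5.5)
The paper gives no proof of this proposition; it is quoted from Hamilton, so there is no in-text argument to compare your route against. Your argument is correct and is the standard derivation. You take smoothness of $L_f$, $R_f$ and $I$ as input. That is legitimate here, since the paper asserts (citing Hamilton) that $\Diff(M)$ is a tame Lie group before stating the proposition. Once this is granted, the ``main obstacle'' you identify disappears. If a local representative $\tilde P$ with $\tilde P(0)=0$ is $C^1$, then $\tfrac{1}{t}\tilde P(tX)$ converges in the Fr\'echet topology. Since evaluation at a point of $M$ is continuous and linear, the limit is the pointwise $t$-derivative, which you compute correctly using $d(\exp^h_p)_0=\id$.

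Two remarks in your justification are inaccurate, though harmless.

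First, the local representative of $R_f$ is not a non-linear vector bundle operator in the sense of the paper's definition. Since $(\exp^h\circ Y)\circ f=\exp^h\circ(Y\circ f)$, in the charts centred at $g$ and $g\circ f$ it is the map $Y\mapsto Y\circ f$. This evaluates $Y$ at $f(x)$ rather than at $x$, which is exactly the phenomenon the paper points out in its remark on $\mathcal{E}$. The map is, however, continuous and linear, hence equal to its own derivative, and this gives $DR_f(g)X=X\circ f$ at once.

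Second, the identity $DL_g(g^{-1})DI(g)X+DR_{g^{-1}}(g)X=0$ that you state is right, but it does not come from anything like $L_g\circ I=R_{g^{-1}}$. That identity is false: the two sides send $k$ to $g\circ k^{-1}$ and to $k\circ g^{-1}$, respectively. It comes from differentiating $\mu(g,I(g))=\id$ with the two-variable multiplication $\mu$, which is jointly $C^1$ by the Lie group property. Its partial derivatives at $(g,g^{-1})$ are $DR_{g^{-1}}(g)$ and $DL_g(g^{-1})$, so this yields $dg\circ DI(g)X+X\circ g^{-1}=0$, which is precisely your pointwise identity.
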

The Lie algebra of the diffeomorphism group is given by $(\X(M),-[\cdot,\cdot])$, where $[\cdot,\cdot]$ is the usual Lie bracket of vector fields. The adjoint action of $\Diff(M)$ on its Lie algebra is given by
\begin{align*}
    \Ad\colon \Diff(M)\times \X(M)&\longrightarrow \X(M)\\
    (f,X)&\longmapsto \Ad(f)X:=df\circ X\circ f^{-1}.
\end{align*}

In our case of interest, the compact manifold is equipped with a complex structure $J$. We will often use the isomorphism of (real) vector bundles 
\begin{align*}
    \Psi\colon T^{1,0}M&\longrightarrow TM \\
    \xi&\longmapsto \xi+\bar{\xi},
\end{align*}
in order to identify $T_f\Diff(M)\cong \Gamma\left(M,f^*T^{1,0}M\right)$.

\begin{lemma}
\label{lem: lemma for approx f on J} 
For $\xi\in\Psi^{-1}(\mathcal{U}_{\id})$ we denote $F_{\xi}:=\exp^h\circ \Psi(\xi)\in\Diff(M)$.
Then 
$$
        \Ad(F_{\xi})X=X-[\Psi\xi,X]+ \tilde{R}(\xi)X,
$$
    where $\tilde{R}(s\xi)=s^2\tilde{R}(\xi,s)$, for a real parameter~$s$, where $\tilde{R}(\xi,s)$ depends smoothly on~$s$ for small~$s$.
\end{lemma}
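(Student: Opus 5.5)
We apply the Taylor formula of Theorem~\ref{thm: taylor} to the map $\xi\mapsto \Ad(F_\xi)X$ at $\xi=0$, with $X\in\X(M)$ fixed. First we check that this map is smooth (indeed smooth tame) on $\Psi^{-1}(\mathcal{U}_{\id})$. It is the composition of $\Psi$, which is a real-linear vector bundle isomorphism and hence tame linear, with the chart $\exp^h$ of the tame Lie group $\Diff(M)$, followed by $f\mapsto df\circ X\circ f^{-1}$. The last map is smooth tame because composition and inversion in $\Diff(M)$ are smooth tame, and because $f\mapsto df\circ X$ is a nonlinear differential operator in the chart (Proposition~\ref{prop: nonlin part smooth tame}). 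Theorem~\ref{thm: taylor} then gives
\[
\Ad(F_\xi)X=\Ad(F_0)X+D_\xi[\Ad(F_\xi)X]\big|_{\xi=0}\,\xi+\tilde R(\xi)X ,
\]
where $\tilde R(s\xi)=s^2\tilde R(\xi,s)$ and $\tilde R(\xi,s)$ depends smoothly on $s$. Since $F_0=\exp^h(0)=\id$, the zeroth-order term is $X$.

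It remains to show that the first-order term equals $-[\Psi\xi,X]$. We compute it along the curve $f_t:=F_{t\xi}=\exp^h(t\Psi\xi)$. This curve satisfies $f_0=\id$ and $\frac{d}{dt}\big|_{t=0}f_t=\Psi\xi=:Y$, because the derivative of the chart $\exp^h$ at the zero section is the identity, i.e.\ $\frac{d}{dt}\exp^h_p(tv)|_{t=0}=v$. We differentiate $df_t\circ X\circ f_t^{-1}$ at $t=0$ using the product and chain rule together with Proposition~\ref{prop: DR DL and DI}. This gives
\[
\frac{d}{dt}\Big|_{t=0}\Ad(f_t)X=\mathcal{L}_{-Y}X=-[Y,X].
\]
The cleanest justification is the classical identity $\frac{d}{dt}(f_t)_*X|_{t=0}=-\mathcal{L}_YX$ for any curve of diffeomorphisms through $\id$ with initial velocity $Y$. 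Only the first-order jet of $f_t$ enters this identity, so it holds for the exponential curve even though $f_t$ is not a flow. Concretely, in local coordinates $f_t(x)=x+tY(x)+O(t^2)$, so $df_t=\id+t\,dY+O(t^2)$ and $f_t^{-1}(x)=x-tY(x)+O(t^2)$. Hence
\[
(\Ad f_t X)(x)=X(x)+t\big(dY(X)-dX(Y)\big)(x)+O(t^2)=X(x)-t[Y,X](x)+O(t^2).
\]
The map is smooth, so the Gateaux derivative in direction $\xi$ equals this $t$-derivative, which gives the claimed linear term $-[\Psi\xi,X]$.

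The main obstacle is making sure that the first-order Taylor term really is $-[\Psi\xi,X]$ in the Fréchet sense rather than only formally. This comes down to two points: the smoothness established in the first step, so that Theorem~\ref{thm: taylor} applies, and the fact that the second-order behaviour of $\exp^h$, which depends on the metric $h$, does not affect the linear term. The coordinate computation above settles the second point, since the $O(t^2)$ terms of $f_t$ contribute only to $\tilde R$.
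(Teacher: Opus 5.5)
Your proposal is correct and follows essentially the same route as the paper, which also applies Theorem~\ref{thm: taylor} at $\xi=0$ and uses $D_0\exp^h=\id$ together with the fact that the derivative of $\Ad$ at $\id$ in direction $Y$ is $-[Y,X]$. The difference is only one of detail: you write out the smoothness of $\xi\mapsto\Ad(F_\xi)X$ and verify the linear term in local coordinates, both of which the paper leaves implicit.
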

\begin{proof}
The statement follows directly from Theorem~\ref{thm: taylor} using the facts that $D_0\exp^h=\id$ and 
$$
D_{f}\Ad(f=\id)\{X,Y\}=-[Y,X] 
$$
for $X,Y\in\X(M)$.
\end{proof}

\section{Diffeomorphisms acting on deformations}
\label{sect: diffeo action on defs}

The set of almost complex structures of $M$ can be seen as the space of sections $\mathrm{AC}(M)$ of a fiber subbundle of $\End TM$ and hence is a tame Frechet submanifold of $\Gamma(M,\End TM)$ \cite[Thm II.2.3.1]{hamilton82}.
The diffeomorphism group $\Diff (M)$ of $M$ acts on the  tame Fréchet manifold of almost complex structures on $M$ via
$$
    (f,J)\longmapsto J_{f}:= d f \circ J\circ d f^{-1}.
$$
This is a restriction of the action of diffeomorphisms on sections of the endomorphism bundle of $TM$.
\begin{lemma}
\label{lem: conj with df tame}
    The map 
    \begin{align*}
        \Diff(M)\times \Gamma(M,\End TM)&\longrightarrow \Gamma(M,\End TM)\\
        (f,\psi)&\longmapsto df\circ \psi\circ df^{-1}
    \end{align*}
    is smooth tame. 
\end{lemma}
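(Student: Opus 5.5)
The plan is to work in the tame charts of $\Diff(M)$ from Section~\ref{sect: chart diff(M)} and reduce the map to compositions of maps already known to be smooth tame. Smoothness tameness is local, so I fix $f_0\in\Diff(M)$ and use the chart $\exp^h\colon\mathcal{U}_{f_0}\to\mathcal{V}_{f_0}$. The map factors as
\[
(f,\psi)\longmapsto (f,\,df,\,\psi)\longmapsto \bigl(df\circ\psi\bigr)\circ\bigl((df)\circ f^{-1}\bigr)^{-1}\circ f^{-1}\ \text{(suitably interpreted)}.
\]
Because $\psi$ is a section over $M$ and we need $df\circ\psi\circ df^{-1}$ evaluated at $x$ to be $df_{f^{-1}(x)}\,\psi_{f^{-1}(x)}\,(df_{f^{-1}(x)})^{-1}$, it is cleaner to write it as
\[
df\circ\psi\circ df^{-1} \;=\; \bigl(\,df\cdot\psi\cdot (df)^{-1}\bigr)\circ f^{-1},
\]
where $df\in\Gamma(M,\Hom(TM,f^*TM))$ and the inner expression lies in $\Gamma(M,f^*\End TM)$.

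The first step is to show that $f\mapsto df$, viewed in charts, is smooth tame. In the chart, $f=\exp^h\circ X$ with $X\in\Gamma(M,f_0^*TM)$. Then $df_x$ depends only on the $1$-jet of $X$ at $x$, via the derivative of $\exp^h$ on the tubular neighborhood $U$. Trivializing $f^*TM$ against $f_0^*TM$ by parallel transport along the geodesic, which again depends only on $X(x)$, $df$ becomes a nonlinear partial differential operator of order $1$ in $X$. It is therefore smooth tame by Proposition~\ref{prop: nonlin part smooth tame}. Similarly, the inverse $(df)^{-1}$ is smooth tame by Proposition~\ref{prop: comp vb homs sm tame}, since inversion of bundle automorphisms is smooth tame. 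The product $df\cdot\psi\cdot(df)^{-1}$ is then smooth tame by the composition statement of Proposition~\ref{prop: comp vb homs sm tame}, using that the parallel-transport identification of bundles is itself a zeroth-order nonlinear operator.

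The second step is to handle precomposition with $f^{-1}$. By \cite{hamilton82}, inversion $I$ on $\Diff(M)$ is smooth tame, since $\Diff(M)$ is a tame Lie group. The evaluation/composition map $(g,\sigma)\mapsto\sigma\circ g$ for sections $\sigma$ of a bundle over $M$ is smooth tame; Hamilton proves this for maps between manifolds, cf.\ the composition results underlying Theorem III.2.3.1 in \cite{hamilton82}. The remaining composition of smooth tame maps is smooth tame, via the chain rule and the category property. I expect the main obstacle to be exactly this point: composition $(g,\sigma)\mapsto \sigma\circ g$ loses derivatives in $g$ and is only tame (not Banach-smooth). Its tameness should be quoted from Hamilton's treatment of $\Diff(M)$, where the same estimate gives smoothness of group multiplication, rather than reproved. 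I would also check carefully that the bundle identifications in the charts depend smoothly tamely on $X$, so that the formula is really a composite of the cited maps.
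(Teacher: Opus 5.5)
Your route is correct, but it differs from the paper's. The paper never treats $df$ and the precomposition with $f^{-1}$ separately. It introduces the auxiliary map $A(f,\psi,X)=df\circ\psi\circ df^{-1}(X)$ on $\Diff(M)\times\Gamma(M,\End TM)\times\X(M)$ and rewrites it as $A(f,\psi,X)=\Ad(f)\{\psi\circ\Ad(f^{-1})X\}$. It then concludes that $A$ is a composite of smooth tame maps: the adjoint action (smooth tame as the derivative at $\id$ of conjugation in the tame Lie group $\Diff(M)$), inversion, and the zeroth-order operator $(\psi,Y)\mapsto\psi\circ Y$. The endomorphism is recovered by evaluating on a local frame, $\sum_i A(f,\psi,\del_i)\otimes\dd x^i$, patched with a partition of unity. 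Since $\Ad(f)$ maps $\X(M)$ to itself, this avoids all pulled-back bundles and parallel-transport identifications. The loss of derivatives from composing with $f^{-1}$ is absorbed into the already-known smooth tameness of the group operations.

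Your decomposition, a first-order nonlinear operator in $f$ followed by precomposition with $f^{-1}$, in effect reproves the smooth tameness of $\Ad$ by hand. It is more explicit about where the derivatives go, but it costs the bookkeeping you flag. For the composition step, Hamilton's composition theorem is cleanest applied in the manifold of maps $C^\infty(M,\End TM)$, with $\End TM$ taken as a total space. Regard $y\mapsto df_y\circ\psi_y\circ(df_y)^{-1}$ as a map covering $f$, determined by the $1$-jet of $f$ and the $0$-jet of $\psi$. Compose it with $f^{-1}$ and use that the result lies in the tame submanifold of sections; this makes the parallel transport unnecessary. A minor point: Proposition~\ref{prop: comp vb homs sm tame} inverts automorphisms of a fixed bundle, so you should invert $(df_0)^{-1}\circ P_X\circ df\in\Gamma(M,\aut TM)$ rather than $P_X\circ df$ itself.
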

\begin{proof}
We define the auxiliary map
\begin{align*}
    A\colon\Diff(M)\times \Gamma(M,\End TM)\times \X(M)&\longrightarrow\X(M)\\
    (f,\psi,X)&\longmapsto df\circ \psi\circ df^{-1}(X).
\end{align*}
We observe that
\begin{align*}
    A(f,\psi,X)=&df\circ \psi\circ df^{-1}\circ X\circ f\circ f^{-1}\\
    =&df\circ(\psi\circ (df^{-1}\circ X\circ f))\circ f^{-1}\\
    =&\Ad(f)\{\psi\circ \Ad(f^{-1})X\}.
\end{align*}
It follows that $A$ is a composition of smooth tame maps and hence smooth tame. In a local coordinate neighborhood, we can recover the original map by setting
\begin{align*}
    (f,\psi)\longmapsto \sum_i\, A(f,\psi,\del_i)\otimes \dd x^i.
\end{align*}
These patch together to give a well-defined global section to $\End TM$. The patching can be made explicit using a partition of unity. In particular, this yields a smooth tame map. 
\end{proof}

\begin{proposition}
    The action of the diffeomorphisms on the tame Fréchet manifold $\mathrm{AC}(M)$ of almost complex structures is smooth tame.
\end{proposition}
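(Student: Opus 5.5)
The plan is to deduce the statement directly from Lemma~\ref{lem: conj with df tame} by restriction. That lemma shows that the map
\begin{align*}
    \Phi\colon \Diff(M)\times \Gamma(M,\End TM)\longrightarrow \Gamma(M,\End TM),\quad (f,\psi)\longmapsto df\circ\psi\circ df^{-1}
\end{align*}
is smooth tame. The action on almost complex structures is the restriction of $\Phi$ to $\Diff(M)\times \mathrm{AC}(M)$. So it suffices to check two things: that $\Phi$ maps $\Diff(M)\times\mathrm{AC}(M)$ into $\mathrm{AC}(M)$, and that restricting a smooth tame map to a tame Fréchet submanifold, in both source and target, yields a smooth tame map of tame Fréchet manifolds.

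The first point is pointwise algebra. We have $(df\circ J\circ df^{-1})^2=df\circ J^2\circ df^{-1}=-\id$, and the result is again a smooth section of the fiber subbundle $\{A\in\End TM: A^2=-\id\}$. Hence $J_f\in\mathrm{AC}(M)$.

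For the second point, I use the submanifold charts from \cite[Thm II.2.3.1]{hamilton82}. Since $\mathrm{AC}(M)$ is the space of sections of a fiber subbundle $E\subset\End TM$, I choose a tubular neighborhood of $E$ in $\End TM$ with a smooth fiber-preserving retraction $r$ onto $E$, defined on an open neighborhood. The associated non-linear vector bundle operator $r_*\colon \psi\mapsto r\circ\psi$ is smooth tame by Proposition~\ref{prop: nonlin part smooth tame}. Chart maps of $\mathrm{AC}(M)$ are built from such vector bundle operators composed with local trivializations of $E$, so they are smooth tame as well. In these charts, the local representative of the action is the composition of
\begin{enumerate}
\item the inclusion of $\mathrm{AC}(M)$ into $\Gamma(M,\End TM)$, read in charts;
\item the map $\Phi$, read in the exponential charts of $\Diff(M)$ from Section~\ref{sect: chart diff(M)};
\item the retraction $r_*$ followed by the chart map of $\mathrm{AC}(M)$.
\end{enumerate}
Since $\Phi$ preserves $\mathrm{AC}(M)$, the retraction acts as the identity on its image, so this composition really is the local representative of the action. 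Because compositions of smooth tame maps are smooth tame (the Nash--Moser category), the action is smooth tame.

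The main obstacle is the second point: making precise that the inclusion $\mathrm{AC}(M)\hookrightarrow\Gamma(M,\End TM)$ and the induced charts are smooth tame, and that a smooth tame map into the ambient space that lands in the submanifold is smooth tame into the submanifold. I would handle this entirely through the retraction $r_*$, rather than through a general submanifold theorem, since Proposition~\ref{prop: nonlin part smooth tame} gives tameness of $r_*$ for free. A minor additional point is that $\Diff(M)$ is only locally modelled on $\Gamma(M,f^*TM)$. Smooth tameness of $\Phi$ in Lemma~\ref{lem: conj with df tame} is already understood in these charts, so no extra work is needed there.
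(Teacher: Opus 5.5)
Your proposal is correct and follows the paper's proof, which simply combines Lemma~\ref{lem: conj with df tame} with the fact that $\mathrm{AC}(M)$ is a tame Fréchet submanifold of $\Gamma(M,\End TM)$; your fiberwise-retraction argument just makes explicit the restriction-to-a-submanifold step that the paper leaves implicit. The only imprecision is that the charts of $\mathrm{AC}(M)$ do not come from local trivializations of $E$. They come from a fiber-preserving diffeomorphism between a neighborhood of a section's image in $E$ and a neighborhood of the zero section of the vertical bundle along that section, but these are still non-linear vector bundle operators, so your argument goes through unchanged.
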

\begin{proof}
   This follows from Lemma~\ref{lem: conj with df tame}, since $\mathrm{AC}(M)\subset\Gamma(M,\End TM)$ is a tame Fréchet submanifold.
\end{proof}

It was already mentioned in Section~\ref{sect: preliminaries cx geom} that when acting with a diffeomorphism $f$ contained in a small enough $C^1$-neighborhood $\mathcal{V}$ of the identity on a complex structure $J$, the resulting complex structure $J_f$ will again have finite distance from $J$. We will denote by $\Theta(f)\in\A^{0,1}(M,T^{1,0})$ the corresponding complex deformation of $J$. Since $\Theta(f)$ can be thought of as the action of $f\in \Diff(M)$ in a local chart of the tame manifold of almost complex structures, the map $\Theta\colon \mathcal{V}\rightarrow \A^{0,1}(M,T^{1,0})$ is smooth tame.

Instead of acting on the complex structure $J$ we can act with a diffeomorphism $f$ on an almost complex structure $J_{\omega}$ at finite distance from $J$. Again, for $f$ in a small enough $C^1$-neighborhood of the identity, the resulting structure $J_{\omega,f}$ will again have finite distance from $J$ and we will denote the corresponding almost complex deformation by $\Theta(\omega,f)\in\A^{0,1}(M,T^{1,0})$.
Again $\Theta$ can be thought of as a local description of the action of the diffeomorphisms on $\mathrm{AC}(M)$ and hence, it is smooth tame. In the following we will see an explicit formula for $\Theta(\omega,f)$. For this, we introduce the following notation for the different components of the $\C$-linear extension of the differential $df$ of a diffeomorphism $f$ with respect to $T=T^{1,0}\oplus T^{0,1}$ by
    \begin{align*}
        df=\begin{pmatrix}
            \del f &\dbar f\\
            \del \bar{f} & \overline{\del f}
        \end{pmatrix}.
    \end{align*}
\begin{proposition}

    \label{prop: Theta(w,f)}
    \label{prop: Theta(w,f)}
There is a $C^1$-neighborhood $\mathcal{V}$ of the identity in $\Diff(M)$ and a $C^0$-neighborhood $\mathcal{U}\subset \A^{0,1}(M,T^{1,0})$ of $0$ such that for every $f\in \mathcal{V}$, $\omega\in\mathcal{U}$, the almost complex structure $J_{\omega,f}$ has finite distance from $J$. The corresponding almost complex deformation $\Theta(\omega,f)\in\A^{0,1}(M,T^{1,0})$ depends smooth tamely on $\omega$ and $f$ and is given by the formula
\begin{equation}\label{eq: formula theta(omega,f)}
    \Theta(\omega,f)=-\left(\dbar f-\del f\circ \omega\right)\left(\overline{\del f}-\del\bar{f}\circ \omega\right)^{-1},
    \end{equation}
\end{proposition}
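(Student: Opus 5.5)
We need to understand which convention the action uses. The action is $J_f=df\circ J\circ df^{-1}$, so $T^{0,1}(J_f)=df\,(T^{0,1}(J))$ pointwise, interpreted as $df_{f^{-1}(p)}$ mapping the fiber at $f^{-1}(p)$ to the fiber at $p$. For $J_\omega$ we have $T^{0,1}(J_\omega)=L_\omega$, so $T^{0,1}(J_{\omega,f})_p=df(L_\omega)_{f^{-1}(p)}$. The plan is to write this image as a graph over $T^{0,1}$ and read off $\Theta$.

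\textbf{Step 1 (graph description).} Take $X\in T^{0,1}$ at $q=f^{-1}(p)$. Using the block decomposition of $df$, compute
\[
df(-\omega(X)+X)=\bigl(\dbar f-\del f\circ\omega\bigr)X+\bigl(\overline{\del f}-\del\bar f\circ\omega\bigr)X ,
\]
where the first summand lies in $T^{1,0}_p$ and the second in $T^{0,1}_p$. Set $B:=\overline{\del f}-\del\bar f\circ\omega\colon T^{0,1}_q\to T^{0,1}_p$.

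For $f=\id$ and $\omega=0$ we have $B=\id$. By continuity, $B$ is invertible whenever $df$ is $C^0$-close to the identity and $\omega$ is $C^0$-small. Closeness of $df$ to the identity is where the $C^1$-neighborhood $\mathcal{V}$ enters; it should be measured via the exponential chart $F_\xi$ with $\xi$ small in $C^1$.

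Substituting $Y=BX$, the image becomes
\[
\{-\Theta(Y)+Y : Y\in T^{0,1}_p\},\qquad \Theta=-(\dbar f-\del f\circ\omega)B^{-1}.
\]
This is exactly \eqref{eq: formula theta(omega,f)}, with all compositions understood with the base point moved by $f^{-1}$. Since $J_{\omega,f}$ is an honest almost complex structure, condition \eqref{eq: condition almost complex} holds automatically. Hence $J_{\omega,f}$ has finite distance from $J$, with deformation $\Theta(\omega,f)$.

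\textbf{Step 2 (smooth tameness).} Write the components of $df$ as $\pr\circ\Ad$-type expressions. Concretely, $\del f=\pr_{1,0}\circ df\circ\iota_{1,0}$, evaluated as sections of $\Hom$ bundles after composing with $f^{-1}$. Then use the following facts:
\begin{itemize}
\item The map $f\mapsto df\circ(\cdot)\circ f^{-1}$ is smooth tame, by the argument of Lemma~\ref{lem: conj with df tame} (writing things via $\Ad(f)$ and $\Ad(f^{-1})$).
\item The map $\omega\mapsto\omega\circ f^{-1}$ is smooth tame, since composition with diffeomorphisms is smooth tame in Hamilton's framework.
\item Composition of bundle homomorphisms and inversion on $\aut$ are smooth tame by Proposition~\ref{prop: comp vb homs sm tame}.
\end{itemize}
Since compositions of smooth tame maps are smooth tame, $\Theta$ is smooth tame on $\mathcal{U}\times\mathcal{V}$. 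Alternatively, one could argue that $\Theta$ is the chart representative of the smooth tame action from the preceding proposition.

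\textbf{Main obstacle.} I expect the main difficulty to be the base-point bookkeeping. Both $df$ and $\omega$ live at $f^{-1}(p)$, so the formula must be read as an identity of sections after precomposition with $f^{-1}$. This needs to be made precise enough to apply Proposition~\ref{prop: comp vb homs sm tame}.

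The open-ness of the invertibility region in the $C^0$ (respectively $C^1$) topology also needs checking. For $(\omega,f)$ near $(0,\id)$, uniform closeness to the identity is enough, because $\aut(T^{0,1})$ is an open subbundle.
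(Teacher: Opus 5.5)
Your Step 1 is the paper's proof essentially line for line. The paper writes $T^{0,1}(J_{\omega,f})_p=\{df(Y-\omega(Y)) : Y\in T^{0,1}_{f^{-1}(p)}\}$, splits it into $(1,0)$- and $(0,1)$-components, and asserts that $\overline{\del f}-\del\bar f\circ\omega$ is invertible on a $C^0\times C^1$-neighborhood of $(0,\id)$. It then substitutes $X=BY$ and reads off $\Theta$. The paper's proof contains no separate tameness argument. It takes smooth tameness from the preceding remark that $\Theta$ is the smooth tame action $J'\mapsto df\circ J'\circ df^{-1}$ (Lemma~\ref{lem: conj with df tame}) read in the chart $J'\mapsto\omega_{J'}$ of $\mathrm{AC}(M)$. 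That is exactly your ``alternative'', so the proposal is complete via that route.

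Your primary route in Step 2 does not work as literally stated, which is the bookkeeping problem you flagged. $\omega\circ f^{-1}$ is a section of $\Hom((f^{-1})^*T^{0,1},(f^{-1})^*T^{1,0})$, and $\del f\circ f^{-1}$ is a section of $\Hom((f^{-1})^*T^{1,0},T^{1,0})$. These bundles depend on $f$, so such objects do not live in a fixed Fr\'echet space, and Proposition~\ref{prop: comp vb homs sm tame} cannot be applied to them.

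The clean fix is never to split off $f^{-1}$. Put $P_\omega(v):=\pr_{0,1}v-\omega(\pr_{0,1}v)$, a section of $\End T_{\C}M$, and set $Q:=df\circ P_\omega\circ df^{-1}$. By the complexification of Lemma~\ref{lem: conj with df tame}, $Q$ is smooth tame in $(\omega,f)$. With $S:=\pr_{0,1}\circ df^{-1}|_{T^{0,1}}$, which is invertible for $f$ $C^1$-close to $\id$, one checks $\pr_{0,1}Q|_{T^{0,1}}=BS$ and $\pr_{1,0}Q|_{T^{0,1}}=(\dbar f-\del f\circ\omega)S$. Hence
\[
\Theta(\omega,f)=-\bigl(\pr_{1,0}Q|_{T^{0,1}}\bigr)\bigl(\pr_{0,1}Q|_{T^{0,1}}\bigr)^{-1}.
\]
This is a composition of smooth tame maps between sections of fixed bundles, including inversion on $\aut T^{0,1}$, by Proposition~\ref{prop: comp vb homs sm tame}. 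It is simply an explicit version of the paper's chart argument.
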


\begin{proof}

We only need to prove the explicit formula for $\Theta(\omega,f)$. For this, we observe that
    \begin{align*}
        T^{0,1}(J_{\omega,f})_p=\left\{df\left(Y-\omega(Y)\right):Y\in T^{0,1}_{f^{-1}(p)}\right\}.
    \end{align*}
    Then 
    \begin{align*}
        \pr_{0,1}T^{0,1}(J_{\omega,f})_p=&\left\{\overline{\del f}(Y)-\del\bar{f}\left(\omega(Y)\right):Y\in T^{0,1}_{f^{-1}(p)}\right\},\\
        \pr_{1,0}T^{0,1}(J_{\omega,f})_p=&\left\{\dbar f(Y)-\del f\left(\omega(Y)\right):Y\in T^{0,1}_{f^{-1}(p)}\right\}.
    \end{align*}
    Note that is a $C^0$-neighborhood $\mathcal{U}\subset \A^{0,1}(M,T^{1,0})$ and a $C^1$-neighborhood $\mathcal{V}\subset \Diff(M)$ of the identity such that $\left(\overline{\del f}-\del\bar{f}\circ \omega\right)\colon T^{0,1}\rightarrow T^{0,1}$ is invertible for every $(\omega,f)\in\mathcal{U}\times \mathcal{V}$. Then for every $Y\in T^{0,1}_{f^{-1}(p)}$ we can find a unique $X\in T^{0,1}_p$ such that $$\left(\overline{\del f}-\del\bar{f}\circ \omega\right)Y=X.$$
    It follows that 
    \begin{align*}
        T^{0,1}(J_{\omega,f})_p=\left\{X+\left(\dbar f-\del f\circ \omega\right)\left(\overline{\del f}-\del\bar{f}\circ \omega\right)^{-1}X:X\in T^{0,1}_{(p)}\right\}
    \end{align*}
    and hence
\begin{equation*}
    \Theta(\omega,f)=-\left(\dbar f-\del f\circ \omega\right)\left(\overline{\del f}-\del\bar{f}\circ \omega\right)^{-1}.
    \end{equation*}
\end{proof}

 We will also write $\Theta(\omega,f)=:f\cdot \omega$. 
\begin{lemma}
\label{lem: left action on theta}
    Let $f,g\in \Diff(M)$ and $\omega\in \A^{0,1}(M,T^{1,0})$ such that $\Theta(\omega, g\circ f)$ is defined. Then $\Theta(\omega,g\circ f)=g\cdot\Theta(\omega,f)$. 
\end{lemma}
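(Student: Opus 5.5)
The plan is to argue at the level of subbundles, not through formula~\eqref{eq: formula theta(omega,f)}. This is possible because $\Theta(\omega,f)$ is characterized by its graph. Here $J_{\omega,f}=df\circ J_\omega\circ df^{-1}$ is the pushforward of $J_\omega$, and its $(0,1)$-bundle is
\[
T^{0,1}(J_{\omega,f})_p=df\bigl((L_\omega)_{f^{-1}(p)}\bigr),
\]
as recorded in the proof of Proposition~\ref{prop: Theta(w,f)}. By definition, $\Theta(\omega,f)$ is the unique element of $\A^{0,1}(M,T^{1,0})$ with $L_{\Theta(\omega,f)}=T^{0,1}(J_{\omega,f})$. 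Uniqueness holds because a subbundle that is a graph over $T^{0,1}$ determines its graphing map.

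The key step is the functoriality of the pushforward:
\[
J_{\omega,g\circ f}=d(g\circ f)\circ J_\omega\circ d(g\circ f)^{-1}=dg\circ\bigl(df\circ J_\omega\circ df^{-1}\bigr)\circ dg^{-1}=(J_{\omega,f})_g ,
\]
which follows from the chain rule $d(g\circ f)=dg\circ df$, with the base points transforming accordingly. In terms of $(0,1)$-bundles this reads
\[
T^{0,1}(J_{\omega,g\circ f})_p=dg\bigl(T^{0,1}(J_{\omega,f})_{g^{-1}(p)}\bigr).
\]
Substituting $J_{\omega,f}=J_{\Theta(\omega,f)}$ shows that the right-hand side is $T^{0,1}(J_{\Theta(\omega,f),g})_p$. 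By definition, this is $L_{g\cdot\Theta(\omega,f)}$, whenever the latter is defined.

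It remains to check that $g\cdot\Theta(\omega,f)=\Theta(\Theta(\omega,f),g)$ makes sense, meaning that $J_{\Theta(\omega,f),g}$ has finite distance from $J$. This is the main (mild) obstacle, since the hypothesis only says that $\Theta(\omega,g\circ f)$ is defined. I would resolve it by noting that $J_{\Theta(\omega,f),g}=J_{\omega,g\circ f}$, which has finite distance from $J$ by assumption. Having finite distance from $J$ is a property of the almost complex structure alone: its $(0,1)$-bundle projects isomorphically onto $T^{0,1}$. Hence the associated deformation exists and equals $\Theta(\omega,g\circ f)$.

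Implicitly, the statement also assumes that $\Theta(\omega,f)$ is defined, i.e.\ that $J_{\omega,f}$ has finite distance from $J$; I would state this as part of the hypothesis. Comparing the graphs then gives $L_{\Theta(\omega,g\circ f)}=L_{g\cdot\Theta(\omega,f)}$, and uniqueness of the graphing map yields $\Theta(\omega,g\circ f)=g\cdot\Theta(\omega,f)$.

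As an optional consistency check, one could instead verify the identity directly from~\eqref{eq: formula theta(omega,f)}. This uses the block decomposition of $d(g\circ f)=dg\circ df$ and the identity $(A-BC)(D-EC)^{-1}$ composed with block multiplication. The subbundle argument avoids that computation, so I would not pursue it.
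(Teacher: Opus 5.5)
Your proposal is correct and uses the same argument as the paper, whose entire proof is the chain-rule identity $J_{g\circ f}=d(g\circ f)\circ J\circ d(g\circ f)^{-1}=dg\circ J_f\circ dg^{-1}=(J_f)_g$ (with $J$ there standing for $J_\omega$). Your extra steps make explicit what the paper leaves implicit: uniqueness of the graphing map, the need for $\Theta(\omega,f)$ to be defined, and the fact that $J_{\Theta(\omega,f),g}=J_{\omega,g\circ f}$ has finite distance from $J$.
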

\begin{proof}
    This follows from 
    \begin{align*}
        J_{g\circ f}=d(g\circ f)\circ J\circ d(g\circ f)^{-1}=dg\circ J_f\circ dg^{-1}=(J_f)_g.
    \end{align*}
\end{proof}
We set
\begin{align*}
\mathcal{U}^{(k)}_{\varepsilon}&:=\{\xi\in\A^0(M,T^{1,0}):\Vert\xi\Vert_{k,\infty}<\varepsilon\},\\
\mathcal{W}^{(k)}_{\varepsilon}&:=\{\omega\in \A^{0,1}(M,T^{1,0}):\Vert \omega\Vert_{k,\infty}<\varepsilon\}.
\end{align*}

Then for $\varepsilon>0$ small enough, we may concatenate $\Theta$ with $\exp^h\circ \Psi$ to obtain a smooth tame map
\begin{align*}
    \mathcal{E}\colon \mathcal{W}^{(0)}_{\varepsilon}\times \mathcal{U}^{(1)}_{\varepsilon}&\longrightarrow \A^{0,1}(M,T^{1,0})\\
    (\omega,\xi)&\longmapsto \mathcal{E}(\omega,\xi)=F_{\xi}\cdot \omega.
\end{align*}

\begin{rem}
    In~\cite{hamilton1977deformation}, Hamilton already stated that a map closely related to $\mathcal{E}$ is smooth tame by arguing that it is a nonlinear partial differential operator. However, this is not clear to us as when substituting $f=\exp^h(\Psi\xi)$ in the formula~\eqref{eq: formula theta(omega,f)} for $\Theta(\omega,f)$, the section $\xi\in \A^{0,1}(M,T^{1,0})$ is evaluated in different base points. This means that the map $\mathcal{E}(\omega,\xi)=\Theta\left(\omega,\exp^h(\Psi\xi)\right)$ does not factorize through the jet bundle. Therefore, we chose to give the different proof above.
\end{rem}
\begin{proposition}
\label{prop: small diff on small alm def}
    Let $\omega\in\mathcal{W}^{(0)}_{\varepsilon}$, and $\xi\in \mathcal{U}^{(1)}_{\varepsilon}$, where $\varepsilon>0$ is small enough so that $\mathcal{E}(\omega,\xi)$ is defined. Then
   \begin{align*}
    \mathcal{E}(\omega,\xi)=\omega-\dbar (a_{\omega}\xi)+[
\omega,a_{\omega}\xi]-
\iota_{\bar{\xi}}\left(\dbar\omega-\tfrac{1}{2}[\omega,\omega]\right)+R(\xi,\omega),  
   \end{align*}
   where $a_{\omega}\xi=\xi+i_{\bar{\xi}}\omega$ and $R(t\xi,\omega)=t^2R(\xi,\omega,t)$ for a real parameter $t$ with $R(\xi,\omega,t)$ depending smoothly on $t$ for small $t$. 
\end{proposition}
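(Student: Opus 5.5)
Since $\mathcal{E}$ is smooth in $\xi$ (it is smooth tame by construction), Theorem~\ref{thm: taylor} applied in the variable $\xi$ at $\xi=0$, with $\omega$ fixed, gives
\[
\mathcal{E}(\omega,\xi)=\mathcal{E}(\omega,0)+D_\xi\mathcal{E}(\omega,0)\xi+R(\xi,\omega),
\]
with a remainder $R$ of the required form $R(t\xi,\omega)=t^2R(\xi,\omega,t)$. Because $F_0=\id$, we have $\mathcal{E}(\omega,0)=\omega$. The whole statement therefore reduces to the first-order identity
\[
D_\xi\mathcal{E}(\omega,0)\xi=-\dbar(a_\omega\xi)+[\omega,a_\omega\xi]-\iota_{\bar\xi}\big(\dbar\omega-\tfrac12[\omega,\omega]\big).
\]
To compute this derivative I will avoid differentiating the pointwise formula \eqref{eq: formula theta(omega,f)}, since that would involve $f^{-1}$ and base-point shifts. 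Instead I will use the geometric description: $\mathcal{E}(t\xi,\omega)$ is the deformation whose $(0,1)$-bundle is $dF_{t\xi}(L_\omega)$, where $L_\omega=\{Y-\omega(Y)\}$. Write $X=\Psi\xi=\xi+\bar\xi$. The curve $F_{t\xi}$ has velocity $X$ at $t=0$, because $D_0\exp^h=\id$. By Lemma~\ref{lem: lemma for approx f on J}, for a section $Z$ of $L_\omega$ the pushforward is, to first order, $Z-t[X,Z]$. So the first-order variation of the subbundle is the image of $Z\mapsto -[X,Z]$, taken modulo $L_\omega$.

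Next I translate this variation into $\A^{0,1}(T^{1,0})$. Let $\omega_t$ be the deformation with $L_{\omega_t}=dF_{t\xi}L_\omega$. Then $\dot\omega(Y)$ is determined by writing $Y-\omega(Y)-t[X,Y-\omega(Y)]$ in the form $Y'-\omega_t(Y')$, with $Y'=Y+t\,\pr_{0,1}(\cdots)$. This yields
\[
\dot\omega(Y)=\pr_{1,0}[X,Y-\omega(Y)]+\omega\big(\pr_{0,1}[X,Y-\omega(Y)]\big),
\]
up to sign conventions that I will fix carefully. The $L_\omega$-ambiguity drops out in this formula. I then split $X=\xi+\bar\xi$. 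The $\xi$-part uses Lemma~\ref{lem: formulas dbar and bracket for 0 and 0,1}(1),(3) and produces $-\dbar\xi+[\omega,\xi]$ together with a term $\omega(\pr_{0,1}[\xi,\omega(Y)])$ that must cancel or combine. The $\bar\xi$-part is more delicate, because $\bar\xi\in T^{0,1}$. Here I will use Lemma~\ref{lem: formulas dbar and bracket for 0 and 0,1}(2),(4) with the pair $(X,Y)=(\bar\xi,Y)$. This rewrites the brackets $[\bar\xi,\omega(Y)]$, $[Y,\omega(\bar\xi)]$ and $\omega([\bar\xi,Y])$ in terms of $(\dbar\omega)(\bar\xi,Y)$, $[\omega,\omega](\bar\xi,Y)$, $\dbar(\iota_{\bar\xi}\omega)(Y)$ and $[\omega,\iota_{\bar\xi}\omega](Y)$. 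Collecting terms should give $-\dbar(\iota_{\bar\xi}\omega)+[\omega,\iota_{\bar\xi}\omega]-\iota_{\bar\xi}(\dbar\omega-\tfrac12[\omega,\omega])$. Combined with the $\xi$-part, this is exactly the claimed linear term with $a_\omega\xi=\xi+\iota_{\bar\xi}\omega$.

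The main obstacle is this last bookkeeping step: matching the $\bar\xi$-terms exactly against $\iota_{\bar\xi}$ of the Maurer--Cartan expression, with correct signs and factors of $2$ from Lemma~\ref{lem: formulas dbar and bracket for 0 and 0,1}(4). There is a further subtlety. The formula must hold exactly in $\omega$, whereas the expansion is only first order in $\xi$. So any terms quadratic or cubic in $\omega$, such as $\omega(\pr_{0,1}[\bar\xi,\omega(Y)])$, must be tracked exactly and not discarded. I will first check the identity in local holomorphic coordinates with $\omega=\omega^j_{\bar k}\,d\bar z^k\otimes\partial_j$. There the $\pr$'s are explicit, and the cubic terms can be seen to vanish because the correction terms stay in $T^{0,1}$ after $\omega$ is applied twice.
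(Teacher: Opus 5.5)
Your proposal is correct and is essentially the paper's proof: both use Lemma~\ref{lem: lemma for approx f on J} to reach the same first-order term $\pr_{1,0}[\Psi\xi,Y-\omega(Y)]+\omega\bigl(\pr_{0,1}[\Psi\xi,Y-\omega(Y)]\bigr)$ and then rewrite it with Lemma~\ref{lem: formulas dbar and bracket for 0 and 0,1}; the only difference is that you apply Theorem~\ref{thm: taylor} first and differentiate the graph condition, whereas the paper inverts $B(\omega,\xi)$ to first order. The points you left open close at once: $\omega(\pr_{0,1}[\xi,\omega(Y)])=0$ because $T^{1,0}$ is involutive, and there are no cubic terms, since the only quadratic term satisfies $-\omega(\pr_{0,1}[\bar\xi,\omega(Y)])=\tfrac12[\omega,\omega](\bar\xi,Y)+[\omega,\iota_{\bar\xi}\omega](Y)$ by parts (3) and (4) of that lemma.
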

\begin{proof}
    We write
    \begin{align*}
        T_p^{0,1}(J_{\mathcal{E}(\omega,\xi)})=&\left\{ \Ad(F_{\xi})\left(X-\omega(X)\right)_p:X\in\Gamma(M,T^{0,1})\right\}\\
        =&\left\{X_p-\omega(X_p)-[\Psi \xi,X-\omega(X)]_p+R_A(\xi)\left(X-\omega(X)\right)_p:X\in\Gamma(M,T^{0,1})\right\},
    \end{align*}
    where we used Lemma~\ref{lem: lemma for approx f on J}. 
    We define $B(\omega,\xi)\colon \Gamma(M,T^{0,1})\rightarrow \Gamma(M,T^{0,1})$ by
    \begin{equation}\label{eq: linear approx B}\begin{aligned}
    B(\omega,\xi)X&=\pr_{0,1}\Ad(F_{\xi})\left(X-\omega(X)\right)\\&=X-\pr_{0,1}[\Psi\xi,X-\omega(X)]+\pr_{0,1}R_A(\xi)\left(X-\omega(X)\right).  \end{aligned}  
    \end{equation}
    For $\varepsilon>0$ small enough, the map $B(\omega,\xi)$ is invertible for every $(\omega,\xi)\in \mathcal{W}_{\varepsilon}^{(0)}\times \mathcal{U}^{(1)}_{\varepsilon}$ and the family of inverses $B(\omega,\xi)^{-1}Z$ depends smooth tamely on $\omega$ and $\xi$. To see this note that the inverse of $\pr_{0,1}\colon T^{0,1}(J_{\mathcal{E}(\omega,\xi)})\rightarrow T^{0,1}$ is given by $\id-\mathcal{E}(\omega,\xi)\colon T^{0,1}\to T^{0,1}(J_{\mathcal{E}(\omega,\xi)})\subset T$, which we already know depends smooth tamely on $\xi$ and $\omega$, and the inverse of $X\mapsto\Ad(F_\xi)(X-\omega(X))$ is given by $Z\mapsto\pr_{0,1}\Ad(F_{\xi}^{-1})Y$. Using that $B(\omega,\xi)^{-1}Z$ depends smoothly on $\xi$ and equation~\eqref{eq: linear approx B}, we note that 
    \begin{align*}
        B(\omega,\xi)^{-1}(Z)=Z+\pr_{0,1}[\Psi\xi,Z-\omega(Z)]+\text{ higher orders of }\xi.
    \end{align*}
    Then for $Z\in\Gamma(M,T^{0,1})$ we find that 
    \begin{align*}
    T_p^{0,1}(J_{\mathcal{E}(\omega,\xi)})=&\left\{Z_p-\omega(B(\omega,\xi)^{-1}(Z)_p)-\pr_{1,0}[\Psi\xi,B(\omega,\xi)^{-1}(Z)-\omega\left(B(\omega,\xi)^{-1}(Z)\right)]_p\right.\\&\quad\quad\quad\left.+\text{ higher orders of }\xi:Z\in\Gamma(M,T^{0,1})\right\}\\
        =&\left\{Z_p-\omega(Z_p)-\omega\left(\pr_{0,1}[\Psi\xi,Z-\omega(Z)]_p\right)-\pr_{1,0}[\Psi\xi,Z-\omega(Z)]_p\right.\\&\quad\quad\left.+\text{ higher orders of }\xi:Z\in\Gamma(M,T^{0,1})\right\}.
    \end{align*}
    It follows that 
    \begin{align}
    \label{eq: for prop small diff on small alm def}
        \mathcal{E}(\omega,\xi)(Z_p)=\omega(Z_p)+\pr_{1,0}[\Psi\xi,Z-\omega(Z)]_p+\omega\left(\pr_{0,1}[\Psi\xi,Z-\omega(Z)]_p\right)+R(\xi,\omega)(Z_p),
    \end{align}
    where $R(t\xi,\omega)=t^2R(\xi,\omega,t)$ for a real parameter $t$ with $R(\xi,\omega,t)$ depending smoothly on $t$ for small $t$. We compute
    \begin{align*}
        \pr_{1,0}&[\Psi\xi,Z-\omega(Z)]_p+\omega\left(\pr_{0,1}[\Psi\xi,Z-\omega(Z)]_p\right)\\=& \pr_{1,0}[\xi,Z]-[\xi,\omega(Z)]-\pr_{1,0}[\bar{\xi},\omega(Z)]+\omega\left([\bar{\xi},Z]\right)+\omega\left(\pr_{0,1}[\xi,Z]\right)-\omega\left(\pr_{0,1}[\bar{\xi},\omega(Z)]\right)\\
        =&-\dbar\xi(Z)+[\omega,\xi](Z)-\dbar\omega(\bar{\xi},Z)+\tfrac{1}{2}[\omega,\omega](\bar{\xi},Z)-\dbar\left(\omega(\bar{\xi})\right)(Z)+\left[\omega,\omega(\bar{\xi})\right](Z)\\
        =&-\dbar(a_{\omega}\xi)(Z)+[\omega,a_{\omega}\xi](Z)-\iota_{\bar{\xi}}\left(\dbar\omega-\tfrac{1}{2}[\omega,\omega]\right)(Z),
    \end{align*}
    where we used Lemma~\ref{lem: formulas dbar and bracket for 0 and 0,1} and wrote $a_{\omega}\xi=\xi+\iota_{\bar{\xi}}\omega$. Plugging this back into equation~\eqref{eq: for prop small diff on small alm def}, we obtain the expression for $\mathcal{E}(\omega,\xi)$ as stated in Proposition~\ref{prop: small diff on small alm def}.
\end{proof}
\begin{lemma}
\label{lem: properties a_omega}
    For some $\varepsilon>0$, $\omega\in\mathcal{W}^{(0)}_{\varepsilon}$, let
    \begin{align*}
        a_{\omega}\colon \A^0(M,T^{1,0})&\longrightarrow \A^0(M,T^{1,0})\\
        \xi &\longmapsto \xi+\iota_{\bar{\xi}}\omega.
    \end{align*}
    Then the family of (real-)linear maps 
    \begin{align*}
        a\colon \mathcal{W}^{(0)}_{\varepsilon}\times \A^{0,1}(M,T^{1,0})&\longrightarrow \A^0(M,T^{1,0})\\
        (\omega,\xi)&\longmapsto a_{\omega}\xi
    \end{align*}
    is smooth tame. Moreover, for $\varepsilon>0$ small enough, $a_{\omega}$ is invertible for every $\omega\in \mathcal{W}^{(0)}_{\varepsilon}$ and the family of inverses $a_{\omega}^{-1}$ is smooth tame.
\end{lemma}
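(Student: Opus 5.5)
The plan is to regard $a$ as a zeroth-order (pointwise) operator and then apply Proposition~\ref{prop: nonlin part smooth tame} and Proposition~\ref{prop: comp vb homs sm tame}. (The domain should read $\A^0(M,T^{1,0})$; the map is real-linear in $\xi$.)

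\emph{Smooth tameness of $a$.} Pointwise, $a_\omega\xi(p)=\xi(p)+\omega_p(\overline{\xi(p)})$. Complex conjugation $T^{1,0}\to T^{0,1}$ is a real vector bundle isomorphism, and evaluation $(\omega_p,Y_p)\mapsto\omega_p(Y_p)$ is a smooth fiberwise bilinear map $(\Lambda^{0,1}\otimes T^{1,0})\oplus T^{0,1}\to T^{1,0}$. Hence
\[
(\omega,\xi)\mapsto \xi+\iota_{\bar\xi}\omega
\]
factors through a smooth fiber-preserving map of $J^0$ of the real bundle $(\Lambda^{0,1}\otimes T^{1,0})\oplus T^{1,0}$. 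It is therefore a non-linear vector bundle operator of order zero, and it is smooth tame by Proposition~\ref{prop: nonlin part smooth tame}. Restricting to the open set $\mathcal{W}^{(0)}_\varepsilon$ preserves this.

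\emph{Invertibility.} Regard $T^{1,0}$ as a real vector bundle $V$. Then $a_\omega$ is induced by the bundle endomorphism $A_\omega:=\id_V+C_\omega\in\Gamma(M,\End_\R V)$, where $C_\omega(v)=\omega(\bar v)$. The map $\omega\mapsto A_\omega$ is affine and pointwise, so it is smooth tame by the same argument. The pointwise operator norm satisfies $|C_\omega|\le|\omega|$. If $\Vert\omega\Vert_{0,\infty}<\varepsilon<1$, with norms chosen compatibly, then $A_\omega(p)$ is invertible at every point via the Neumann series. Thus $A_\omega\in\Gamma(M,\aut V)$, which is open in $\Gamma(M,\End V)$ in the $C^0$ topology.

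The inverse map $\xi\mapsto A_\omega^{-1}\xi$ is pointwise, so it inverts $a_\omega$ on sections. By Proposition~\ref{prop: comp vb homs sm tame}, $\omega\mapsto A_\omega^{-1}$ is smooth tame, being the composition of smooth tame maps. Finally, $(\omega,\xi)\mapsto A_\omega^{-1}\xi$ is the composition of this map with the bilinear pointwise evaluation $\Gamma(\End V)\times\Gamma(V)\to\Gamma(V)$. The evaluation is again an order-zero nonlinear operator and hence smooth tame, so the family of inverses $a_\omega^{-1}$ is smooth tame.

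\emph{Main obstacle.} The only subtle point is that $a_\omega$ is merely real-linear, because of the conjugation. The argument must therefore be carried out with real bundles and real endomorphism bundles throughout, not complex ones.
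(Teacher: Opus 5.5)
Your proposal is correct and follows essentially the paper's argument. As in the paper, $a$ is an order-zero non-linear operator, $a_\omega=\id+\tilde\omega$ is inverted fiberwise by a Neumann series for small $\Vert\omega\Vert_{0,\infty}$, and the family of inverses is smooth tame because it is again pointwise. The only cosmetic difference is that you route the last step through the inversion map of Proposition~\ref{prop: comp vb homs sm tame} composed with pointwise evaluation, whereas the paper directly observes that $(\omega,\xi)\mapsto a_\omega^{-1}\xi$ is a non-linear vector bundle operator. You are also right that the domain of $\xi$ should be $\A^0(M,T^{1,0})$.
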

\begin{proof}
That $a$ is smooth tame of tameness degree $0$ follows from Proposition~\ref{prop: nonlin part smooth tame} and the fact that it is a non-linear partial differential operator of degree $0$.
Moreover, $a_{\omega}$ is invertible for $\Vert\omega\Vert_{0,\infty}$ small enough. Define $\tilde{\omega}\colon T^{1,0}\rightarrow T^{1,0}$ by $\tilde{\omega}(\xi):=\omega(\bar{\xi})$. Then $a_{\omega}=\id+\tilde{\omega}$ and the inverse $a_{\omega}^{-1}$ is given by the sum
    \begin{align*}
        a_{\omega}^{-1}=\sum_{k=0}^{\infty}(-\tilde{\omega})^k,
    \end{align*}
    which converges (fiberwise) for small enough $\Vert \omega\Vert_{0,\infty}$. In particular, the whole family of linear maps $a_{\omega}^{-1}\xi$ is a non-linear vector bundle operator and therefore smooth tame by Proposition~\ref{prop: nonlin part smooth tame}.
\end{proof}

We will use Proposition~\ref{prop: small diff on small alm def} to compute $D_{\xi}\mathcal{E}(\omega,\xi)$ for almost complex deformations $\omega$:
\begin{proposition}
\label{prop: DEw}
    Let $\omega$ be an almost complex deformation of $J$. Then
    \begin{align*}
        D_{\xi}\mathcal{E}(\omega,\xi)\eta=-\dbar(b_{(\omega,\xi)}\eta)+[\mathcal{E}(\omega,\xi),b_{(\omega,\xi)}\eta]-\iota_{\overline{c_{\xi}\eta}}\left(\dbar\mathcal{E}(\omega,\xi)-\tfrac{1}{2}[\mathcal{E}(\omega,\xi),\mathcal{E}(\omega,\xi)]\right),
    \end{align*}
    where $b_{(\omega,\xi)}\eta=a_{\mathcal{E}(\omega,\xi)}c_{\xi}\eta$, with $a$ as defined in Proposition~\ref{prop: small diff on small alm def} and 
    \begin{align*}
    c_{\xi}\eta=DC_{\xi}(0)\eta,\quad C_{\xi}\colon \mathcal{U}\ni\tilde{\eta}\mapsto \left(\exp^h\circ \Psi\right)^{-1}\left(F_{\xi+\tilde{\eta}} F_{\xi}^{-1}\right)\in \A^0\left(M,T^{1,0}\right),
    \end{align*}
    for a sufficiently small $C^1$-neighborhood $\mathcal{U}\subset \A^0\left(M,T^{1,0}\right)$ of zero.
\end{proposition}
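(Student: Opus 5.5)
The plan is to reduce the derivative at a general point $\xi$ to a derivative at $0$, where Proposition~\ref{prop: small diff on small alm def} gives an explicit first-order expansion. For $\tilde\eta$ in a small $C^1$-neighborhood $\mathcal{U}$ of zero I write
\[
F_{\xi+\tilde\eta}=\bigl(F_{\xi+\tilde\eta}F_\xi^{-1}\bigr)\circ F_\xi=F_{C_\xi(\tilde\eta)}\circ F_\xi .
\]
This holds by the definition of $C_\xi$, since $F_{\xi+\tilde\eta}F_\xi^{-1}$ lies in the chart neighborhood of the identity when $\tilde\eta$ is small and $C_\xi(0)=0$. Lemma~\ref{lem: left action on theta} then gives
\[
\mathcal{E}(\omega,\xi+\tilde\eta)=F_{C_\xi(\tilde\eta)}\cdot\bigl(F_\xi\cdot\omega\bigr)=\mathcal{E}\bigl(\mathcal{E}(\omega,\xi),C_\xi(\tilde\eta)\bigr).
\]
Setting $\omega':=\mathcal{E}(\omega,\xi)$, which is again an almost complex deformation, the chain rule for Fréchet derivatives yields
\[
D_\xi\mathcal{E}(\omega,\xi)\eta=D_\xi\mathcal{E}(\omega',0)\,DC_\xi(0)\eta=D_\xi\mathcal{E}(\omega',0)\,c_\xi\eta .
\]
Smoothness (indeed smooth tameness) of $C_\xi$ follows because it is a composition of the smooth tame chart map $(\exp^h\circ\Psi)^{-1}$ with the group operations of the tame Lie group $\Diff(M)$.

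Next I compute $D_\xi\mathcal{E}(\omega',0)\zeta$ from Proposition~\ref{prop: small diff on small alm def} applied to $\omega'$ and $t\zeta$:
\[
\mathcal{E}(\omega',t\zeta)=\omega'-t\,\dbar(a_{\omega'}\zeta)+t\,[\omega',a_{\omega'}\zeta]-t\,\iota_{\bar\zeta}\bigl(\dbar\omega'-\tfrac12[\omega',\omega']\bigr)+t^2R(\zeta,\omega',t).
\]
Here I use that $a_{\omega'}$ and $\iota_{\bar\zeta}$ are real-linear in $\zeta$. Differentiating at $t=0$ kills the remainder, because $R(\zeta,\omega',t)$ depends smoothly, hence continuously, on $t$. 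This gives
\[
D_\xi\mathcal{E}(\omega',0)\zeta=-\dbar(a_{\omega'}\zeta)+[\omega',a_{\omega'}\zeta]-\iota_{\bar\zeta}\bigl(\dbar\omega'-\tfrac12[\omega',\omega']\bigr).
\]
Substituting $\zeta=c_\xi\eta$ and $b_{(\omega,\xi)}\eta=a_{\omega'}c_\xi\eta$ gives exactly the claimed formula.

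The main obstacle is checking that the domains fit. I need $\omega'=\mathcal{E}(\omega,\xi)$ to lie in $\mathcal{W}^{(0)}_\varepsilon$ and $C_\xi(\tilde\eta)$ to lie in $\mathcal{U}^{(1)}_\varepsilon$, so that Proposition~\ref{prop: small diff on small alm def} and Lemma~\ref{lem: left action on theta} apply. I would handle this by shrinking $\varepsilon$ and the neighborhood $\mathcal{U}$, using continuity of $\mathcal{E}$ in the $C^0$ topology on forms and the $C^1$ topology on vector fields, and continuity of $C_\xi$ in the $C^1$ topology with $C_\xi(0)=0$. A secondary point is that the remainder in Proposition~\ref{prop: small diff on small alm def} is stated for fixed $\omega$. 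Since I only differentiate in $t$ at a fixed $\omega'$, this causes no difficulty.
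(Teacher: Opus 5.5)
Your proposal is correct and follows the paper's proof. Both arguments factor $F_{\xi+\tilde\eta}=F_{C_\xi(\tilde\eta)}\circ F_\xi$, use Lemma~\ref{lem: left action on theta} to get $\mathcal{E}(\omega,\xi+\tilde\eta)=\mathcal{E}(\mathcal{E}(\omega,\xi),C_\xi(\tilde\eta))$, and then apply the first-order expansion of Proposition~\ref{prop: small diff on small alm def} at $\omega'=\mathcal{E}(\omega,\xi)$. Your chain-rule step through $D_\xi\mathcal{E}(\omega',0)$ differs slightly from the paper, which inserts $C_\xi(t\eta)$ directly into the expansion. Your version is a little cleaner, because it only needs the remainder estimate along rays $t\zeta$, which is exactly what that proposition states.
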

\begin{proof}

Let $\xi\in \mathcal{U}_{\varepsilon}^{(1)}$. Moreover, let $\mathcal{U}\subset \mathcal{U}_{\varepsilon}^{(1)}$ be an open neighborhood of $0$ such that $\xi+\tilde{\eta}\in\mathcal{U}_{\varepsilon}^{(1)}$ for every $\tilde{\eta}\in\mathcal{U}$. We have:
    \begin{align*}
            \mathcal{E}(\omega,\xi+\tilde{\eta})=F_{\xi+\tilde{\eta}}\cdot\omega.
    \end{align*}
    Note that, possibly after shrinking $\mathcal{U}$, we have
    \begin{align*}
        F_{\xi+\tilde{\eta}}\circ F_{\xi}^{-1}\in \exp^h\circ \Psi(\mathcal{U}_{\varepsilon}^{(1)})
    \end{align*}
    for every $\tilde{\eta}\in\mathcal{U}$. We set
    \begin{align*}
    C_{\xi}(\tilde{\eta}):=\left(\exp^h\circ\Psi\right)^{-1}\left(F_{\xi+\tilde{\eta}} F_{\xi}^{-1}\right).
    \end{align*}
    Then by Lemma~\ref{lem: left action on theta}, we obtain
    \begin{align*}
        \mathcal{E}(\omega,\xi+\tilde{\eta})=\left(F_{C_{\xi}(\tilde{\eta})}\circ F_{\xi}\right)\cdot\omega =F_{C_{\xi}(\tilde{\eta})}\cdot\mathcal{E}(\omega,\xi) =\mathcal{E}\left(\mathcal{E}(\omega,\xi),C_{\xi}(\tilde{\eta})\right).
    \end{align*}
    By Proposition \ref{prop: small diff on small alm def} and since $C_{\xi}(0)=0$, it follows that 
    \begin{align*}
        D_{\xi}\mathcal{E}(\omega,\xi)\eta=&\lim_{t\to 0} 1/t\cdot[\mathcal{E}(\omega,\xi+t\eta)-\mathcal{E}(\omega,\xi)]\\
        =&\lim_{t\to 0} 1/t\cdot[\mathcal{E}\left(\mathcal{E}(\omega,\xi),C_{\xi}(t\eta)\right)-\mathcal{E}(\omega,\xi)]
        \\=&\lim_{t\to0} 1/t\cdot\left[-\dbar(a_{\mathcal{E}(\omega,\xi)}C_{\xi}(t\eta))+[\mathcal{E}(\omega,\xi),a_{\mathcal{E}(\omega,\xi)}C_{\xi}(t\eta)]\right.\\&\qquad\left.-\iota_{\overline{C_{\xi}(t\eta)}}\left(\dbar\mathcal{E}(\omega,\xi)-\tfrac{1}{2}[\mathcal{E}(\omega,\xi),\mathcal{E}(\omega,\xi)]\right)+R(C_{\xi}(t\eta),\mathcal{E}(\omega,\xi))\right]\\
        =&-\dbar(a_{\mathcal{E}(\omega,\xi)}c_{\xi}\eta)+[\mathcal{E}(\omega,\xi),a_{\mathcal{E}(\omega,\xi)}c_{\xi}\eta]-\iota_{\overline{c_{\xi}\eta}}\left(
        \dbar\mathcal{E}(\omega,\xi)-\tfrac{1}{2}[\mathcal{E}(\omega,\xi),\mathcal{E}(\omega,\xi)]\right),
    \end{align*}
    where $c_{\xi}(\eta)=D C_{\xi}(0)\eta$. After setting $b_{(\omega,\xi)}\eta=a_{\mathcal{E}(\omega,\xi)}c_{\xi}\eta$ we arrive at the expression as stated in Proposition~\ref{prop: DEw}.
\end{proof}

\begin{lemma}
\label{lem: properties b and c}
    For some $\varepsilon>0$, let $\mathcal{W}^{(0)}_{\varepsilon}$, $\mathcal{U}^{(1)}_{\varepsilon}$ as above. We assume that $\varepsilon>0$ is small enough so that $c_{\xi}$ and $b_{(\omega,\xi)}$ as in Proposition~\ref{prop: DEw} are defined for $\xi\in\mathcal{U}^{(1)}_{\varepsilon}$ and $\omega\in\mathcal{W}^{(0)}_{\varepsilon}$. Then the families of linear maps
    \begin{align*}
        c\colon \mathcal{U}^{(1)}_{\varepsilon}\times \A^0(M,T^{1,0}) &\longrightarrow \A^0(M,T^{1,0})\\
        (\xi,\eta)&\longmapsto c_{\xi}\eta 
    \end{align*}
    and 
    \begin{align*}
        b\colon \mathcal{W}^{(0)}_{\varepsilon}\times \mathcal{U}^{(1)}_{\varepsilon}\times\A^0(M,T^{1,0}) &\longrightarrow \A^0(M,T^{1,0})\\
        (\omega,\xi,\eta)&\longmapsto b_{(\omega,\xi)}\eta 
    \end{align*}
    are smooth tame. Moreover, for $\varepsilon>0$ small enough, $c_{\xi}$ is invertible for every $\xi\in \mathcal{U}^{(1)}_{\varepsilon}$, and $b_{(\omega,\xi)}$ is invertible for every $\xi\in \mathcal{U}^{(2)}_{\varepsilon}$, $\omega\in\mathcal{W}^{(1)}_{\varepsilon}$. The families of inverses $c_{\xi}^{-1}$ and $b_{(\omega,\xi)}^{-1}$ are smooth tame. 
\end{lemma}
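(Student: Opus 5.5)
The plan is to write $c_\xi$ explicitly in terms of the smooth tame group operations of $\Diff(M)$ and the chart $\exp^h\circ\Psi$. Then smooth tameness of $c$ and $b$ follows from the chain rule, and invertibility comes from a group-theoretic inverse for $c_\xi$ together with Lemma~\ref{lem: properties a_omega} for $b$.

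\emph{Smooth tameness of $c$.} Set $\Phi:=(\exp^h\circ\Psi)^{-1}$, a chart of $\Diff(M)$ near $\id$, so that $F_\xi=\Phi^{-1}(\xi)$. Then $C_\xi(\tilde\eta)=\Phi\big(R_{F_\xi^{-1}}(\Phi^{-1}(\xi+\tilde\eta))\big)$, and the chain rule gives
\begin{align*}
c_\xi\eta=D\Phi(\id)\Big(D\Phi^{-1}(\xi)\eta\circ F_\xi^{-1}\Big),
\end{align*}
using $DR_f(g)X=X\circ f$ from Proposition~\ref{prop: DR DL and DI}. The ingredients are the following:
\begin{itemize}
\item the chart map $\Phi^{-1}$, its derivative $D\Phi^{-1}$, and $D\Phi(\id)$;
\item the map $\xi\mapsto F_\xi^{-1}$, which is inversion in the tame Lie group $\Diff(M)$;
\item the composition map $(X,f)\mapsto X\circ f$, which is smooth tame as a right multiplication written in charts.
\end{itemize}
All of these are smooth tame. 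Hence the map $(\xi,\eta)\mapsto c_\xi\eta$ is a composition of smooth tame maps, linear in $\eta$, and therefore a smooth tame family of linear maps. The real-linear identification through $\Psi$ is a bundle map and causes no difficulty.

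\emph{Smooth tameness of $b$.} We have $b_{(\omega,\xi)}\eta=a(\mathcal{E}(\omega,\xi),c_\xi\eta)$. The map $\mathcal{E}$ is smooth tame, and $a$ is smooth tame by Lemma~\ref{lem: properties a_omega}. The one thing to check is that $\mathcal{E}(\omega,\xi)$ lands in the domain $\mathcal{W}^{(0)}_{\varepsilon'}$ of $a$. By Proposition~\ref{prop: small diff on small alm def}, or directly from the formula~\eqref{eq: formula theta(omega,f)}, $\Vert\mathcal{E}(\omega,\xi)\Vert_{0,\infty}$ is controlled by $\Vert\omega\Vert_{0,\infty}+\Vert\xi\Vert_{1,\infty}$. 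So after shrinking $\varepsilon$ the composition is defined and smooth tame.

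\emph{Invertibility of $c_\xi$.} The map $\tilde\eta\mapsto\xi+\tilde\eta\mapsto F_{\xi+\tilde\eta}F_\xi^{-1}$ is a composition of the chart $\Phi^{-1}$ and right translation, both locally invertible with smooth tame inverses. Hence $C_\xi$ is locally invertible with inverse
\begin{align*}
C_\xi^{-1}(\zeta)=\Phi\big(F_\zeta F_\xi\big)-\xi.
\end{align*}
Differentiating at $\zeta=0$ gives
\begin{align*}
c_\xi^{-1}\zeta=D\Phi(F_\xi)\big(D\Phi^{-1}(0)\zeta\circ F_\xi\big),
\end{align*}
which is again an explicit composition of smooth tame maps, jointly in $(\xi,\zeta)$. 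This expression is defined for $\xi\in\mathcal{U}^{(1)}_\varepsilon$, since $F_\xi$ is a diffeomorphism that lies in the chart domain for small $\varepsilon$.

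\emph{Invertibility of $b_{(\omega,\xi)}$.} We have $b^{-1}_{(\omega,\xi)}=c_\xi^{-1}\circ a^{-1}_{\mathcal{E}(\omega,\xi)}$. By Lemma~\ref{lem: properties a_omega}, $a^{-1}_{\omega'}$ exists and is smooth tame once $\Vert\omega'\Vert_{0,\infty}$ is small. The main obstacle is to ensure $\mathcal{E}(\omega,\xi)\in\mathcal{W}^{(0)}_{\varepsilon'}$ uniformly, which is presumably why the statement strengthens the hypotheses to $\xi\in\mathcal{U}^{(2)}_\varepsilon$ and $\omega\in\mathcal{W}^{(1)}_\varepsilon$. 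I would bound $\Vert\mathcal{E}(\omega,\xi)\Vert_{0,\infty}$ via \eqref{eq: formula theta(omega,f)} with $f=F_\xi$. Its entries $\dbar f$ and $\del\bar f$ are small in $C^0$ when $F_\xi$ is $C^1$-close to $\id$, while $\del f$ and $\overline{\del f}$ are $C^0$-close to $\id$. This gives $\Vert\mathcal{E}\Vert_{0,\infty}\le C(\Vert\xi\Vert_{1,\infty}+\Vert\omega\Vert_{0,\infty})$, which is enough. The stronger norms in the statement then give room to spare for continuity of the inverse family in the Fréchet topology. Finally, composing the smooth tame maps $\mathcal{E}$, $a^{-1}$ and $c^{-1}$ yields smooth tameness of $b^{-1}$.
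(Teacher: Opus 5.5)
Your argument is correct and shares the paper's skeleton. $c_\xi$ is built from the chart $\Phi=(\exp^h\circ\Psi)^{-1}$ and right translation, with exactly the paper's inverse $C_\xi^{-1}(\zeta)=\Phi(F_\zeta F_\xi)-\xi$. Likewise, $b_{(\omega,\xi)}^{-1}=c_\xi^{-1}\circ a^{-1}_{\mathcal{E}(\omega,\xi)}$ once $\Vert\mathcal{E}(\omega,\xi)\Vert_{0,\infty}$ is small.

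You differ from the paper in two places.

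\emph{Differentiating $C_\xi$.} You differentiate explicitly using Proposition~\ref{prop: DR DL and DI}. This spares you the paper's step of choosing a domain $\tilde{\mathcal{U}}$ for $C_\xi$ that is uniform in $\xi$, so that $(\xi,\tilde\eta)\mapsto C_\xi(\tilde\eta)$ is smooth tame on a product.

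\emph{Smallness of $\mathcal{E}(\omega,\xi)$.} This is the more substantial difference. The paper writes $\mathcal{E}(\omega,\xi)=\omega+\int_0^1 D_\xi\mathcal{E}(\omega,t\xi)\xi\,\dd t$ and applies the tame estimate of Lemma~\ref{lem: tame estimate family lin maps} to $D_\xi\mathcal{E}$ at $n=0$. That costs one derivative of $\omega$ and two of $\xi$, which is why the statement restricts to $\mathcal{W}^{(1)}_\varepsilon\times\mathcal{U}^{(2)}_\varepsilon$. You instead read a pointwise bound directly off \eqref{eq: formula theta(omega,f)} with $f=F_\xi$:
\begin{itemize}
\item there $\omega$ is only evaluated, at $F_\xi^{-1}(p)$, so no derivatives of $\omega$ enter;
\item in holomorphic coordinates $\dbar F_\xi=O(\Vert\xi\Vert_{1,\infty})$, and $\del F_\xi$ is the identity matrix up to $O(\Vert\xi\Vert_{1,\infty})$;
\item a Neumann series then controls the inverse factor uniformly.
\end{itemize}

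Your route is more elementary and needs no tame estimate for $D_\xi\mathcal{E}$ down to base $0$, which the paper uses without justification. It also yields invertibility of $b_{(\omega,\xi)}$ on all of $\mathcal{W}^{(0)}_\varepsilon\times\mathcal{U}^{(1)}_\varepsilon$, which contains the stated domain. So the stronger norms are not needed in your route. You can drop your remark that they give ``room to spare for continuity'', since smooth tameness of $b^{-1}$ follows from composition alone.

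One caveat: Proposition~\ref{prop: small diff on small alm def} does \emph{not} give your $C^0$ bound. Its first-order terms involve $\dbar\omega$ and $[\omega,a_\omega\xi]$, hence derivatives of $\omega$, and the remainder $R$ carries no quantitative estimate. Rely on the explicit formula alone for this step.
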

\begin{proof}
    Recall that
    \begin{align*}
    c_{\xi}\eta=DC_{\xi}(0)\eta,\quad C_{\xi}\colon \mathcal{U}\ni\tilde{\eta}\mapsto \left(\exp^h\circ \Psi\right)^{-1}\left(F_{\xi+\tilde{\eta}} F_{\xi}^{-1}\right)\in \A^0\left(M,T^{1,0}\right),
    \end{align*}
   where $\mathcal{U}\subset \mathcal{U}_{\varepsilon}^{(1)}$ is an open neighborhood of $0$ such that $\xi+\tilde{\eta}\in\mathcal{U}_{\varepsilon}^{(1)}$ for every $\tilde{\eta}\in\mathcal{U}$ and $F_{\xi+\tilde{\eta}}\circ F_{\xi}^{-1}\in \exp^h\circ \Psi(\mathcal{U}_{\varepsilon}^{(1)})$. Note that $C_{\xi}$ can be extended to a $C^0$-neighborhood $\tilde{\mathcal{U}}$ of $0$ such that $\xi+\tilde{\eta}\in(\exp^h\circ \Psi)^{-1}(\mathcal{V}_{\id})$ and $F_{\xi+\tilde{\eta}}F_{\xi}^{-1}\in\mathcal{V}_{\id}$ for every $\tilde{\eta}\in\tilde{\mathcal{U}}$, where $\mathcal{V}_{\id}$ denotes the local chart neighborhood of $\id\in\Diff(M)$. We may furthermore assume, possibly after shrinking $\varepsilon>0$, that $\tilde{\mathcal{U}}$ can be chosen independently of $\xi$, such that we obtain a well-defined map
   \begin{align*}
       C\colon \mathcal{U}_{\varepsilon}^{(1)}\times \tilde{\mathcal{U}}&\longrightarrow\A^0(M,T^{1,0})\\
       (\xi,\tilde{\eta})&\longmapsto C_{\xi}(\tilde{\eta}).
   \end{align*}
   Then $C_{\xi}(\tilde{\eta})$ is smooth tame in $\xi$ and $\tilde{\eta}$ as a composition of smooth tame maps and invertible for every small enough $\xi$. To compute the inverse, we note that
   \begin{align*}
       C_{\xi}(\tilde{\eta})=(\exp^h\circ \Psi)^{-1}\left(R_{F_{\xi}^{-1}}\circ\exp^h\circ \Psi(\xi+\tilde{\eta})\right).
   \end{align*}
   Since $R_{F_{\xi}^{-1}}^{-1}=R_{F_{\xi}}$, we find that
    \begin{align*}
        C_{\xi}^{-1}(\chi)&=(\exp^h\circ\Psi)^{-1}\circ R_{F_{\xi}}\circ \exp^h\circ \Psi(\chi)-\xi
        \\&=(\exp^h\circ \Psi)^{-1}\left(F_{\chi}F_{\xi}\right)-\xi,
    \end{align*}
    which is smooth tame. It follows that the family $c$ of linear maps is smooth tame and $c_{\xi}$ is invertible for small enough $\xi$ and the family of inverses $c_{\xi}^{-1}$ is again smooth tame. 

    For the statements about $b$ we observe that when shrinking $\varepsilon$, for $(\omega,\xi)\in\mathcal{W}^{(1)}_{\varepsilon}\times \mathcal{U}^{(2)}_{\varepsilon}$ we can make $\Vert \mathcal{E}(\omega,\xi)\Vert_{0,\infty}$ arbitrarily small. Indeed, using Proposition~\ref{prop: int equation DP} and the properties of the Fréchet integral (cf.\ Theorem I.2.2.1 in \cite{hamilton82}), we find
    \begin{align*}
        \Vert \mathcal{E}(\omega,\xi)\Vert_{0,\infty}&=\left\Vert \omega +\int_0^1 D_{\xi}\mathcal{E}(\omega,t\xi)\xi\,\dd t\right\Vert_{0,\infty}\\
        &\leq \Vert \omega\Vert_{0,\infty}+\int_0^1\left\Vert D_{\xi}\mathcal{E}(\omega,t\xi)\xi\right\Vert_{0,\infty}\,\dd t\\
        &\leq \Vert \omega\Vert_{0,\infty}+ \int_0^1 C\left((\Vert \omega\Vert_{1,\infty}+t\Vert \xi\Vert_{2,\infty}\right)\Vert \xi\Vert_{1,\infty}+\Vert \xi\Vert_{1,\infty})\,\dd t\\
        &\leq \varepsilon\left(1+C+\tfrac{3}{2}\varepsilon\right),
    \end{align*}
    for $(\omega,\xi)\in\mathcal{W}^{(1)}_{\varepsilon}\times \mathcal{U}^{(2)}_{\varepsilon}$, where we used Lemma~\ref{lem: tame estimate family lin maps} in the third step.
    
    It follows that for small enough $\varepsilon>0$, we can use Lemma~\ref{lem: properties a_omega} to conclude that $a_{\mathcal{E}(\omega,\xi)}\eta$ is a smooth tame family of linear maps which is invertible for every $\omega\in\mathcal{W}_{\varepsilon}^{(1)}$, $\xi\in \mathcal{U}_{\varepsilon}^{(2)}$ and the family of inverses is again smooth tame. Since $b_{(\omega,\xi)}\eta=a_{\mathcal{E}(\omega,\xi)}c_{\xi}\eta $, this proves the assertion.
    \end{proof}

\section{The deformation theorem}
\label{sect: def thm}
Let now $(M,J)$ be a compact complex manifold. When not stated otherwise, the bi-gradings appearing in this section are with respect to the complex structure $J$. Our aim is to show the following:

\begin{theorem}[Deformation theorem]
\label{thm: deformation theorem}
There exists an open neighborhood $\mathcal{W}\subset\mathcal{H}^{0,1}(M,T^{1,0})$ of $0$, a family $\tilde{\mathcal{M}}=\{\omega_t:t\in \mathcal{W}\}$ of almost complex
deformations of $J$, and an analytic obstruction map 
\begin{equation*}
\Phi\colon\mathcal{W}\rightarrow \mathcal{H}^{0,2}(M,T^{1,0})  \end{equation*}
such that the almost complex deformations $\mathcal{M}:=\{\omega_t\in\tilde{\mathcal{M}}:\Phi(t)=0\}$ are precisely the integrable ones. Any sufficiently small complex deformation of $J$ is equivalent to at least one member
of the family $\mathcal{M}$. In the case that the obstruction map vanishes, $\mathcal{M}$ is a locally complete family of complex deformations.    
\end{theorem}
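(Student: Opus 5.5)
We follow Kuranishi's strategy, replacing the Banach-space arguments by Theorem~\ref{thm: nash moser} and Theorem~\ref{thm: impl fn thm}. The proof has three steps: construct the Kuranishi family and the obstruction map, identify which members are integrable, and prove completeness via a slice for the action of small diffeomorphisms.

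\textbf{Step 1: the family $\tilde{\mathcal{M}}$ and the map $\Phi$.} Consider the map
\[P(\omega)=\omega+\tfrac12\Q[\omega,\omega]\]
on a neighborhood of $0$ in $\A^{0,1}(M,T^{1,0})$. It is smooth tame by Example~\ref{ex: bracket} and Example~\ref{ex: Q}. Its derivative is $DP(\omega)\alpha=\alpha+\Q[\omega,\alpha]$. The correction $\varphi(\omega)\alpha=\Q[\omega,\alpha]$ lies in $\im\dbar^*$, vanishes at $\omega=0$, and $\Delta\varphi(\omega)\alpha=\dbar^*\dbar\G\dbar^*[\omega,\alpha]$-type terms give a differential operator of order at most $2$ in $\alpha$. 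Hence Proposition~\ref{prop: inverse family from elliptic} yields a smooth tame family of inverses, and Theorem~\ref{thm: nash moser} shows that $P$ is a local diffeomorphism near $0$.

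Define $\omega_t:=P^{-1}(t)$ for $t$ in a small neighborhood $\mathcal{W}$ of $0$ in $\mathcal{H}^{0,1}$, and set
\[\Phi(t):=\Hproj[\omega_t,\omega_t].\]
Smoothness of $t\mapsto\omega_t$ as a map $M\times\mathcal{W}\to\Lambda^{0,1}\otimes T^{1,0}$ follows from smooth tameness of $P^{-1}$. Analyticity of $\Phi$ is the one point not covered by the smooth tame theory. Here I would use that $\omega_t$ solves an elliptic equation depending holomorphically (complex-linearly) on $t$, so its holomorphic dependence on $t$ can be argued, for example by showing that $\bar\partial_t\omega_t$ satisfies a linear homogeneous equation with only the zero solution.

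\textbf{Step 2: integrability exactly when $\Phi(t)=0$.} By construction $\dbar^*\omega_t=0$. Write $\psi=\dbar\omega_t-\tfrac12[\omega_t,\omega_t]$. Applying Proposition~\ref{prop: H G and Q}(2) to $[\omega_t,\omega_t]$ and using $\dbar\omega_t=-\tfrac12\dbar\Q[\omega_t,\omega_t]$ gives
\[\psi=-\tfrac12\Hproj[\omega_t,\omega_t]-\tfrac12\Q\dbar[\omega_t,\omega_t].\]
If $\Phi(t)=0$, then Lemma~\ref{lem: properties dbar and bracket} gives $\dbar[\omega,\omega]=2[\dbar\omega,\omega]=2[\psi,\omega]$, using the Jacobi identity to kill $[[\omega,\omega],\omega]$. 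Hence $\psi=-\Q[\psi,\omega_t]$. The tame estimates then give $\|\psi\|_{n}\le C\|\omega_t\|_{n+1}\|\psi\|_{n}$, and since $\omega_t$ is small this forces $\psi=0$. Conversely, if $\psi=0$, then $\Hproj\psi=-\tfrac12\Phi(t)=0$. Thus $\mathcal{M}=\{\omega_t:\Phi(t)=0\}$ consists precisely of the integrable members.

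\textbf{Step 3: completeness.} Given a small complex deformation $\omega$, we seek $\xi$ with $\dbar^*\mathcal{E}(\omega,\xi)=0$, imposing $\xi\perp\mathcal{H}^0(T^{1,0})$ to remove the kernel. Consider
\[A(\xi,\omega)=\dbar^*\mathcal{E}(\omega,\xi)\in\im\dbar^*\subset\A^{0,0}.\]
By Proposition~\ref{prop: DEw}, for integrable $\mathcal{E}(\omega,\xi)$ the last term drops. (For the implicit function theorem I would restrict $\omega$ to the Maurer--Cartan locus, or simply keep the term, since it is small.) We obtain
\[D_\xi A\,\eta=-\dbar^*\dbar\, b\eta+\dbar^*[\mathcal{E},b\eta],\]
a small perturbation of $\dbar^*\dbar\circ b$. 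Using Lemma~\ref{lem: properties b and c} to invert $b$, together with Theorem~\ref{thm: elliptic pdo} applied to $\eta\mapsto\Delta$-type operators on $(\im\dbar^*)$ augmented by the harmonic part, we produce a smooth tame right inverse. Theorem~\ref{thm: impl fn thm} then gives $\xi=B(\omega)$ depending smooth tamely on $\omega$.

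With this gauge, $\omega':=\mathcal{E}(\omega,B(\omega))$ is integrable and satisfies $\dbar^*\omega'=0$. Maurer--Cartan together with Proposition~\ref{prop: H G and Q} gives $P(\omega')=\Hproj\omega'=:t$, so $\omega'=\omega_t$ by local injectivity of $P$. For a family $\omega'(s')$, composing with $B$ gives the smooth $\tau(s')=\Hproj\mathcal{E}(\omega'(s'),B(\omega'(s')))$ and $f_{s'}=F_{B(\omega'(s'))}$, which yields local completeness when $\Phi\equiv0$, so that $\mathcal{M}=\tilde{\mathcal{M}}$ is a smooth family.

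\textbf{Expected obstacle.} I expect the main difficulty to be the surjectivity, with a tame inverse, of $D_\xi A$ uniformly near $(0,0)$ including non-integrable $\omega$. The residual term $\iota_{\overline{c\eta}}(\cdots)$ is only of order zero in $\eta$, so it should be absorbable by the elliptic perturbation argument of Theorem~\ref{thm: elliptic pdo}, but the bookkeeping of $\mathcal{H}^0$ and of $\im\dbar^*$ needs care.
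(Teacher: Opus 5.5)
Your Steps 1--2 and the final assembly follow the paper's argument: the Kuranishi map is inverted via Proposition~\ref{prop: inverse family from elliptic} and Theorem~\ref{thm: nash moser}, integrability follows from the contraction $\psi=-\Q[\psi,\omega_t]$, the slice is $\dbar^*\mathcal{E}(\omega,\xi)=0$ with $\xi\perp\mathcal{H}^0$, and injectivity of the Kuranishi map identifies $\mathcal{E}(\omega,B(\omega))$ with some $\omega_t$. However, your Kuranishi map has the wrong sign. With the paper's convention $\dbar\omega-\tfrac12[\omega,\omega]=0$, a $\dbar^*$-closed solution satisfies $\omega-\tfrac12\Q[\omega,\omega]=\Hproj\omega$, so you must use $\sigma(\omega)=\omega-\tfrac12\Q[\omega,\omega]$.

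With your $P(\omega)=\omega+\tfrac12\Q[\omega,\omega]$, Step~3 actually gives $P(\omega')=\Hproj\omega'+\Q[\omega',\omega']$, so the identification $\omega'=\omega_t$ fails. In Step~2, your relation $\dbar\omega_t=-\tfrac12\dbar\Q[\omega_t,\omega_t]$ leaves an extra term $-\dbar\Q[\omega_t,\omega_t]$ in $\psi$; the formula for $\psi$ you display is the one for the correct sign. Your family is $t\mapsto-\sigma^{-1}(-t)$, which is the Kuranishi family for the opposite convention $T^{0,1}=\{X+\omega(X)\}$. With $\sigma$ in place of $P$, and $D\sigma(\omega)\alpha=\alpha-\Q[\omega,\alpha]$, Steps~1--2 go through as written.

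Step~3 is organized differently from the paper. The paper applies $\Q$ instead of $\dbar^*$, so that $D_\xi A$ is $-(\id-\Hproj)b_{(\omega,\xi)}$ plus terms of order $\le 0$. It then proves a separate lemma that $(\id-\Hproj)b_{(\omega,\xi)}$ is invertible on $\mathcal{H}^0(M,T^{1,0})^{\perp}$, using the auxiliary maps $q,\psi_1,\psi_2,\varphi$ and a codimension count, and finally invokes Proposition~\ref{prop: inverse family from elliptic}.

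Your plan of first inverting $b$ on all of $\A^0(M,T^{1,0})$ is workable and arguably cleaner. Since $c_\xi b_{(\omega,\xi)}^{-1}=a_{\mathcal{E}(\omega,\xi)}^{-1}$, the operator
$L\zeta:=D_\xi A\,b_{(\omega,\xi)}^{-1}\zeta=-\Delta\zeta+\dbar^*[\mathcal{E},\zeta]-\dbar^*\iota_{\overline{a_{\mathcal{E}}^{-1}\zeta}}\bigl(\dbar\mathcal{E}-\tfrac12[\mathcal{E},\mathcal{E}]\bigr)$
is a genuine smooth tame family of second-order elliptic operators. The composition with $F_\xi^{-1}$ hidden in $c_\xi$ drops out, so Theorem~\ref{thm: elliptic pdo} applies with $i,j$ the inclusion of and projection onto $\mathcal{H}^0(M,T^{1,0})$. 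Because $L$ takes values in $\im\dbar^*$, this gives, for each $\beta\in\im\dbar^*$ and $y\in\mathcal{H}^0$, a unique $\zeta(\beta,y)$ with $L\zeta=\beta$ and $\Hproj\zeta=y$.

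The bookkeeping you left open is then finite-dimensional. The constraint $\Hproj b_{(\omega,\xi)}^{-1}\zeta=0$ is a linear equation for $y$, whose operator $y\mapsto\Hproj b_{(\omega,\xi)}^{-1}\zeta(0,y)$ is the identity of $\mathcal{H}^0$ at $(\xi,\omega)=(0,0)$ and hence invertible nearby. This yields a two-sided smooth tame inverse of $D_\xi A$ on $\mathcal{H}^0(M,T^{1,0})^{\perp}$.

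Do carry this out rather than settling for a right inverse. The implicit function theorem as stated does not assert $B(0)=0$, and it is the resulting local uniqueness that gives $f_0=\id$ and $\tau(0)=0$ in the definition of obtainability. Also, restricting $\omega$ to the Maurer--Cartan locus is not available, since that locus need not be a tame Fr\'echet manifold. So the $\iota_{\overline{c_\xi\eta}}$-term must be kept, as you eventually do.

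Finally, your argument for analyticity of $\Phi$ fills a point the paper passes over. The paper only observes that $\alpha\mapsto\Hproj[\alpha,\alpha]$ is quadratic, whereas analyticity in $t$ needs holomorphy of $t\mapsto\omega_t$. That holomorphy follows as you indicate: $D\sigma(\omega)$ is $\C$-linear and injective, which forces $\partial_{\bar t}\omega_t=0$.
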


The proof of Theorem~\ref{thm: deformation theorem} is split into two parts. First, we will construct the family and the obstruction map in Section~\ref{sect: descr family}. Then we will prove that the family is locally complete in Section~\ref{sect: loc cplt family}. We will closely follow Kuranishi's line of thought (in particular in Section~\ref{sect: descr family}) complemented by the techniques of Hamilton--Nash--Moser theory.

\subsection{Description of the family}
\label{sect: descr family}

Let 
\begin{align*}
    \tilde{\mathcal{V}}_1:=\left\{\omega\in \A^{0,1}(M,T^{1,0}):\dbar \omega -\tfrac{1}{2}[\omega,\omega]=0=\dbar^*\omega\right\}\subset \A^{0,1}(M,T^{1,0}).
\end{align*}
We will see that as a set, the family of complex deformations $\mathcal{M}$ is contained in $\tilde{\mathcal{V}}_1$. Our strategy will be the following: First we show that $\tilde{\mathcal{V}}_1$ is contained in another set $\tilde{\mathcal{V}}_2$ and that for small enough $\varepsilon>0$ and large enough $k\in \N_0$, the intersection $\mathcal{W}_{\varepsilon}^{(k)}\cap\tilde{\mathcal{V}}_2$ is parametrized by a neighborhood of 0 in $\mathcal{H}^{0,1}(M,T^{1,0})$. Then we will find an analytic map $$\phi\colon\mathcal{W}_{\varepsilon}^{(k)}\cap\tilde{\mathcal{V}}_2\longrightarrow \mathcal{H}^{0,2}(M,T^{1,0})$$ such that $\phi(\varphi)=0$ if and only if $\varphi$ is integrable and hence contained in $\mathcal{M}$.

In the following lemma, we define the set $\tilde{\mathcal{V}}_2$ and prove that it contains the set $\tilde{\mathcal{V}}_1$ as defined above.

\begin{lemma}
    $\tilde{\mathcal{V}}_1\subset \tilde{\mathcal{V}}_2:=\left\{\omega\in \A^{0,1}(M,T^{1,0}):\omega- \tfrac{1}{2}\Q[\omega,\omega]\in\mathcal{H}^{0,1}(M,T^{1,0})\right\}$.
\end{lemma}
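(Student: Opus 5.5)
Let $\omega\in\tilde{\mathcal{V}}_1$. The plan is to apply the Hodge-theoretic identity $\Hproj+\dbar\Q+\Q\dbar=\id$ of Proposition~\ref{prop: H G and Q} to $\omega$ itself, and then substitute the two defining equations of $\tilde{\mathcal{V}}_1$.

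First we write
\begin{align*}
    \omega=\Hproj\omega+\dbar\Q\omega+\Q\dbar\omega.
\end{align*}
For the middle term, we use $\Q=\dbar^*\G$ together with $\G\dbar^*=\dbar^*\G$. This gives $\Q\omega=\dbar^*\G\omega=\G\dbar^*\omega=0$, because $\dbar^*\omega=0$. For the last term, the Maurer--Cartan equation~\eqref{eq: Maurer-Cartan} gives $\dbar\omega=\tfrac{1}{2}[\omega,\omega]$, so $\Q\dbar\omega=\tfrac12\Q[\omega,\omega]$. Combining these,
\begin{align*}
    \omega-\tfrac{1}{2}\Q[\omega,\omega]=\Hproj\omega\in\mathcal{H}^{0,1}(M,T^{1,0}),
\end{align*}
which shows $\omega\in\tilde{\mathcal{V}}_2$.

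The argument is essentially immediate. The only point needing care is justifying $\Q\omega=0$ from $\dbar^*\omega=0$, and this rests on the commutation $\G\dbar^*=\dbar^*\G$ from Proposition~\ref{prop: H G and Q}. One could alternatively argue via the Hodge decomposition: $\dbar^*\omega=0$ forces $\omega\in\ker\dbar^*=\mathcal{H}\oplus\im\dbar^*$, the orthogonal complement of $\im\dbar$, and $\Q=\dbar^*\G$ kills this complement, consistent with $\Q\dbar^*=\Q\Hproj=0$. No smallness of $\omega$ is needed.
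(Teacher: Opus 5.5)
Your proof is correct and is essentially the paper's argument. The paper computes $\Delta\omega=\dbar^*\dbar\omega=\tfrac12\dbar^*[\omega,\omega]$ and applies $\Hproj+\G\Delta=\id$, whereas you apply the equivalent identity $\Hproj+\dbar\Q+\Q\dbar=\id$, which is the same identity rewritten using the commutation of $\G$ with $\dbar$ and $\dbar^*$; in both versions $\dbar^*\omega=0$ removes the $\dbar\dbar^*$-term and the Maurer--Cartan equation turns the remaining term into $\tfrac12\Q[\omega,\omega]$.
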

\begin{proof}
 Let $\omega\in \tilde{\mathcal{V}}_1$. Then
\begin{align*}
    \Delta \omega =(\dbar\dbar^*+\dbar^*\dbar)\omega=\dbar^*\dbar\omega=\tfrac{1}{2}\dbar^*[\omega,\omega],
\end{align*}
where we used $\dbar^*\omega=0$ for the second equality and $\dbar\omega=\tfrac{1}{2}[\omega,\omega]$ for the last one. Applying the Green's operator $\G$ to both sides and using $\Hproj+\G\Delta=\id$ yields
\begin{align*}
    0=\G\Delta\omega-\tfrac{1}{2}\G\dbar^*[\omega,\omega]=\omega-\Hproj\omega-\tfrac{1}{2}\Q[\omega,\omega].
\end{align*}
Hence,
\begin{align*}
    \omega-\tfrac{1}{2}\Q[\omega,\omega]=\Hproj \omega\in \mathcal{H}^{0,1}(M,T^{1,0})
\end{align*}
and thus, $\omega\in \tilde{\mathcal{V}}_2$.
\end{proof}

Next, we want to show that for some $\varepsilon>0$ and $k\in\N_0$, the intersection $\mathcal{W}_{\varepsilon}^{(k)}\cap\tilde{\mathcal{V}}_2$ is parametrized by a neighborhood of 0 in $\mathcal{H}^{0,1}(M,T^{1,0})$. To this end, we define the map
\begin{align*}
    \sigma\colon \A^{0,1}(M,T^{1,0})&\longrightarrow\A^{0,1}(M,T^{1,0})\\
    \omega&\longmapsto \omega- \tfrac{1}{2}\Q[\omega,\omega].
\end{align*}

\begin{lemma}
    The map $\sigma$ is smooth tame.
\end{lemma}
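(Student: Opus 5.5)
The plan is to write $\sigma=\id-\tfrac12\,\Q\circ \beta$, where
\[
\beta\colon \A^{0,1}(M,T^{1,0})\to \A^{0,2}(M,T^{1,0}),\qquad \beta(\omega)=[\omega,\omega].
\]
Since the Nash--Moser category is closed under composition and sums, it suffices to show that $\id$, $\beta$ and $\Q$ are smooth tame. The identity is tame linear of degree $0$, and a tame linear map is smooth tame: its derivative is itself, constant in the base point, and all higher derivatives vanish.

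For $\beta$, we use the local formula $[\alpha,\beta]=\dd\bar z^I\wedge\dd\bar z^K\otimes[\alpha_I^{\;j}\partial_j,\beta_K^{\;l}\partial_l]$. In local coordinates, $[\omega,\omega]$ at a point depends only on the $1$-jet of $\omega$ at that point, and it depends polynomially (quadratically) on this jet. Hence $\beta$ factors through a smooth fibre-preserving map $J^1(\Lambda^{0,1}\otimes T^{1,0})\to\Lambda^{0,2}\otimes T^{1,0}$, so it is a non-linear partial differential operator of order $1$, and Proposition~\ref{prop: nonlin part smooth tame} gives that $\beta$ is smooth tame.

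Alternatively, one can argue directly. We have $D\beta(\omega)h=2[\omega,h]$, $D^2\beta(\omega)\{h,k\}=2[h,k]$, and all higher derivatives vanish. The tameness of these maps then follows from the bilinear estimate of Example~\ref{ex: bracket}, $\Vert[\alpha,\beta]\Vert_{n}\le C\Vert\alpha\Vert_{n+1}\Vert\beta\Vert_{n+1}$. On a bounded neighbourhood of each point this yields the required estimates of degree $1$ in the sense of the definition of tame maps; for the quadratic term one uses interpolation, or simply the jet-bundle argument, to get an estimate linear in the highest norm.

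For $\Q$, Example~\ref{ex: Q} states that $\Q$ is tame linear, satisfying $\Vert\Q\xi\Vert_{n,2}\le C\Vert\xi\Vert_{n-1,2}$. It is continuous because this estimate holds for every $n$, so $\Q$ is a continuous linear map between Fréchet spaces. As before, a tame linear continuous map is smooth tame.

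The only point needing care, and the mild obstacle, is to confirm that the bracket is genuinely a non-linear partial differential operator, so that Proposition~\ref{prop: nonlin part smooth tame} applies. The local expression involves first derivatives of the coefficients $\omega_I^{\;j}$, multiplied by other coefficients. This is a smooth function on the $1$-jet bundle, and it is well defined independently of coordinates because the bracket itself is globally defined.

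Combining the three pieces with the chain rule and the fact that compositions of smooth tame maps are smooth tame shows that $\sigma=\id-\tfrac12\Q\circ\beta$ is smooth tame. Its degree of tameness is $0$, since $\Q$ gains one derivative and the bracket loses one.
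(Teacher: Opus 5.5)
Your proposal is correct and follows the paper's route: the paper's one-line proof simply cites Examples~\ref{ex: bracket} and~\ref{ex: Q}, which amounts to the decomposition $\sigma=\id-\tfrac12\Q\circ[\cdot,\cdot]$ with $\Q$ tame linear that you use. You also correctly note that the bilinear bound of Example~\ref{ex: bracket} does not by itself give a tame estimate (one linear in the top norm) for $\omega\mapsto[\omega,\omega]$, and your fix of treating it as a first-order non-linear differential operator via Proposition~\ref{prop: nonlin part smooth tame} tightens the paper's brief justification.
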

\begin{proof}
This follows from Example~\ref{ex: bracket} and \ref{ex: Q}.
\end{proof}

\begin{proposition}
\label{prop: existence VF}
 For $\varepsilon>0$ small enough and large enough $k\in\N_0$, the equation $D\sigma(\omega)\alpha=\beta$ has a unique solution $\alpha$ for every $\omega\in \mathcal{W}^{(k)}_{\varepsilon}$, $\beta\in \A^{0,1}(M,T^{1,0})$. Moreover, the family $V(\omega)\beta=\alpha$ of solutions is a smooth tame family of linear maps.
\end{proposition}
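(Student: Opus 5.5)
Since $\sigma(\omega)=\omega-\tfrac12\Q[\omega,\omega]$ and the bracket is symmetric on $(0,1)$-forms (Lemma~\ref{lem: properties dbar and bracket}(1) with $p=q=1$), the derivative is
\begin{align*}
D\sigma(\omega)\alpha=\alpha-\Q[\omega,\alpha].
\end{align*}
This has the form $A(\omega)\alpha=\alpha+\varphi(\omega)\alpha$ with $\varphi(\omega)\alpha:=-\Q[\omega,\alpha]$. The plan is to verify the hypotheses of Proposition~\ref{prop: inverse family from elliptic} for this family on $\mathcal{U}:=\mathcal{W}^{(k)}_{\varepsilon}$, which is an open neighborhood of $0$ in the tame Fréchet space $\A^{0,1}(M,T^{1,0})$. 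That proposition then gives both the unique solvability of $D\sigma(\omega)\alpha=\beta$ and the smooth tameness of the family of inverses $V(\omega)\beta$.

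The hypotheses are checked in turn. First, $(\omega,\alpha)\mapsto \alpha-\Q[\omega,\alpha]$ is a smooth tame family of linear maps. It is bilinear in $(\omega,\alpha)$ and a composition of the bracket (Example~\ref{ex: bracket}) with the tame linear operator $\Q$ (Example~\ref{ex: Q}). Continuous bilinear tame maps are smooth tame, since all derivatives are again such bilinear or linear expressions. Second, $\varphi(0)=0$ is immediate. Third, $\varphi(\omega)\alpha=-\dbar^*\G[\omega,\alpha]\in\im\dbar^*$ because $\Q=\dbar^*\G$.

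Fourth, we need that $\alpha\mapsto\Delta\varphi(\omega)\alpha$ is a linear differential operator of degree $\le 2$. Using Proposition~\ref{prop: H G and Q}, we compute
\begin{align*}
\Delta\Q=\Delta\dbar^*\G=\dbar^*\Delta\G=\dbar^*(\id-\Hproj)=\dbar^*,
\end{align*}
since $\dbar^*\Hproj=0$ and $\Delta$ commutes with $\dbar^*$ and $\G$. Hence $\Delta\varphi(\omega)\alpha=-\dbar^*[\omega,\alpha]$. For fixed smooth $\omega$, this is a first-order operator in $\alpha$ with coefficients depending on up to two derivatives of $\omega$, so its order is at most $1\le 2$.

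The step I expect to be the main obstacle is the ellipticity required inside the proof of Proposition~\ref{prop: inverse family from elliptic}. There one needs $\ell(\omega)=\Delta-\dbar^*[\omega,\cdot]$ to be elliptic of degree $2$ for all $\omega\in\mathcal{U}$, uniformly enough to apply Theorem~\ref{thm: elliptic pdo}. Since the perturbation has order one, the principal symbol of $\ell(\omega)$ equals that of $\Delta$, so $\ell(\omega)$ is elliptic for every $\omega$. What remains is to make the neighborhood concrete: Proposition~\ref{prop: inverse family from elliptic} shrinks to some open $\mathcal{U}'\ni 0$, and we must show it contains a set of the form $\mathcal{W}^{(k)}_{\varepsilon}$. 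Since the topology of $\A^{0,1}(M,T^{1,0})$ is generated by the $C^k$-norms, which are increasing in $k$, every open neighborhood of $0$ contains some $\mathcal{W}^{(k)}_{\varepsilon}$. Explicitly, invertibility of $\tilde\ell(\omega)$ persists under perturbations of $\tilde\ell(0)$ that are small in a fixed finite $C^k$-norm of the coefficients, namely up to second derivatives of $\omega$, so $k=2$ with $\varepsilon$ small should suffice. Finally, uniqueness of the solution $\alpha$ follows from the invertibility of $A(\omega)=D\sigma(\omega)$ on all of $\A^{0,1}(M,T^{1,0})$, which is the last assertion of Proposition~\ref{prop: inverse family from elliptic}.
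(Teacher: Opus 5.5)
Your proposal follows the paper's proof. The paper computes $D\sigma(\omega)\alpha=\alpha-\Q[\alpha,\omega]=\alpha-\dbar^*\G[\alpha,\omega]$ and invokes Proposition~\ref{prop: inverse family from elliptic}, and your hypothesis checks, including $\Delta\Q=\dbar^*$, correctly spell out that one line.

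One correction is needed: $\alpha\mapsto\dbar^*[\omega,\alpha]$ is of order two, not one.
\begin{itemize}
\item The bracket $[\omega,\alpha]$ is already first order in $\alpha$, with leading coefficient $\omega$ itself, and $\dbar^*$ adds a further derivative.
\item Hence the principal symbol of $\ell(\omega)=\Delta-\dbar^*[\omega,\cdot\,]$ differs from that of $\Delta$ by a term of size $O(\Vert\omega\Vert_{0,\infty})$.
\item So $\ell(\omega)$ is elliptic only for $\omega$ small, not for every $\omega$.
\end{itemize}
This does not break the argument. The hypothesis ``degree $\le 2$'' still holds. Ellipticity near $\ell(0)=\Delta$ is exactly what the proof of Proposition~\ref{prop: inverse family from elliptic} obtains by shrinking to $\mathcal{U}'$. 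But the reason you give for ellipticity should be replaced by this openness argument.
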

\begin{proof}
We compute
\begin{align*}
D\sigma(\omega)\alpha=\alpha-\Q[\alpha,\omega]=\alpha-\dbar^*\G[\alpha,\omega].
\end{align*}
The statement follows then from Proposition~\ref{prop: inverse family from elliptic}.
\end{proof}

\begin{corollary}
    For $\varepsilon>0$  small enough and $k\in\N_0$ large enough, $\sigma$ maps $\mathcal{W}_{\varepsilon}^{(k)}$ bijectively onto a neighborhood $\tilde{\mathcal{W}}_{\varepsilon}\subset \A^{0,1}(M,T^{1,0})$ of $0$ such that the inverse map
    \begin{align*}
        \sigma^{-1}\colon \tilde{\mathcal{W}}_{\varepsilon}\longrightarrow \mathcal{W}^{(k)}_{\varepsilon}
    \end{align*}
    is smooth tame.
\end{corollary}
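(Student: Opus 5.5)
The plan is to apply the Nash--Moser inverse function theorem (Theorem~\ref{thm: nash moser}) to $\sigma$ on the open set $\mathcal{U}=\mathcal{W}^{(k)}_{\varepsilon}$. Here $\varepsilon$ and $k$ are chosen as in Proposition~\ref{prop: existence VF}. We already know that $\sigma$ is smooth tame and that $\A^{0,1}(M,T^{1,0})$ is a tame Fréchet space. Proposition~\ref{prop: existence VF} supplies a unique solution $\alpha=V(\omega)\beta$ of $D\sigma(\omega)\alpha=\beta$ for every $\omega\in\mathcal{W}^{(k)}_{\varepsilon}$ and every $\beta$, and it says that $V$ is a smooth tame family of linear maps on $\mathcal{W}^{(k)}_{\varepsilon}\times\A^{0,1}(M,T^{1,0})$. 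Hence all hypotheses of Theorem~\ref{thm: nash moser} hold. We conclude that $\sigma$ is locally invertible on $\mathcal{W}^{(k)}_{\varepsilon}$ and that every local inverse is smooth tame.

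The theorem only gives local invertibility near each point, while the corollary asks for bijectivity of $\sigma$ on the whole of $\mathcal{W}^{(k)}_{\varepsilon}$, possibly after shrinking $\varepsilon$. I expect this to be the main obstacle. For injectivity I would use Proposition~\ref{prop: int equation DP}. If $\sigma(\omega_1)=\sigma(\omega_2)$ with $\omega_1,\omega_2\in\mathcal{W}^{(k)}_{\varepsilon}$, then, since $\mathcal{W}^{(k)}_{\varepsilon}$ is convex and $\sigma$ is quadratic,
\begin{align*}
0=\sigma(\omega_1)-\sigma(\omega_2)=(\omega_1-\omega_2)-\tfrac12\Q[\omega_1-\omega_2,\omega_1+\omega_2]=D\sigma\bigl(\tfrac12(\omega_1+\omega_2)\bigr)(\omega_1-\omega_2).
\end{align*}
The identity is exact because $\sigma$ is quadratic; the symmetry of the bracket on $(0,1)$-forms from Lemma~\ref{lem: properties dbar and bracket} is used here. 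The midpoint $\tfrac12(\omega_1+\omega_2)$ lies in $\mathcal{W}^{(k)}_{\varepsilon}$, so the uniqueness part of Proposition~\ref{prop: existence VF} gives $\omega_1=\omega_2$. Injectivity is therefore global on $\mathcal{W}^{(k)}_{\varepsilon}$ and needs no further shrinking.

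Next I set $\tilde{\mathcal{W}}_{\varepsilon}:=\sigma(\mathcal{W}^{(k)}_{\varepsilon})$ and show it is open. Each point of $\mathcal{W}^{(k)}_{\varepsilon}$ has a neighborhood that $\sigma$ maps homeomorphically onto an open set, by the local invertibility from Theorem~\ref{thm: nash moser}. Thus $\sigma$ is an open map, and $\tilde{\mathcal{W}}_{\varepsilon}$ is an open neighborhood of $\sigma(0)=0$.

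It remains to see that the global inverse $\sigma^{-1}\colon\tilde{\mathcal{W}}_{\varepsilon}\to\mathcal{W}^{(k)}_{\varepsilon}$ is smooth tame. By injectivity, $\sigma^{-1}$ agrees near each point of $\tilde{\mathcal{W}}_{\varepsilon}$ with one of the local inverses, and each of these is smooth tame. Smooth tameness is a local condition, since tameness is defined by estimates in a neighborhood of each point. Therefore $\sigma^{-1}$ is smooth tame.

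One technicality to check is that the neighborhood produced in Proposition~\ref{prop: existence VF} is really the convex set $\mathcal{W}^{(k)}_{\varepsilon}$ itself and not some smaller open set. If it is only a smaller open set, I would shrink $\varepsilon$ until the ball $\mathcal{W}^{(k)}_{\varepsilon}$ lies inside it, using that $\mathcal{U}''$ in Proposition~\ref{prop: inverse family from elliptic} contains a $\Vert\cdot\Vert_{k,\infty}$-ball for suitable $k$. Convexity of the ball is what the midpoint argument needs.
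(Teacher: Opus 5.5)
Your proof is correct. It rests on the same two ingredients as the paper, whose entire proof is a one-line citation of Theorem~\ref{thm: nash moser} and Proposition~\ref{prop: existence VF}. The difference is in how bijectivity on the whole ball is obtained.

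The paper leaves this step implicit. The natural reading is to use local invertibility at $0$ only, then shrink $\varepsilon$ and enlarge $k$ until $\mathcal{W}^{(k)}_{\varepsilon}$ sits inside the domain $U$ of that local inverse. This is possible because every open neighborhood of $0$ in a graded Fr\'echet space contains such a ball. The image is then open, since $\sigma|_U$ is a homeomorphism onto an open set, and $\sigma^{-1}$ is just the restriction of the smooth tame local inverse. The phrase ``for $\varepsilon$ small enough and $k$ large enough'' permits exactly this, so the step you flag as the main obstacle is not really one.

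Your route instead proves injectivity on the full convex ball on which $D\sigma$ is invertible. You use the exact identity $\sigma(\omega_1)-\sigma(\omega_2)=D\sigma\bigl(\tfrac12(\omega_1+\omega_2)\bigr)(\omega_1-\omega_2)$, which relies on $\sigma$ being quadratic and on the bracket being symmetric on $\A^{0,1}(M,T^{1,0})$. You then patch the local inverses together using openness of $\sigma$ and the local nature of smooth tameness.

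What your route buys is that no shrinking beyond Proposition~\ref{prop: existence VF} is needed, and bijectivity on the stated ball is explicitly justified rather than left implicit. The cost is that it depends on the special algebraic form of $\sigma$. The shrinking argument works for any map satisfying the hypotheses of Theorem~\ref{thm: nash moser}.
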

\begin{proof}
    This follows from Theorem~\ref{thm: nash moser} and Proposition~\ref{prop: existence VF}.
\end{proof}

In particular, setting $\mathcal{W}_{\varepsilon}:=\tilde{\mathcal{W}}_{\varepsilon}\cap \mathcal{H}^{0,1}(M,T^{1,0})$ and $\mathcal{V}_{\varepsilon}:=\tilde{\mathcal{V}}_2\cap \mathcal{W}^{(k)}_{\varepsilon}$, we obtain a smooth tame map
\begin{align*}
    \varphi:=\sigma^{-1}|_{\mathcal{W}_{\varepsilon}}\colon \mathcal{W}_{\varepsilon}\longrightarrow \mathcal{V}_{\varepsilon}.
\end{align*}

\begin{corollary}
\label{cor: hol family}
The family $\{ \varphi(t): t\in \mathcal{W}_{\varepsilon}\}$ is a smooth family of almost complex deformations.    
\end{corollary}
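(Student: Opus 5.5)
We need to check three things from the definition of a smooth family of almost complex deformations: $\varphi(0)=0$; each $\varphi(t)$ satisfies condition~\eqref{eq: condition almost complex}; and smoothness of $(p,t)\mapsto\varphi(t)(p)$ as a map $M\times\mathcal{W}_{\varepsilon}\to\Lambda^{0,1}T^*\otimes T^{1,0}$. The parameter space $\mathcal{W}_{\varepsilon}$ is an open neighborhood of $0$ in the finite dimensional space $\mathcal{H}^{0,1}(M,T^{1,0})$, which is the setting required by the definition.

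First, $\sigma(0)=0$, so $\sigma^{-1}(0)=0$ and $\varphi(0)=0$. Second, $\varphi$ takes values in $\mathcal{W}^{(k)}_{\varepsilon}$, so $\Vert\varphi(t)\Vert_{0,\infty}\le\Vert\varphi(t)\Vert_{k,\infty}<\varepsilon$. After shrinking $\varepsilon$ if necessary, every such $\omega$ satisfies \eqref{eq: condition almost complex}. Indeed, $L_\omega\cap\overline{L_\omega}\neq\{0\}$ at $p$ would force $-\omega(X)+X=-\overline{\omega(Y)}+\bar Y$ for some $X,Y\in T^{0,1}_p$. Comparing types gives $X=-\overline{\omega(Y)}$ and $\bar Y=-\omega(X)$, so $|X|\le|\omega|^2|X|$ pointwise. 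Hence $X=0$ once $\Vert\omega\Vert_{0,\infty}<1$. So every $\varphi(t)$ is an almost complex deformation.

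Third, and this is the only step needing care, we must upgrade Fréchet smoothness to joint smoothness on $M\times\mathcal{W}_{\varepsilon}$. The map $\varphi$ is the restriction of the smooth tame map $\sigma^{-1}$ to the finite dimensional affine slice $\mathcal{W}_{\varepsilon}$. Since the inclusion of $\mathcal{H}^{0,1}$ is linear and continuous, $\varphi\colon\mathcal{W}_{\varepsilon}\to\A^{0,1}(M,T^{1,0})$ is smooth in the Fréchet sense. Fix a basis of $\mathcal{H}^{0,1}$. By the chain rule, all partial derivatives $\partial_t^\beta\varphi(t)$ exist and are continuous maps $\mathcal{W}_{\varepsilon}\to\A^{0,1}(M,T^{1,0})$, i.e.\ continuous into every $C^m$-norm. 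Evaluation at points, and application of $m$-th order local differential operators in $p$, are continuous linear maps on $\A^{0,1}$. Hence all mixed derivatives $\partial_p^\alpha\partial_t^\beta$ of $(p,t)\mapsto\varphi(t)(p)$ exist and are jointly continuous. One justifies commuting $\partial_t$ with evaluation by using the difference-quotient definition of $D\varphi$, which converges in each $C^m$-norm and in particular uniformly with all $p$-derivatives. This shows that $(p,t)\mapsto\varphi(t)(p)$ is $C^\infty$, completing the proof.
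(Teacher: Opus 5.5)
Your argument is correct and matches the paper's reasoning. The paper states the corollary without proof, as an immediate consequence of $\varphi=\sigma^{-1}|_{\mathcal{W}_\varepsilon}$ being smooth tame on a neighborhood of $0$ in the finite-dimensional space $\mathcal{H}^{0,1}(M,T^{1,0})$, with $\varphi(0)=0$ and values in the small set $\mathcal{W}^{(k)}_\varepsilon$, where \eqref{eq: condition almost complex} holds automatically. You make explicit the two points the paper leaves to the reader, namely the pointwise verification of \eqref{eq: condition almost complex} once $\Vert\omega\Vert_{0,\infty}<1$, and the passage from Fr\'echet smoothness in $t$ to joint smoothness of $(p,t)\mapsto\varphi(t)(p)$ on $M\times\mathcal{W}_\varepsilon$.
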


Next, we will identify the obstruction map, whose vanishing set consists of the integrable deformations in $\mathcal{V}_{\varepsilon}$.

\begin{lemma}
\label{lem: analytic obstruction}
    For $\varepsilon>0$ small enough, $\varphi\in \mathcal{V}_{\varepsilon}$ is in $\mathcal{S}_{\varepsilon}:=\tilde{\mathcal{V}}_1\cap \mathcal{W}^{(k)}_{\varepsilon}$ if and only if $\Phi(\varphi):=\Hproj[\varphi,\varphi]=0$.
\end{lemma}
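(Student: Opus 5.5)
The plan is to follow Kuranishi's argument. One direction is immediate and the other is a contraction estimate. Throughout, $\varphi\in\mathcal{V}_{\varepsilon}$ means $\varphi\in\mathcal{W}^{(k)}_{\varepsilon}$ and $\varphi-\tfrac12\Q[\varphi,\varphi]=t$ for some $t\in\mathcal{H}^{0,1}(M,T^{1,0})$.

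First I record that every $\varphi\in\mathcal{V}_\varepsilon$ satisfies $\dbar^*\varphi=0$. This holds because $\dbar^*t=0$ and $\dbar^*\Q=0$ by Proposition~\ref{prop: H G and Q}. So membership in $\mathcal{S}_\varepsilon=\tilde{\mathcal{V}}_1\cap\mathcal{W}^{(k)}_\varepsilon$ reduces to the Maurer--Cartan equation.

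\emph{Forward direction.} If $\dbar\varphi=\tfrac12[\varphi,\varphi]$, then $[\varphi,\varphi]$ is $\dbar$-exact. Exact forms are orthogonal to harmonic forms by the Hodge decomposition, so $\Hproj[\varphi,\varphi]=0$.

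\emph{Converse, algebraic part.} Assume $\Hproj[\varphi,\varphi]=0$ and set $\psi:=\dbar\varphi-\tfrac12[\varphi,\varphi]$. Since $\dbar t=0$, we get $\dbar\varphi=\tfrac12\dbar\Q[\varphi,\varphi]$. Using $\Hproj+\dbar\Q+\Q\dbar=\id$ and the hypothesis, this gives $\dbar\varphi=\tfrac12[\varphi,\varphi]-\tfrac12\Q\dbar[\varphi,\varphi]$, that is,
\[
\psi=-\tfrac12\Q\dbar[\varphi,\varphi].
\]
By Lemma~\ref{lem: properties dbar and bracket}(2) with $p=1$, we have $\dbar[\varphi,\varphi]=[\dbar\varphi,\varphi]-[\varphi,\dbar\varphi]=2[\dbar\varphi,\varphi]$, where the last step uses the symmetry rule (1). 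Substituting $\dbar\varphi=\psi+\tfrac12[\varphi,\varphi]$ gives $\dbar[\varphi,\varphi]=2[\psi,\varphi]+[[\varphi,\varphi],\varphi]$. The Jacobi identity (3) with $\varphi=\psi=\eta$ of degree $1$ yields $3[\varphi,[\varphi,\varphi]]=0$, and hence $[[\varphi,\varphi],\varphi]=0$. Therefore
\[
\psi=-\Q[\psi,\varphi].
\]

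\emph{Converse, analytic part.} The step I expect to be the main obstacle is showing that $\psi=-\Q[\psi,\varphi]$ forces $\psi=0$ for $\varphi$ small. This needs correct derivative counting. The crude estimate of Example~\ref{ex: bracket} loses a derivative in both factors and does not close. Instead I will use the finer structure of the bracket: it is a bilinear first-order operator whose terms have the form $(\partial\psi)\cdot\varphi$ or $\psi\cdot(\partial\varphi)$. For fixed $n>\dim M/2+1$, the Sobolev algebra property then gives
\[
\Vert[\psi,\varphi]\Vert_{n-1,2}\le C\Vert\psi\Vert_{n,2}\Vert\varphi\Vert_{n,2}.
\]
Combining this with Example~\ref{ex: Q}, $\Vert\Q\xi\Vert_{n,2}\le C\Vert\xi\Vert_{n-1,2}$, yields $\Vert\psi\Vert_{n,2}\le C\Vert\varphi\Vert_{n,2}\Vert\psi\Vert_{n,2}$.

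Now take $k\ge n$, so that $\Vert\varphi\Vert_{n,2}\le C'\Vert\varphi\Vert_{k,\infty}<C'\varepsilon$. Shrinking $\varepsilon$ so that $CC'\varepsilon<1$ forces $\psi=0$, so $\varphi\in\tilde{\mathcal{V}}_1$. If the Sobolev bilinear estimate proves awkward to justify, the fallback is Hölder norms $C^{n,\alpha}$, where $\Q$ gains one derivative by Schauder theory and the product estimate is elementary.
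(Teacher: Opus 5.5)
Your proof is correct and follows the paper's argument. You make the same algebraic reduction, via the $\dbar$-Leibniz rule and the Jacobi identity, to the linear fixed-point equation $\psi=-\Q[\psi,\varphi]$ (your $\psi$ is $-\tfrac12$ times the paper's $\eta=\Q\dbar[\varphi,\varphi]$), and then use the same Sobolev contraction estimate. Your explicit check that $\dbar^*\varphi=0$ supplies a step the paper leaves implicit.

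One correction to your commentary: the ``finer'' bilinear estimate $\Vert[\psi,\varphi]\Vert_{n-1,2}\le C\Vert\psi\Vert_{n,2}\Vert\varphi\Vert_{n,2}$ is exactly Example~\ref{ex: bracket} applied at level $n-1$. So that example does close when combined with Example~\ref{ex: Q}, which is precisely how the paper concludes.
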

\begin{proof}
    We first note that for any $\varphi\in \mathcal{V}_{\varepsilon}$, we have 
    \begin{align*}
        \dbar\varphi-\tfrac{1}{2}[\varphi,\varphi]=&\tfrac{1}{2}\dbar \Q[\varphi,\varphi]-\tfrac{1}{2}[\varphi,\varphi]\\
        =&-\tfrac{1}{2}\Q \dbar [\varphi,\varphi]-\tfrac{1}{2}\Hproj[\varphi,\varphi],
    \end{align*}
    where we used $\varphi\in \mathcal{V}_{\varepsilon}$ for the first equality and $\Hproj+\dbar\Q+\Q\dbar=\id$ for the second equality. Since the images of $\Q$ and $\Hproj$ are orthogonal to each other, it follows that 
    \begin{align}
    \label{eq: integrability}    \dbar\varphi-\tfrac{1}{2}[\varphi,\varphi]=0\quad \iff \quad \Q\dbar[\varphi,\varphi]=0\text{ and }\Hproj[\varphi,\varphi]=0.
    \end{align}
    We claim that for $\varepsilon>0$ small enough, $\Hproj [\varphi,\varphi]=0$ already implies $\Q\dbar[\varphi,\varphi]=0$. To see this, suppose $\varphi\in\mathcal{V}_{\varepsilon}$ such that $\Hproj[\varphi,\varphi]=0$. Then,
    \begin{align*}
        \Q\dbar[\varphi,\varphi]=-\dbar\Q[\varphi,\varphi]+[\varphi,\varphi].
    \end{align*}
    Using the compatibility of the bracket $[\cdot,\cdot]$ and $\dbar$ as stated in Lemma~\ref{lem: properties dbar and bracket} and $\varphi\in \tilde{\mathcal{V}}_2$, we compute
    \begin{align*}
        \Q\dbar[\varphi,\varphi]=2\Q[\dbar\varphi,\varphi]=\Q[\dbar\Q[\varphi,\varphi],\varphi]=\Q[-\Q\dbar[\varphi,\varphi],\varphi]+\Q[[\varphi,\varphi],\varphi].
    \end{align*}
    By the Jacobi identity (see Lemma~\ref{lem: properties dbar and bracket}), we must have $\left[[\varphi,\varphi],\varphi\right]=0$. Therefore, setting $\eta:=\Q\dbar[\varphi,\varphi]$, we find
    \begin{align*}
        \eta=-\Q[\eta,\varphi].
    \end{align*}
    It follows from Example~\ref{ex: bracket} and~\ref{ex: Q} that (for $k\geq \dim M/2+1$)
    \begin{align*}
        \Vert \eta\Vert_{k,2}=\Vert \Q[\eta,\varphi]\Vert_{0,2}\leq C\Vert\eta\Vert_{k,2}\Vert\varphi\Vert_{k,2}\leq C\varepsilon\Vert\eta\Vert_{k,2}.
    \end{align*}
    In particular, for $\varepsilon <1/C$, this implies that $\eta=0$ and hence $\Q\dbar[\varphi,\varphi]=0$. By equation~\eqref{eq: integrability} this shows that $\varphi\in\mathcal{S}_{\varepsilon}$.
\end{proof}

\subsection{Local completeness of the family}
\label{sect: loc cplt family}
In the last step of the proof of Theorem \ref{thm: deformation theorem}, we need to show that every sufficiently small complex deformation $\omega$ can be obtained from $\mathcal{M}$ via a diffeomorphism, i.e.\ there is some small $\xi\in \A^0(M,T^{1,0})$ such that $\dbar^*\mathcal{E}(\omega,\xi)=0$. Moreover, given a smooth family of small (almost) complex deformations $\omega_t$, we need to show that there is a family $\xi_t\subset \A^0(M,T^{1,0})$ depending smoothly on $t$ such that $\dbar^*\mathcal{E}(\omega_t,\xi_t)=0$.

In the following, we will denote by ${\mathcal{H}^0(M,T^{1,0})}^{\perp}$, the orthogonal complement of $\mathcal{H}^0(M,T^{1,0})$ in $\A^0(M,T^{1,0})$. The elements of ${\mathcal{H}^0(M,T^{1,0})}^{\perp}$ are the $\dbar^*$-exact forms in $\A^0(M,T^{1,0})$.
\begin{proposition}
\label{prop: locally complete}
    There is a neighborhood $\mathcal{W}$ of the origin in $\A^{0,1}(M,T^{1,0})$ and a neighborhood $\mathcal{U}$ of the origin in ${\mathcal{H}^0(M,T^{1,0})}^{\perp}$ such that for any $\omega\in \mathcal{W}$ there is a unique solution $\xi=\xi(\omega)\in \mathcal{U}$ to the equation
    \begin{align*}
        \dbar^*\mathcal{E}(\omega,\xi)=0.
    \end{align*} The solution map $\xi(\omega)$ is smooth tame.
\end{proposition}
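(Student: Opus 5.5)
We apply Hamilton's implicit function theorem (Theorem~\ref{thm: impl
fn thm}) to the smooth tame map
\begin{align*}
A\colon \mathcal{W}^{(k)}_{\varepsilon}\times \left(\mathcal{U}^{(l)}_{\varepsilon}\cap{\mathcal{H}^0(M,T^{1,0})}^{\perp}\right)\longrightarrow \dbar^*\A^{0,1}(M,T^{1,0}),\qquad (\omega,\xi)\longmapsto \dbar^*\mathcal{E}(\omega,\xi),
\end{align*}
with $\xi$ the variable to be solved for and $\omega$ the parameter. Since $\mathcal{E}$ is smooth tame and $\dbar^*$ is tame linear, $A$ is smooth tame. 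Its target lies in $\im\dbar^*\subset\A^0(M,T^{1,0})$, which is exactly ${\mathcal{H}^0}^{\perp}$ and a tame direct summand via the Hodge projections; the domain ${\mathcal{H}^0}^{\perp}$ is likewise a tame Fréchet space. At $(\omega,\xi)=(0,0)$ we have $A=0$ because $\mathcal{E}(0,0)=0$. The task is to produce a smooth tame family of right inverses of $D_\xi A$ on a whole neighborhood of $(0,0)$. Uniqueness, and the fact that the solution lies in a fixed $\mathcal{U}$, will come from this family actually being a two-sided inverse.

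By Proposition~\ref{prop: DEw},
\begin{align*}
D_\xi A(\omega,\xi)\eta=-\dbar^*\dbar(b_{(\omega,\xi)}\eta)+\dbar^*[\mathcal{E},b_{(\omega,\xi)}\eta]-\dbar^*\iota_{\overline{c_\xi\eta}}\bigl(\dbar\mathcal{E}-\tfrac12[\mathcal{E},\mathcal{E}]\bigr),
\end{align*}
where $\mathcal{E}=\mathcal{E}(\omega,\xi)$. For integrable $\omega$ the last term vanishes, because $\mathcal{E}(\omega,\xi)$ is again integrable as a diffeomorphic image of an integrable structure. This gives a clean formula on the integrable locus. For general $\omega$ in a neighborhood, however, the implicit function theorem needs invertibility, so we keep the whole operator. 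It is a second order linear differential operator in $\eta$, with coefficients depending smooth tamely on $(\omega,\xi)$. Using Lemma~\ref{lem: properties b and c}, substitute $\eta=b_{(\omega,\xi)}^{-1}\zeta$ (respectively $c_\xi^{-1}$ in the last term). The operator then becomes
\begin{align*}
\zeta\mapsto -\Delta\zeta+\dbar^*[\mathcal{E},\zeta]+(\text{lower order terms, small with }\mathcal{E}),
\end{align*}
where we used $\dbar^*\zeta=0$ on $\A^0$, so that $\dbar^*\dbar=\Delta$ there. At $(0,0)$ this is $-\Delta$, which is invertible on ${\mathcal{H}^0}^{\perp}$.

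For the family of inverses, the most robust route is Theorem~\ref{thm: elliptic pdo} applied directly. Set $\ell(\omega,\xi)\zeta:=D_\xi A(\omega,\xi)b^{-1}_{(\omega,\xi)}\zeta$, a smooth tame family of linear operators. It is elliptic of order $2$ near $(0,0)$, because its principal symbol is a small perturbation of that of $-\Delta$: all other terms are either first order or carry a coefficient controlled by a $C^k$-norm of $\mathcal{E}$, which is small by the estimate in the proof of Lemma~\ref{lem: properties b and c}. Augment $\ell$ with $i\colon\mathcal{H}^0\hookrightarrow\A^0$ and $j=\Hproj$. Then $\tilde\ell(0,0)$ is invertible, so the augmented family has smooth tame inverses near $(0,0)$. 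We then must check that $\ell(\omega,\xi)$ restricts to an isomorphism ${\mathcal{H}^0}^{\perp}\to\im\dbar^*$. The image already lies in $\im\dbar^*$ because every term starts with $\dbar^*$. For $\zeta\in{\mathcal{H}^0}^{\perp}$ we have $j(\zeta)=0$, so $\tilde\ell(\zeta,0)=(\ell\zeta,0)$; given a target in $\im\dbar^*$, the $i$-component of the preimage is forced to be zero by orthogonality of $\im\dbar^*$ and $\mathcal{H}^0$. This is the argument of Proposition~\ref{prop: inverse family from elliptic}. Composing with $b^{-1}$ and $b$ (to stay in ${\mathcal{H}^0}^{\perp}$ we project with $\id-\Hproj$, and must check this still yields an inverse, possibly by reparametrizing the unknown) gives the smooth tame family $V$ required by Theorem~\ref{thm: impl
fn thm}.

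The main obstacle is this interplay. The unknown is constrained to ${\mathcal{H}^0}^{\perp}$, while the natural substitution $\zeta=b\eta$ does not preserve that subspace. If this causes trouble, I would instead parametrize diffeomorphisms by $\zeta\in{\mathcal{H}^0}^{\perp}$ through $\xi=b^{-1}$-corrected coordinates, or solve for $\xi$ in all of $\A^0$ modulo $\mathcal{H}^0$ and fix the $\mathcal{H}^0$-component to zero. Once $V$ is a genuine two-sided inverse on a neighborhood, Theorem~\ref{thm: impl
fn thm} gives the smooth tame solution map $\xi(\omega)$. Uniqueness in a small $\mathcal{U}$ follows from injectivity of $D_\xi A$ along segments, using Proposition~\ref{prop: int equation DP} together with a tame estimate on $V$.
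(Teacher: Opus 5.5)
Your architecture is the same as the paper's. You apply the implicit function theorem to $\dbar^*\mathcal{E}$; the paper uses the equivalent $\Q\mathcal{E}$, whose leading term is $-(\id-\Hproj)b_{(\omega,\xi)}$ rather than $-\Delta b_{(\omega,\xi)}$. Both arguments then use Proposition~\ref{prop: DEw}, reparametrize the unknown through $b_{(\omega,\xi)}$, and treat the result as an elliptic perturbation of the Laplacian augmented by $\mathcal{H}^0(M,T^{1,0})$. The reparametrization is forced: $D_\xi A$ itself is not a differential operator in $\eta$, because $c_\xi$ involves composition with $F_\xi^{-1}$. By contrast, $\ell=D_\xi A\circ b^{-1}_{(\omega,\xi)}$ is a differential operator, since $c_\xi b^{-1}_{(\omega,\xi)}=a^{-1}_{\mathcal{E}(\omega,\xi)}$. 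Write $\mathcal{H}^{0\perp}$ for $\mathcal{H}^0(M,T^{1,0})^{\perp}$. Your restriction argument via $\tilde\ell$ is correct, but it only shows that $D_\xi A(\omega,\xi)$ maps $b^{-1}_{(\omega,\xi)}(\mathcal{H}^{0\perp})$ isomorphically onto $\mathcal{H}^{0\perp}$. The statement constrains the unknown to $\mathcal{H}^{0\perp}$ and asserts uniqueness there, so it needs invertibility of $D_\xi A(\omega,\xi)$ on $\mathcal{H}^{0\perp}$ itself, and $b_{(\omega,\xi)}$ does not preserve $\mathcal{H}^{0\perp}$. You name this as the main obstacle but leave it at ``must check'' and ``possibly''. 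This is exactly the step the paper isolates in a separate lemma, so your proof has a genuine gap here. As written it goes through only when $\mathcal{H}^0(M,T^{1,0})=0$.

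The missing ingredient is a finite-dimensional transversality argument.
\begin{itemize}
\item Since $b_{(0,0)}=\id$ and $\mathcal{H}^0$ is finite-dimensional, $\Hproj b^{-1}_{(\omega,\xi)}\colon\mathcal{H}^0\to\mathcal{H}^0$ is invertible for $(\omega,\xi)$ near $(0,0)$; call its inverse $q_{(\omega,\xi)}$.
\item From $q_{(\omega,\xi)}$ the paper builds an explicit smooth tame inverse $p_{(\omega,\xi)}\eta=b^{-1}_{(\omega,\xi)}(\eta+\varphi(\omega,\xi)\eta)$ of $(\id-\Hproj)\circ b_{(\omega,\xi)}$ on $\mathcal{H}^{0\perp}$. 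Here $\varphi(\omega,\xi)$ has finite rank with values in $\mathcal{H}^0$.
\item It then substitutes $\alpha=p_{(\omega,\xi)}\tilde\alpha$ with $\tilde\alpha\in\mathcal{H}^{0\perp}$.
\end{itemize}
This also works in your second-order formulation. Because $\dbar\Hproj=0$, you get $\dbar^*\dbar(b_{(\omega,\xi)}\alpha)=\Delta\tilde\alpha$ exactly. The finite-rank remainder $\Hproj b_{(\omega,\xi)}p_{(\omega,\xi)}\tilde\alpha$ then enters only the terms carrying $\mathcal{E}(\omega,\xi)$. Those terms vanish at $(0,0)$ and must be absorbed into the perturbation; the paper feeds them into Proposition~\ref{prop: inverse family from elliptic}.

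Your fallback of solving for $\xi$ in all of $\A^0(M,T^{1,0})$ does not avoid the issue. There $b^{-1}_{(\omega,\xi)}\circ(\ell|_{\mathcal{H}^{0\perp}})^{-1}$ is indeed a smooth tame right inverse, so Theorem~\ref{thm: impl fn thm} gives existence. It gives neither $\xi\in\mathcal{H}^{0\perp}$ nor uniqueness, and imposing the constraint brings you back to the same transversality question.
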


\begin{rem}
    Note that if $\mathcal{H}^0(M,T^{1,0})=0$, Proposition~\ref{prop: locally complete} implies that every almost complex deformation $\omega\in\mathcal{W}$ is related to an almost complex deformation $\omega'$ such that $\dbar^*\omega'=0$ via a unique diffeomorphism $F_{\xi}$, $\xi\in\mathcal{U}$. Thus, the representative $\omega'$ is locally unique. If the obstruction map vanishes, Theorem~\ref{thm: deformation theorem} yields a so-called \emph{miniversal} family of complex deformations and the condition $\dbar^*\omega=0$ can be thought of as taking a local slice for the action of the diffeomorphisms on the space of (almost) complex structures.

    Since $H^0(M,T^{1,0})\cong \mathcal{H}^0(M,T^{1,0})$ is the Lie algebra of the Lie group of complex automorphisms of $M$, the condition $H^0(M,T^{1,0})=0$ is satisfied if and only if the group of complex automorphisms of $M$ is discrete.
\end{rem}

For the proof of Proposition~\ref{prop: locally complete}, we wish to apply Theorem~\ref{thm: impl
fn thm}. For this, note that $\dbar^*\mathcal{E}(\omega,\xi)=0$ if and only if $\Q\mathcal{E}(\omega,\xi)=0$. For some $\varepsilon>0$, $k\in\N$, let ${\mathcal{W}}^{(k)}_{\varepsilon}$ be as before and 
\begin{align*}
    \tilde{\mathcal{U}}^{(k)}_{\varepsilon}:=\left\{\xi\in {\mathcal{H}^0(M,T^{1,0})}^{\perp}:\Vert\xi \Vert_k<\varepsilon\right\}.
\end{align*}
Then, for $\varepsilon$ small enough and large enough $k$ the map
\begin{align*}
    A\colon \tilde{\mathcal{U}}^{(k)}_{\varepsilon}\times{\mathcal{W}}^{(k)}_{\varepsilon}&\longrightarrow {\mathcal{H}^0(M,T^{1,0})}^{\perp}\\
    (\xi,\omega)&\longmapsto \Q\mathcal{E}(\omega,\xi)
\end{align*}
is well-defined and smooth tame as a composition of smooth tame maps. Using Proposition~\ref{prop: DEw}, for $(\xi,\omega)\in \tilde{\mathcal{U}}_{\varepsilon}\times {\mathcal{W}}_{\varepsilon}$, $\alpha\in{\mathcal{H} ^0(M,T^{1,0})}^{\perp}$ we compute 
\begin{align*}
    D_{\xi}A(\xi,\omega)\alpha=&\Q D_{\xi}\mathcal{E}(\omega,\xi)\alpha
    \\=&-\Q\dbar(b_{(\omega,\xi)}\alpha)+\Q[\mathcal{E}(\omega,\xi),b_{(\omega,\xi)}\alpha]-\Q \iota_{\overline{c_{\xi}\alpha}}\left(\dbar\mathcal{E}(\omega,\xi)-\tfrac{1}{2}[\mathcal{E}(\omega,\xi),\mathcal{E}(\omega,\xi)]\right)\\
    =&-(\id-\Hproj)(b_{(\omega,\xi)}\alpha)+\Q[\mathcal{E}(\omega,\xi),b_{(\omega,\xi)}\alpha]-\Q \iota_{\overline{c_{\xi}\alpha}}\left(\dbar\mathcal{E}(\omega,\xi)-\tfrac{1}{2}[\mathcal{E}(\omega,\xi),\mathcal{E}(\omega,\xi)]\right),
\end{align*}    
where we used Proposition~\ref{prop: H G and Q} in the last step as well as the fact that $\Q$ maps $\A^0(M,T^{1,0})$ to $0$.

\begin{lemma}
    For $\varepsilon>0$ small enough, the map
    \begin{align*}
   (\id-\Hproj)\circ b_{(\omega,\xi)}\colon {\mathcal{H} ^0(M,T^{1,0})}^{\perp}\longrightarrow {\mathcal{H} ^0(M,T^{1,0})}^{\perp}
\end{align*}
is invertible for every $(\xi,\omega)\in\tilde{\mathcal{U}}_{\varepsilon}\times \mathcal{W}_{\varepsilon}$. The family of inverses is a smooth tame family of linear maps.
\end{lemma}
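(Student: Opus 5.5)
We prove the lemma by a perturbation argument. The map $(\id-\Hproj)\circ b_{(\omega,\xi)}$ differs from the identity by a finite-rank correction, and we invert it explicitly. Write
\[
(\id-\Hproj)\circ b_{(\omega,\xi)} = b_{(\omega,\xi)} - \Hproj\circ b_{(\omega,\xi)} .
\]
By Lemma~\ref{lem: properties b and c}, for $\varepsilon>0$ small enough (and with the index $k$ in $\tilde{\mathcal{U}}^{(k)}_{\varepsilon}\times\mathcal{W}^{(k)}_{\varepsilon}$ at least $2$), $b_{(\omega,\xi)}\colon \A^0(M,T^{1,0})\to\A^0(M,T^{1,0})$ is invertible, and the family of inverses $b^{-1}_{(\omega,\xi)}$ is smooth tame. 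Also $b_{(0,0)}=a_0c_0=\id$, because $C_0(\tilde\eta)=\tilde\eta$ and hence $c_0=\id$.

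Given $\beta\in{\mathcal{H}^0(M,T^{1,0})}^{\perp}$, we want $\alpha\in{\mathcal{H}^0(M,T^{1,0})}^{\perp}$ with $b_{(\omega,\xi)}\alpha-\Hproj b_{(\omega,\xi)}\alpha=\beta$. Equivalently, $b_{(\omega,\xi)}\alpha=\beta+h$ for some $h\in\mathcal{H}^0(M,T^{1,0})$, which we then fix by requiring $\Hproj\alpha=0$. So we set
\[
\alpha=b_{(\omega,\xi)}^{-1}(\beta+h)
\]
and impose $\Hproj b_{(\omega,\xi)}^{-1}(\beta+h)=0$. This is a finite-dimensional linear condition on $h$:
\[
T_{(\omega,\xi)}h=-\Hproj b^{-1}_{(\omega,\xi)}\beta, \qquad T_{(\omega,\xi)}:=\Hproj\circ b^{-1}_{(\omega,\xi)}|_{\mathcal{H}^0}\colon\mathcal{H}^0(M,T^{1,0})\to\mathcal{H}^0(M,T^{1,0}).
\]
At $(\omega,\xi)=(0,0)$ we have $T=\id$. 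The map $(\omega,\xi)\mapsto T_{(\omega,\xi)}$ is continuous into the finite-dimensional space $\End(\mathcal{H}^0)$ (indeed smooth, as a composition of the smooth tame family $b^{-1}$ with the inclusion of $\mathcal{H}^0$ and the finite-rank projection $\Hproj$). Hence $T_{(\omega,\xi)}$ remains invertible after shrinking $\varepsilon$, and $(\omega,\xi)\mapsto T^{-1}_{(\omega,\xi)}$ is smooth by Cramer's rule.

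The inverse is therefore given explicitly by
\[
\alpha=b_{(\omega,\xi)}^{-1}\bigl(\beta-T_{(\omega,\xi)}^{-1}\Hproj b_{(\omega,\xi)}^{-1}\beta\bigr).
\]
We check that this is the unique solution. The construction gives $\Hproj\alpha=0$ and $(\id-\Hproj)b\alpha=(\id-\Hproj)(\beta+h)=\beta$, so $\alpha$ solves the equation. For uniqueness, any solution $\alpha$ determines $h=\Hproj b\alpha$, and $h$ must satisfy the $T$-equation, which has a unique solution.

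Finally, tameness. The formula is a composition of smooth tame families, namely $b^{-1}$, $\Hproj$, and the inclusion, together with finite-dimensional smooth maps. The terms involving $\Hproj$ and $T^{-1}$ factor through the finite-dimensional space $\mathcal{H}^0$, so they satisfy tame estimates of degree $0$: any norm of an element of $\mathcal{H}^0$ is bounded by its $L^2$ norm, which is in turn bounded by $\Vert\cdot\Vert_0$. The resulting family is continuous and tame, so Proposition~\ref{prop: inverse family is smooth tame} (or directly the closure of smooth tame maps under composition) shows that the family of inverses is smooth tame.

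The main obstacle is the uniform smallness needed for the invertibility of $T_{(\omega,\xi)}$. I would use continuity of $b^{-1}$ in the $C^0$ or $L^2$ topology near $(0,0)$, obtained from the tame estimate of Lemma~\ref{lem: tame estimate family lin maps} at base level, to get $\Vert T_{(\omega,\xi)}-\id\Vert<1$ on a small enough $\Vert\cdot\Vert_k$-ball. On that ball a Neumann series gives $T^{-1}$ directly.
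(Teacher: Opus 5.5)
Your proof is correct and is essentially the paper's argument. Your $T_{(\omega,\xi)}=\Hproj\circ b^{-1}_{(\omega,\xi)}|_{\mathcal{H}^0}$ is exactly the map whose inverse the paper calls $q_{(\omega,\xi)}$. Once $\psi_1$ and $\varphi(\omega,\xi)$ are simplified on ${\mathcal{H}^0(M,T^{1,0})}^{\perp}$, the paper's inverse $p_{(\omega,\xi)}$ reduces to your formula $b^{-1}_{(\omega,\xi)}\bigl(\beta-T^{-1}_{(\omega,\xi)}\Hproj b^{-1}_{(\omega,\xi)}\beta\bigr)$.

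The differences are organizational. You verify existence and uniqueness directly, where the paper uses a separate injectivity claim followed by a graph and codimension argument. You also justify the invertibility of $T_{(\omega,\xi)}$ via $T_{(0,0)}=\id$ and continuity; this is equivalent to the paper's claim $b^{-1}_{(\omega,\xi)}(\mathcal{H}^0)\cap{\mathcal{H}^0}^{\perp}=\{0\}$, which the paper only asserts. Your continuity argument is fine provided $k$ is taken large enough.
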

\begin{proof}
    We follow similar ideas as in the proof of \cite[Lem III.2.6]{Omori2016}. Recall that for $\eta\in \mathcal{A}^0(M,T^{1,0})$, the map $b_{(\omega,\xi)}$ was defined as
    \begin{align*}
        b_{(\omega,\xi)}\eta=c_{\xi}\eta+\iota_{\overline{c_{\xi}\eta}}\mathcal{E}(\omega,\xi),
    \end{align*}
    where
    \begin{align*}
    c_{\xi}\eta=DC_{\xi}(0)\eta,\quad C_{\xi}\colon \mathcal{U}\ni\tilde{\eta}\mapsto \left(\exp^h\circ \Psi\right)^{-1}\left(F_{\xi+\tilde{\eta}} F_{\xi}^{-1}\right)\in \A^0\left(M,T^{1,0}\right),
    \end{align*}
    for a sufficiently small $C^0$-neighborhood $\mathcal{U}\subset \A^0\left(M,T^{1,0}\right)$ of zero. Recall furthermore that by Lemma~\ref{lem: properties b and c}, for $\varepsilon>0$ small enough $b_{(\omega,\xi)}$ is invertible for every $(\xi,\omega)\in\tilde{\mathcal{U}}_{\varepsilon}\times \mathcal{W}_{\varepsilon}$ and the family of inverses $b_{(\omega,\xi)}^{-1}$ is smooth tame. This together with the observation that
    \begin{align*}
     \mathcal{H}^0(M,T^{1,0})\cap b_{(\omega,\xi)}\left({\mathcal{H} ^0(M,T^{1,0})}^{\perp}\right)=\{0\}   
    \end{align*}
    for all $(\xi,\omega)\in\tilde{\mathcal{U}}_{\varepsilon}\times \mathcal{W}_{\varepsilon}$ and $\varepsilon>0$ small enough shows that 
    \begin{align*}
   (\id-\Hproj)\circ b_{(\omega,\xi)}\colon {\mathcal{H} ^0(M,T^{1,0})}^{\perp}\longrightarrow {\mathcal{H} ^0(M,T^{1,0})}^{\perp}
\end{align*}
is injective. To show that it is surjective, we introduce several auxiliary families of linear maps. First, we observe that, again for $\varepsilon>0$ small enough and $(\xi,\omega)\in\tilde{\mathcal{U}}_{\varepsilon}\times \mathcal{W}_{\varepsilon}$,
\begin{align*}
    b_{(\omega,\xi)}^{-1}\left(\mathcal{H}^0(M,T^{1,0})\right)\cap \mathcal{H}^0(M,T^{1,0})^{\perp}=\{0\}.
\end{align*}
Since $\mathcal{H}^0(M,T^{1,0})$ is finite-dimensional, this implies that 
\begin{align*}
    \Hproj b_{(\omega,\xi)}^{-1}\colon \mathcal{H}^0(M,T^{1,0})\longrightarrow\mathcal{H}^0(M,T^{1,0})
\end{align*}
is invertible and we denote the inverse map by $q_{(\omega,\xi)}$. Being a smooth map to a finite dimensional vector space, $q_{(\omega,\xi)}$ is a smooth tame family of linear maps. We furthermore define 
\begin{align*}
    \psi_1(\omega,\xi)&\colon \mathcal{A}^0(M,T^{1,0})\longrightarrow \mathcal{H}^0(M,T^{1,0})^{\perp},\quad \eta\longmapsto \eta-b_{(\omega,\xi)}^{-1}q_{(\omega,\xi)}\Hproj \eta,\\
    \psi_2(\omega,\xi)&\colon \mathcal{A}^0(M,T^{1,0})\longrightarrow b_{(\omega,\xi)}^{-1}\left(\mathcal{H}^0(M,T^{1,0})\right),\quad \eta\longmapsto b_{(\omega,\xi)}^{-1}q_{(\omega,\xi)}\Hproj \eta,\\
    \varphi(\omega,\xi)&\colon \mathcal{A}^0(M,T^{1,0})\longrightarrow \mathcal{H}^0(M,T^{1,0}),\quad \eta \longmapsto \Hproj b_{(\omega,\xi)}\psi_1(\omega,\xi)b_{(\omega,\xi)}^{-1}\eta. 
\end{align*}
We claim that 
\begin{align*}
    b_{(\omega,\xi)}\left(\mathcal{H}^0(M,T^{1,0})^{\perp}\right)=\left\{\eta +\varphi(\omega,\xi)\eta:\eta\in\mathcal{H}^0(M,T^{1,0})^{\perp}\right\}.
\end{align*}
To see this, given $\eta\in \mathcal{H}^0(M,T^{1,0})^{\perp}$, we write
\begin{align*}
    b_{(\omega,\xi)}^{-1}\eta=\psi_1(\omega,\xi)b_{(\omega,\xi)}^{-1}\eta+\psi_2(\omega,\xi)b_{(\omega,\xi)}^{-1}\eta.  
\end{align*}
This implies
\begin{align*}
    \eta&=(\id-\Hproj)b_{(\omega,\xi)}\psi_1(\omega,\xi)b_{(\omega,\xi)}^{-1}\eta+\Hproj b_{(\omega,\xi)}\psi_1(\omega,\xi)b_{(\omega,\xi)}^{-1}\eta\\
    &\quad\quad +b_{(\omega,\xi)}\psi_2(\omega,\xi)b_{(\omega,\xi)}^{-1}\eta.
\end{align*}
Since the last two terms on the right-hand side are in $\mathcal{H}^0(M,T^{1,0})$ and $\eta\in \mathcal{H}^0(M,T^{1,0})^{\perp}$ it follows that
\begin{align*}
    \eta+\Hproj b_{(\omega,\xi)}\psi_1(\omega,\xi)b_{(\omega,\xi)}^{-1}\eta=b_{(\omega,\xi)}\psi_1(\omega,\xi)b_{(\omega,\xi)}^{-1}\eta
\end{align*}
and hence $\eta+\varphi(\omega,\xi)\eta\in b_{(\omega,\xi)}\left(\mathcal{H}^0(M,T^{1,0})^{\perp}\right)$. Therefore, we have
\begin{align*}
    \left\{\eta+\varphi(\omega,\xi)\eta:\eta\in \mathcal{H}^0(M,T^{1,0})^{\perp}\right\}\subset b_{(\omega,\xi)}\left(\mathcal{H}^0(M,T^{1,0})^{\perp}\right).
\end{align*}
Observing that the subspaces on both sides have the same (finite) codimension, we conclude that they are in fact equal.

In particular, this shows that $(\id-\Hproj)b_{(\omega,\xi)}\colon \mathcal{H}^0(M,T^{1,0})^{\perp}\rightarrow \mathcal{H}^0(M,T^{1,0})^{\perp}$ is surjective and therefore invertible. The family of inverses is given by
\begin{align*}
    p_{(\omega,\xi)}\colon \mathcal{H}^0(M,T^{1,0})^{\perp}&\longrightarrow \mathcal{H}^0(M,T^{1,0})^{\perp}\\
    \eta &\longmapsto b_{(\omega,\xi)}^{-1}(\eta+\varphi(\omega,\xi)\eta)
\end{align*}
which is smooth tame as a composition of smooth tame maps.
\end{proof}

\begin{lemma}
\label{lem: DA invertible, locally complete}
    For $\varepsilon>0$ small enough and large enough $k$, the equation $D_{\xi}A(\xi,\omega)\alpha=\beta$ has a unique solution $\alpha$ for every $(\xi,\omega)\in \tilde{\mathcal{U}}_{\varepsilon}^{(k)}\times {\mathcal{W}}_{\varepsilon}^{(k)}$, $\beta\in {\mathcal{H}^0(M,T^{1,0})}^{\perp}$. Moreover, the family of inverse maps is a smooth tame family of linear maps
\end{lemma}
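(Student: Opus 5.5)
We plan to write $D_{\xi}A(\xi,\omega)$ as a small perturbation of the invertible operator $-(\id-\Hproj)\circ b_{(\omega,\xi)}$ from the preceding lemma, and then show the perturbed family is still invertible with smooth tame inverses.

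\textbf{Step 1: factor out the principal part.} By the computation before the preceding lemma,
\begin{align*}
D_{\xi}A(\xi,\omega)\alpha=-(\id-\Hproj)b_{(\omega,\xi)}\alpha+K(\xi,\omega)\alpha,
\end{align*}
where
\begin{align*}
K(\xi,\omega)\alpha=\Q[\mathcal{E}(\omega,\xi),b_{(\omega,\xi)}\alpha]-\Q\iota_{\overline{c_{\xi}\alpha}}\mu(\omega,\xi),
\end{align*}
with $\mu(\omega,\xi):=\dbar\mathcal{E}(\omega,\xi)-\tfrac12[\mathcal{E}(\omega,\xi),\mathcal{E}(\omega,\xi)]$. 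Denote by $p_{(\omega,\xi)}$ the smooth tame family of inverses of $(\id-\Hproj)\circ b_{(\omega,\xi)}$ on ${\mathcal{H}^0(M,T^{1,0})}^{\perp}$. Substituting $\alpha=-p_{(\omega,\xi)}\gamma$, the equation $D_\xi A\,\alpha=\beta$ becomes
\begin{align*}
\gamma+\Phi(\xi,\omega)\gamma=\beta,\qquad \Phi(\xi,\omega):=-K(\xi,\omega)\,p_{(\omega,\xi)}.
\end{align*}
The operator $\Phi$ is a smooth tame family of linear maps on ${\mathcal{H}^0(M,T^{1,0})}^{\perp}$. This follows from Lemma~\ref{lem: properties b and c}, Examples~\ref{ex: bracket} and~\ref{ex: Q}, and the smooth tameness of $\mathcal{E}$. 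Its image lies in $\im\Q\subset\im\dbar^*$, and $\Phi$ vanishes when $\mathcal{E}(\omega,\xi)=0$, in particular at $(\xi,\omega)=(0,0)$.

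\textbf{Step 2: invert $\id+\Phi$.} We apply the same argument as in Proposition~\ref{prop: inverse family from elliptic}, now in degree $0$. Decompose $\gamma=\Hproj\gamma+\dbar^*\gamma_2$, and note that on ${\mathcal{H}^0}^{\perp}$ we have $\Hproj\gamma=0$, since $\dbar$-exact $0$-forms are zero. So every element of ${\mathcal{H}^0}^{\perp}=\im\dbar^*$ is already $\dbar^*$-exact. We then consider $\ell(\xi,\omega)\gamma:=\Delta\gamma+\Delta\Phi(\xi,\omega)\gamma$.

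We must check that $\Delta\Phi$ is a differential operator of order $\le 2$ with coefficients depending smooth tamely on $(\xi,\omega)$:
\begin{itemize}
\item $\Delta\Q=\dbar^*\dbar\dbar^*\G$ on $(0,1)$-forms reduces to $\dbar^*$ composed with $(\id-\Hproj)$, hence is of order one.
\item The bracket term is first order in $b\alpha$.
\item $b$, $c$ and $p$ are obtained via differentials of diffeomorphisms. Here one has to argue that $c_\xi$ is a zeroth order operator in $\eta$ up to lower-order corrections, which seems plausible since $c_\xi\eta=dR_{F_\xi^{-1}}$-type expressions.
\end{itemize}
The total order is then at most $2$, and for $\varepsilon$ small and $k$ large the leading symbol is that of $\Delta$ up to a $C^0$-small perturbation, so $\ell$ is elliptic. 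Theorem~\ref{thm: elliptic pdo}, with $i,j$ given by inclusion of and projection onto $\mathcal{H}^0$, then gives invertibility near $(0,0)$ with a smooth tame family of inverses, exactly as in Proposition~\ref{prop: inverse family from elliptic}.

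\textbf{Main obstacle and fallback.} If $p_{(\omega,\xi)}$ is not a differential operator (it contains the finite-rank correction $\varphi(\omega,\xi)$), the differential-operator route fails. In that case we treat $\Phi$ differently: split it into a differential-operator part plus a finite-rank smoothing part, and absorb the finite-rank part into $i,j$ by enlarging $V_1,V_2$.

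If this also fails, we would instead aim for the hypotheses of Proposition~\ref{prop: inverse family is smooth tame}. Using Lemma~\ref{lem: tame estimate family lin maps}, we would show the tame estimate
\begin{align*}
\Vert\Phi(\xi,\omega)\gamma\Vert_n\le C\big(\varepsilon\Vert\gamma\Vert_n+(\Vert\xi\Vert_{n+r}+\Vert\omega\Vert_{n+r})\Vert\gamma\Vert_{b}\big),
\end{align*}
which holds because $\Q$ gains one derivative, compensating the loss of one derivative in the bracket. A Neumann series for $\id+\Phi$ then yields a continuous tame inverse family, which Proposition~\ref{prop: inverse family is smooth tame} upgrades to a smooth tame one.

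The main obstacle is establishing this order bookkeeping: that $\Phi$ loses no derivatives and is small in each norm $n$ rather than only in the base norm. This is what forces $k$ large.
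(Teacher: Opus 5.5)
Your reduction is the same as the paper's. The paper sets $\tilde\alpha=(\id-\Hproj)b_{(\omega,\xi)}\alpha$, i.e.\ $\alpha=p_{(\omega,\xi)}\tilde\alpha$, and then invokes Proposition~\ref{prop: inverse family from elliptic} in one line. The gap is in how you verify that $\Delta\Phi$ is a differential operator of order $\le 2$. Your bullet on $c_\xi$ is false: $c_\xi$ is not a differential operator, not even to leading order. Differentiating $C_\xi$ using $DR_{F_\xi^{-1}}X=X\circ F_\xi^{-1}$ and $D_0\exp^h=\id$ gives $(c_\xi\eta)(x)=\Psi^{-1}\bigl(d(\exp^h_y)_{\Psi\xi(y)}\Psi\eta(y)\bigr)$ with $y=F_\xi^{-1}(x)$. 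So $\eta$ is evaluated at a shifted base point, which is exactly the non-locality noted in the paper's remark on $\mathcal{E}$. The same holds for $b_{(\omega,\xi)}=a_{\mathcal{E}(\omega,\xi)}c_\xi$ and for $b^{-1}_{(\omega,\xi)}$ inside $p_{(\omega,\xi)}$. Tracking the orders of the individual factors of $K\,p$ therefore cannot work.

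The missing observation is that these factors cancel. Write $\mathcal{E}=\mathcal{E}(\omega,\xi)$, and let $\varphi(\omega,\xi)$ be the finite-rank map into $\mathcal{H}^0(M,T^{1,0})$ from the preceding lemma. Since $p_{(\omega,\xi)}=b^{-1}_{(\omega,\xi)}\bigl(\id+\varphi(\omega,\xi)\bigr)$, you have $b_{(\omega,\xi)}p_{(\omega,\xi)}=\id+\varphi(\omega,\xi)$ and $c_\xi p_{(\omega,\xi)}=a^{-1}_{\mathcal{E}}\bigl(\id+\varphi(\omega,\xi)\bigr)$. Combined with $\Delta\Q=\dbar^*\Delta\G=\dbar^*(\id-\Hproj)=\dbar^*$, this splits your operator as $\Phi=\Phi_0+N\varphi$, where
\[
\Delta\Phi_0(\xi,\omega)\gamma=-\dbar^*[\mathcal{E},\gamma]+\dbar^*\iota_{\overline{a^{-1}_{\mathcal{E}}\gamma}}\mu .
\]
This is a second-order operator with principal symbol $O(\Vert\mathcal{E}\Vert_{0,\infty})$, and $\Phi_0$ takes values in $\im\Q$ and vanishes at $(0,0)$. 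The remaining piece is $N(\xi,\omega)x=-\Q[\mathcal{E},x]+\Q\iota_{\overline{a^{-1}_{\mathcal{E}}x}}\mu$ for $x\in\mathcal{H}^0(M,T^{1,0})$, with $N(0,0)=0$. So $\Phi_0$ satisfies the hypotheses of Proposition~\ref{prop: inverse family from elliptic}, which resolves what you call the main obstacle.

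You are right that the finite-rank term $N\varphi$ is not a differential operator; the paper's one-line proof passes over it. However, it cannot simply be absorbed into $i,j$, because those are fixed in Theorem~\ref{thm: elliptic pdo} while $N$ and $\varphi$ vary with the parameters. Instead, take $L_0:=(\id+\Phi_0)^{-1}$ from Proposition~\ref{prop: inverse family from elliptic} and use
\[
(\id+\Phi_0+N\varphi)^{-1}=L_0-L_0N\bigl(\id_{\mathcal{H}^0}+\varphi L_0N\bigr)^{-1}\varphi L_0 .
\]
Here $\id_{\mathcal{H}^0}+\varphi L_0N$ is an endomorphism of the finite-dimensional space $\mathcal{H}^0(M,T^{1,0})$ equal to the identity at $(0,0)$, so it stays invertible nearby.

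Finally, your Neumann-series fallback fails as stated. The constants $C(n)$ grow with $n$, so no fixed $\varepsilon$ makes $\Phi$ a contraction in every $\Vert\cdot\Vert_n$. Choosing $k$ large does not help either, since it controls only finitely many norms. Control of the higher norms has to come from ellipticity, which is why the elliptic theorem is needed.
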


\begin{proof}

Setting $\tilde{\alpha}:=(\id-\Hproj)b_{(\omega,\xi)}\alpha\in\mathcal{H}^0(M,T^{1,0})^{\perp}$, we observe that finding a solution to the equation $D_{\xi}A(\xi,\omega)\alpha=\beta$ is equivalent to finding a solution of to the equation
\begin{align*}
  \tilde{\alpha}-\Q[\mathcal{E}(\omega,\xi),b_{(\omega,\xi)}p_{(\omega,\xi)}\tilde{\alpha}]+\Q \iota_{\overline{c_{\xi}p_{(\omega,\xi)}\tilde{\alpha}}}\left(\dbar\mathcal{E}(\omega,\xi)-\tfrac{1}{2}[\mathcal{E}(\omega,\xi),\mathcal{E}(\omega,\xi)]\right)=\beta.
\end{align*}
The statement follows then from Proposition~\ref{prop: inverse family from elliptic}
\end{proof}

\begin{proof}[Proof of Proposition~\ref{prop: locally complete}]
By Lemma~\ref{lem: DA invertible, locally complete}, the assumptions of Theorem~\ref{thm: impl fn thm} hold. Since $A(0,0)=0$, we conclude that there is a neighborhood $\mathcal{W}$ of $0$ in $\A^{0,1}(M,T^{1,0})$ and a neighborhood $\mathcal{U}$ of $0$ in ${\mathcal{H}^0(M,T^{1,0})}^{\perp}$ such that for every $\omega\in\mathcal{W}$ we can find some $\xi=\xi(\omega)\in\mathcal{U}$ such that $\dbar^*\mathcal{E}(\omega,\xi)=0$. Since the derivative $D_{\xi}A(\xi,\omega)$ is even invertible, it follows furthermore from Theorem~\ref{thm: nash moser} that the solution $\xi(\omega)$ is (locally) unique (see also \cite[Thm A.12]{Donaldson2025}). Moreover, the solution $\xi(\omega)$ is defined by a smooth tame map. This finishes the proof of Proposition~\ref{prop: locally complete}.
\end{proof}

\begin{proof}[Proof of Theorem~\ref{thm: deformation theorem}]
By Corollary~\ref{cor: hol family}, $\{ \varphi(t): t\in \mathcal{W}_{\varepsilon}\}$ is a smooth family of almost complex deformations. The integrable deformations are precisely those for which the analytic map $\Phi(\alpha):=\Hproj[\alpha,\alpha]$ vanishes. They are contained in a neighborhood of 0 in $\tilde{\mathcal{V}}_1$. By Proposition~\ref{prop: locally complete}, we know that any sufficiently small complex deformation is equivalent to a complex deformation in a neighborhood of 0 of $\tilde{\mathcal{V}}_1$. 

Furthermore, if $\Phi\equiv 0$, $\{ \varphi(t): t\in \mathcal{W}_{\varepsilon}\}$ is a family of complex deformations. Given any other family of complex deformations $\{\varphi'(t'):t'\in \mathcal{W}'
\}$, it follows again from Proposition~\ref{prop: locally complete} that (possibly after shrinking $\mathcal{W}'$) there is a family of diffeomorphisms $\{f_{t'}\}_{t'\in \mathcal{W}'}$, depending smoothly on $t'$, and a smooth map $\tau\colon \mathcal{W}'\rightarrow \mathcal{W}_{\varepsilon}$ such that for every $t'\in \mathcal{W}'$, we have
\begin{align*}
    f_{t'}\cdot\varphi'(t') =\varphi(\tau(t')).
\end{align*}
Hence, the family is locally complete.
\end{proof}

\singlespacing\small
\bibliographystyle{plain}
\bibliography{references}

@article{hamilton82,
  title = {The inverse function theorem of Nash and Moser},
  volume = {7},
  ISSN = {0273-0979},
  url = {http://dx.doi.org/10.1090/S0273-0979-1982-15004-2},
  DOI = {10.1090/s0273-0979-1982-15004-2},
  number = {1},
  journal = {Bulletin of the American Mathematical Society},
  publisher = {American Mathematical Society (AMS)},
  author = {Hamilton,  R. S.},
  year = {1982},
  pages = {65–222}
}

@article{Kuranishi1962,
  title = {On the Locally Complete Families of Complex Analytic Structures},
  volume = {75},
  ISSN = {0003-486X},
  url = {http://dx.doi.org/10.2307/1970211},
  DOI = {10.2307/1970211},
  number = {3},
  journal = {The Annals of Mathematics},
  publisher = {JSTOR},
  author = {Kuranishi,  M.},
  year = {1962},
  month = May,
  pages = {536}
}

@incollection {kuranishi64,
    AUTHOR = {Kuranishi, M.},
     TITLE = {New proof for the existence of locally complete families of
              complex structures},
 BOOKTITLE = {Proceedings of the Conference on Complex Analysis. ({M}inneapolis, 1964)},
     PAGES = {142--154},
 PUBLISHER = {Springer, Berlin-Heidelberg-New York},
      YEAR = {1965},
   MRCLASS = {32.47 (57.70)},
  MRNUMBER = {176496},
MRREVIEWER = {Phillip\ Griffiths},
}

@article{Frohlicher1957,
  title = {A THEOREM ON STABILITY OF COMPLEX STRUCTURES},
  volume = {43},
  ISSN = {1091-6490},
  url = {http://dx.doi.org/10.1073/pnas.43.2.239},
  DOI = {10.1073/pnas.43.2.239},
  number = {2},
  journal = {Proceedings of the National Academy of Sciences},
  publisher = {Proceedings of the National Academy of Sciences},
  author = {Fr\"{o}licher,  A. and Nijenhuis,  A.},
  year = {1957},
  month = Feb,
  pages = {239–241}
}

@article{kodaira1958completeness,
  title = {A theorem of completeness for complex analytic fibre spaces},
  volume = {100},
  ISSN = {0001-5962},
  url = {http://dx.doi.org/10.1007/BF02559541},
  DOI = {10.1007/bf02559541},
  number = {3-4},
  journal = {Acta Mathematica},
  publisher = {International Press of Boston},
  author = {Kodaira,  K. and Spencer,  D. C.},
  year = {1958},
  pages = {281–294}
}

@article{kodaira1958existence,
  title = {On the Existence of Deformations of Complex Analytic Structures},
  volume = {68},
  ISSN = {0003-486X},
  url = {http://dx.doi.org/10.2307/1970256},
  DOI = {10.2307/1970256},
  number = {2},
  journal = {The Annals of Mathematics},
  publisher = {JSTOR},
  author = {Kodaira,  K. and Nirenberg,  L. and Spencer,  D. C.},
  year = {1958},
  month = {Sept},
  pages = {450}
}

@article {KSdefIandII,
    AUTHOR = {Kodaira, K. and Spencer, D. C.},
     TITLE = {On deformations of complex analytic structures. {I}, {II}},
   JOURNAL = {Annals of Mathematics. Second Series},
    VOLUME = {67},
      YEAR = {1958},
     PAGES = {328--466},
      ISSN = {0003-486X},
   MRCLASS = {57.00},
  MRNUMBER = {112154},
MRREVIEWER = {M.\ F.\ Atiyah},
       DOI = {10.2307/1970009},
       URL = {https://doi.org/10.2307/1970009},
}

@article {KSdefIII,
    AUTHOR = {Kodaira, K. and Spencer, D. C. },
     TITLE = {On deformations of complex analytic structures. {III}.
              {S}tability theorems for complex structures},
  JOURNAL = {Annals of Mathematics. Second Series},
    VOLUME = {71},
      YEAR = {1960},
     PAGES = {43--76},
      ISSN = {0003-486X},
   MRCLASS = {57.00},
  MRNUMBER = {115189},
MRREVIEWER = {M.\ F.\ Atiyah},
       DOI = {10.2307/1969879},
       URL = {https://doi.org/10.2307/1969879},
}

@article{hamilton1977deformation,
  title={Deformation of complex structures on manifolds with boundary. {I}. The stable case},
  author={Hamilton, R. S. },
  journal={Journal of Differential geometry},
  volume={12},
  number={1},
  pages={1--45},
  year={1977},
  publisher={Lehigh University}
}

@BOOK{Wells2007-er,
  title     = "Differential analysis on complex manifolds",
  author    = "Wells, R. O. ",
  publisher = "Springer",
  series    = "Graduate Texts in Mathematics",
  edition   =  3,
  month     =  oct,
  year      =  2007,
  address   = "New York, NY",
  copyright = "https://www.springernature.com/gp/researchers/text-and-data-mining",
  language  = "en"
}

@phdthesis{Gualtieri:2003dx,
    author = "Gualtieri, M.",
    title = "{Generalized complex geometry}",
    eprint = "math/0401221",
    archivePrefix = "arXiv",
    school = "Oxford U.",
    year = "2003"
}

@article{Gualtieri2011,
  title = {Generalized complex geometry},
  volume = {174},
  ISSN = {0003-486X},
  url = {http://dx.doi.org/10.4007/annals.2011.174.1.3},
  DOI = {10.4007/annals.2011.174.1.3},
  number = {1},
  journal = {Annals of Mathematics},
  publisher = {Annals of Mathematics},
  author = {Gualtieri,  M.},
  year = {2011},
  month = {July},
  pages = {75–123}
}

@article{Donaldson2025,
  title = {{C}alabi-{Y}au threefolds with boundary},
  volume = {21},
  ISSN = {1558-8602},
  url = {http://dx.doi.org/10.4310/pamq.250116012309},
  DOI = {10.4310/pamq.250116012309},
  number = {3},
  journal = {Pure and Applied Mathematics Quarterly},
  publisher = {International Press of Boston},
  author = {Donaldson,  S. and Lehmann,  F.},
  year = {2025},
  pages = {1119–1170}
}

@book{Omori2016,
  title = {Infinite-Dimensional Lie Groups},
  ISBN = {9781470445737},
  ISSN = {2472-5137},
  url = {http://dx.doi.org/10.1090/mmono/158},
  DOI = {10.1090/mmono/158},
  journal = {Translations of Mathematical Monographs},
  publisher = {American Mathematical Society},
  author = {Omori,  H.},
  year = {2016},
  month = {Apr} 
}

@book {arnold,
    AUTHOR = {Arnold, V. I. and Gusein-Zade, S. M. and Varchenko, A. N.},
     TITLE = {Singularities of differentiable maps. {V}olume 1},
    SERIES = {Modern Birkh\"auser Classics},
 PUBLISHER = {Birkh\"auser/Springer, New York},
      YEAR = {2012},
     PAGES = {xii+382},
      ISBN = {978-0-8176-8339-9},
   MRCLASS = {58Kxx},
  MRNUMBER = {2896292},
}

\end{document}